\DeclareMathOperator{\gr}{gr} 
 \DeclareMathOperator{\Aut}{Aut}
\DeclareMathOperator{\Hom}{Hom} 
\DeclareMathOperator{\End}{End}
\begin{document}
\theoremstyle{plain}
\newtheorem*{example}{Example}
\newtheorem*{cor}{Corollary}
\newtheorem*{conj}{Conjecture}
\newtheorem*{prop}{Proposition}
\newtheorem*{hyp}{Hypothesis}
\newtheorem*{thrm}{Theorem}
\newtheorem*{lem}{Lemma}
\theoremstyle{remark}
\newtheorem{defn}{Definition}
\newtheorem*{rem}{Remark}
\newtheorem*{rems}{Remarks}
\newtheorem{quest}{Question}

\title{Symplectic reflection algebras in positive characteristic}

\author{K.A. Brown and K. Changtong}

\address{Brown: Department of Mathematics,
University of Glasgow, Glasgow G12 8QW, UK}

\email{kab@maths.gla.ac.uk}

\address{Changtong:  Department of Mathematics,
Faculty of Science, Ubon Ratchathani University, Ubon Ratchathani, 34190, Thailand.}
\email{noikanok@gmail.com}

\begin{abstract} Basic properties of symplectic reflection
algebras over an algebraically closed field $k$ of positive
characteristic are laid out. These algebras are always finite
modules over their centres, in contrast to the situation in
characteristic 0. For the subclass of rational Cherednik algebras,
we determine the PI-degree and the Goldie rank, and show that the
Azumaya and smooth loci of the centre coincide.
\end{abstract}

\subjclass[2000]{}

\keywords{symplectic reflection algebra; rational Cherednik algebra;
fields of positive characteristic}

\thanks{Part of the work described here was obtained in the PhD
thesis of the second author, for which she was awarded a PhD by the
University of Glasgow in 2006. Her PhD research was supported by a
grant from the Thai Royal Government. We thank Iain Gordon and
Catharina Stroppel for helpful comments. }

\maketitle

\section{Introduction} \label{Intro}
\subsection{} \label{start} Throughout this paper, $k$ will be an
algebraically closed field of characteristic $p$, where $p$ is an
odd prime. Symplectic reflection algebras over $\mathbb{C}$ were
introduced in \cite{EG}. The same definition makes sense over $k$,
and indeed symplectic reflection algebras over $k$ have been studied
in \cite{BFG} and \cite{L}. Let $(V, \omega)$ be a finite
dimensional symplectic vector space over $k$ and $S(V)$ its
symmetric algebra, and let $\Gamma$ be a finite subgroup of
$\mathrm{Sp}(V)$ with $\mathrm{char}k \nmid |\Gamma|.$ The
symplectic reflection algebra $H = H_{t,
\mathbf{c}}(V,\omega,\Gamma)$ is a deformation of the skew group
algebra $S(V) \ast \Gamma$; the precise definition is in
(\ref{def}). One can limit the study at once to the case where
$\Gamma$ is generated by its set $S$ of symplectic reflections: that
is, $s \in S$ if and only if $\mathrm{dim}_k\mathrm{im}(\mathrm{Id}
- s) = 2.$ In the definition, $t \in k$ and $\mathbf{c}:S
\longrightarrow k$ is a $\Gamma-$invariant function; $S(V) \ast
\Gamma$ [resp. $\mathcal{D}(V) \ast \Gamma$] corresponds to the case
where $\mathbf{c}$ is the zero map and $t=0$ [resp. $t=1$].

\subsection{} \label{split} Recall that Weyl algebras in positive
characteristic are finite modules over their centres; see Lemma
\ref{weyl}. In a parallel fashion, all symplectic reflection
algebras over $k$ are finite modules over their centres by a result
of Etingof \cite[Appendix 10]{BFG}. In contrast, over $\mathbb{C}$,
when $t \neq 0,$ $Z(H_{t,\mathbf{c}}) = \mathbb{C}$,
\cite[Proposition 7.2(2)]{BG}. When $t=0$ the theory over $k$
appears to be essentially the same as over $\mathbb{C}.$ Thus the
focus here will be on the case where $t$ is non-zero; in fact, after
re-scaling, we may then assume that $t=1$.

\subsection{} \label{results} In $\S$\ref{back} minor adjustments
to the characteristic 0 arguments suffice to show that $H =
H_{1,\mathbf{c}}$ is a prime noetherian $k-$algebra with excellent
homological properties - namely, it is Auslander-regular and
Cohen-Macaulay.

Let $e \in k\Gamma \subseteq H$ be the symmetrising idempotent
$\frac{1}{|\Gamma|}\sum_{\gamma \in \Gamma}\gamma$. We show in
$\S$\ref{sphere} that the symmetrising algebra $eHe$ is a
noetherian domain, a maximal order with - again - good homological
properties. Just as in characteristic 0, the Satake homomorphism
defines an isomorphism between $Z(H)$ and $Z(eHe)$. The algebras
$H$ and $eHe$ are connected by a Morita context, whose details are
laid out in (\ref{Morita}) and (\ref{return}).

\subsection{}\label{Cher} In $\S\S$\ref{Cherone} and \ref{Cherrep}
we specialise to the case where $H$ is a rational Cherednik algebra,
and prove our main results. That is, $\Gamma$ acts on a vector space
$\mathfrak{h}$ of dimension $n$, with $\Gamma$ generated by
pseudo-reflections for this action, and $V = \mathfrak{h} \oplus
\mathfrak{h}^*$ with the canonical $\Gamma-$invariant symplectic
form, denoted by $\omega$.

\begin{thrm} Let $k, \,\mathfrak{h}, \, n, \, \Gamma, \,\omega$ and $H$ be as
above. Denote the centre of $H$ by $Z(H)$.
\begin{enumerate}
\item  $H$ is a free module of rank
$p^{2n}|\Gamma|^3$ over the central subalgebra
$$ Z_0 := (S(\mathfrak{h})^{\operatorname{p}})^{\Gamma} \otimes
(S(\mathfrak{h}^*)^{\operatorname{p}})^{\Gamma},$$ where
$(-)^{\operatorname{p}}$ denotes the Frobenius map. \item The Goldie
rank of $H$ is $|\Gamma|$. \item The PI-degree of $H$ is $
p^n|\Gamma|. $ \item Every irreducible $H$-module of maximal
$k$-dimension $p^n|\Gamma|$ is a regular $k\Gamma$-module of rank
$p^n$. \item The Azumaya locus of $H$ is equal to the smooth locus
of $\mathrm{Maxspec}(Z(H))$: that is, for a maximal ideal
$\mathfrak{m}$ of $Z(H)$, $\mathfrak{m}$ annihilates a simple
$H-$module of the maximal possible $k-$dimension $p^n|\Gamma|$ if
and only if $Z(H)_{\mathfrak{m}}$ has finite global dimension.
\end{enumerate}
\end{thrm}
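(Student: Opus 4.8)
The plan for (i) is to combine the PBW theorem with classical invariant theory. The PBW theorem for rational Cherednik algebras holds over $k$ because $p \nmid |\Gamma|$, giving a $k$-linear bijection $S(\mathfrak{h}) \otimes_k k\Gamma \otimes_k S(\mathfrak{h}^*) \to H$, $a \otimes \gamma \otimes b \mapsto a\gamma b$, which is left $S(\mathfrak{h})$-linear and right $S(\mathfrak{h}^*)$-linear. One first checks, as in Etingof's appendix to \cite{BFG}, that $(S(\mathfrak{h})^{\operatorname{p}})^{\Gamma}$ and $(S(\mathfrak{h}^*)^{\operatorname{p}})^{\Gamma}$ are central in $H$ (note that $f^{\operatorname{p}}$ is $\Gamma$-fixed precisely when $f$ is), so that $Z_0$ is a central subalgebra. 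By Chevalley-Shephard-Todd (valid here since $p \nmid |\Gamma|$), $S(\mathfrak{h})$ is free of rank $|\Gamma|$ over $S(\mathfrak{h})^{\Gamma} = k[f_1,\dots,f_n]$, and the latter is free of rank $p^n$ over $k[f_1^{\operatorname{p}},\dots,f_n^{\operatorname{p}}] = (S(\mathfrak{h})^{\Gamma})^{\operatorname{p}}$; thus $S(\mathfrak{h})$ is free of rank $p^n|\Gamma|$ over the first tensor factor of $Z_0$, and symmetrically $S(\mathfrak{h}^*)$ over the second. Since $Z_0$ is central its left and right actions on $H$ agree, so through the PBW bijection $H$ is $Z_0$-free of rank $(p^n|\Gamma|)\cdot|\Gamma|\cdot(p^n|\Gamma|) = p^{2n}|\Gamma|^3$.

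For (ii) and (iii) I would pass to the localisation at the discriminant $\delta = \prod_H \alpha_H \in S(\mathfrak{h}^*)$: exactly as over $\mathbb{C}$, $\delta$ generates an Ore set and $H[\delta^{-1}] \cong \mathcal{D}(\mathfrak{h}_{\mathrm{reg}}) \ast \Gamma$, so that $H$, $\mathcal{D}(\mathfrak{h}) \ast \Gamma$ and $\mathcal{D}(\mathfrak{h}_{\mathrm{reg}}) \ast \Gamma$ share a single simple Artinian quotient ring $Q$. Now $\mathcal{D}(\mathfrak{h}) = A_n(k)$ is Azumaya of rank $p^n$ over $Z(A_n) = k[x_i^{\operatorname{p}}, \partial_i^{\operatorname{p}}]$ (cf. Lemma \ref{weyl}), so $Q(A_n) =: D$ is a division ring with $[D : \operatorname{Frac} Z(A_n)] = p^{2n}$, and $\Gamma$ acts on $D$ faithfully and $X$-outerly — it already acts faithfully on the central subfield $\operatorname{Frac} Z(A_n)$, on which inner automorphisms are trivial. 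By the Galois theory of skew fields, $[D : D^{\Gamma}] = |\Gamma|$ and $Q = D \ast \Gamma \cong \End_{D^{\Gamma}}(D) \cong M_{|\Gamma|}(D^{\Gamma})$; hence the Goldie rank of $H$ is $|\Gamma|$. For the PI-degree, $Z(Q) = (\operatorname{Frac} Z(A_n))^{\Gamma}$, and counting dimensions over $Z(Q)$ gives $|\Gamma|^2 [D^{\Gamma} : Z(D^{\Gamma})] = \dim_{Z(Q)} Q = |\Gamma| \cdot \dim_{\operatorname{Frac} Z(A_n)} Q = |\Gamma|^2 p^{2n}$, so $[D^{\Gamma} : Z(D^{\Gamma})] = p^{2n}$ and the PI-degree of $H$ is $|\Gamma| p^n$.

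For (iv) I would argue by connectedness of the Azumaya locus. If $M$ is a simple $H$-module with $\dim_k M = p^n|\Gamma|$, then $\mathfrak{m} := \operatorname{ann}_{Z(H)} M$ is a maximal ideal with $Z(H)/\mathfrak{m} = k$, it lies in the Azumaya locus $\mathcal{A}$ of $H$ (the dimension is maximal), $H/\mathfrak{m}H \cong M_{p^n|\Gamma|}(k)$, and $M$ is its unique simple module. Since $H$ is prime, $Z(H)$ is a domain, so $\mathrm{Maxspec}\, Z(H)$ is irreducible and the dense open set $\mathcal{A}$ is connected; over $\mathcal{A}$ the maximal-dimensional simple modules form the tautological module of the Azumaya algebra $H_{\mathcal{A}}$, which carries a compatible $\Gamma$-action. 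As $k\Gamma$ is semisimple ($p \nmid |\Gamma|$), the multiplicities of the isotypic components of the restriction to $k\Gamma$ are locally constant, hence constant on $\mathcal{A}$. At a point of $\mathcal{A}$ coming from $\mathcal{D}(\mathfrak{h}_{\mathrm{reg}}) \ast \Gamma$, where $\Gamma$ acts freely, the simple module is induced from a $p^n$-dimensional $\mathcal{D}(\mathfrak{h})$-module at one point of the orbit, so it is $\cong \operatorname{Ind}_{\{1\}}^{\Gamma}(k^{p^n}) \cong (k\Gamma)^{\oplus p^n}$; hence the multiplicities are those of $p^n$ copies of the regular representation and $M \cong (k\Gamma)^{\oplus p^n}$.

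For (v), the inclusion of $\mathcal{A}$ into the smooth locus is formal: if $\mathfrak{m} \in \mathcal{A}$ then $H_{\mathfrak{m}}$ is Azumaya over $Z(H)_{\mathfrak{m}}$, hence Morita equivalent to it, so $\operatorname{gldim} Z(H)_{\mathfrak{m}} = \operatorname{gldim} H_{\mathfrak{m}} \le \operatorname{gldim} H < \infty$ by Auslander-regularity ($\S$\ref{back}), and $Z(H)_{\mathfrak{m}}$ is regular. The reverse inclusion is the main obstacle. The plan: from the homological properties of $eHe$ ($\S$\ref{sphere}) and the Satake isomorphism $Z(H) \cong Z(eHe)$, deduce that $Z(H)$ is Cohen-Macaulay; since $H$ is Cohen-Macaulay and module-finite over $Z(H)$, it is then a maximal Cohen-Macaulay $Z(H)$-module, so at a smooth $\mathfrak{m}$ the Auslander-Buchsbaum formula over the regular local ring $Z(H)_{\mathfrak{m}}$ forces $H_{\mathfrak{m}}$ to be $Z(H)_{\mathfrak{m}}$-free. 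Choose a regular system of parameters $\pi_1, \dots, \pi_{2n}$ of $Z(H)_{\mathfrak{m}}$: it is a central regular sequence in the radical of $H_{\mathfrak{m}}$, and $\bar{H} := H_{\mathfrak{m}}/(\pi_1,\dots,\pi_{2n})H_{\mathfrak{m}} = H/\mathfrak{m}H$ has $\dim_k \bar{H} = \operatorname{rank}_{Z(H)_{\mathfrak{m}}} H_{\mathfrak{m}} = \dim_{\operatorname{Frac} Z(H)} Q = (p^n|\Gamma|)^2$. For a simple $\bar{H}$-module $S$, Cohen-Macaulayness and finite global dimension of $H$ give $\pd_{H_{\mathfrak{m}}} S = 2n$, and the change-of-rings theorem for a central regular sequence in the radical yields $\pd_{\bar{H}} S = 2n - 2n = 0$; thus $\bar{H}$ is semisimple. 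Since every simple $H$-module has dimension $\le p^n|\Gamma|$ while $\dim_k \bar{H} = (p^n|\Gamma|)^2$, necessarily $\bar{H} \cong M_{p^n|\Gamma|}(k)$, so $\mathfrak{m} \in \mathcal{A}$. The real work here is assembling the homological inputs — the Cohen-Macaulayness of $Z(H)$ and the change-of-rings step — and verifying that they survive passage to the relevant localisations.
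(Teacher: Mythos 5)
Parts (i)--(iii) of your proposal are essentially sound: (i) matches the paper's route (centrality of $Z_0$ plus Shephard--Todd--Chevalley and PBW), except that you outsource the centrality of $Z_0$ to Etingof/BFG where the paper gives its own elementary graded argument; and for (ii)--(iii) your route through $Q(H)\cong D\ast\Gamma\cong M_{|\Gamma|}(D^{\Gamma})$ with $D=Q(\mathcal{D}(\mathfrak{h}))$ and the outer-action Galois theory of division rings is a legitimate alternative to the paper's use of Moody's theorem plus a dimension count over $Q(Z)$. In (iv) there is a real (though repairable) flaw: an Azumaya algebra over $\mathcal{A}$ need not admit a ``tautological module'' -- that is exactly the Brauer obstruction -- so you cannot speak of a global bundle of simple modules with a $\Gamma$-action and read off local constancy from it. The paper's mechanism is the correct substitute: decompose $He$ (a finitely generated $Z$-module with commuting $\Gamma$-action) into $k\Gamma$-isotypic components, observe these are $Z$-direct summands, hence free over $Z_{\mathfrak m}$ at Azumaya points, so their fibre dimensions equal the generic ranks, which are computed via the Dunkl embedding and the normal basis theorem.

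The serious gap is in (v), in the direction smooth $\Rightarrow$ Azumaya. The change-of-rings step runs the wrong way: Rees's theorem says that if $x$ is a central regular element in the radical and $\pd_{A/xA}(M)<\infty$ then $\pd_A(M)=\pd_{A/xA}(M)+1$; one cannot conclude $\pd_{\bar H}(S)=\pd_{H_{\mathfrak m}}(S)-2n$ from finiteness of $\pd_{H_{\mathfrak m}}(S)$. Indeed the implication you are trying to prove (``prime, Auslander-regular, GK--Cohen--Macaulay, module-finite over a centre which is regular at $\mathfrak m$ $\Rightarrow$ $H/\mathfrak m H$ semisimple of a single block'') is false in that generality: a non-maximal hereditary order such as $\left(\begin{smallmatrix} k[x] & k[x]\\ xk[x] & k[x]\end{smallmatrix}\right)$, or the maximal order $k_{-1}[u,v]$ with centre $k[u^2,v^2]$, has smooth points of the centre that are not Azumaya, and all your homological hypotheses hold there. (Even granting semisimplicity, $\dim_k\bar H=d^2$ with all blocks of size $\le d$ does not force a single block: $3^2+4^2=5^2$.) What is missing is precisely the paper's key input: one must first show $H$ is Azumaya in codimension one -- for a height-one prime $\mathfrak p$ of $Z$, use the polynomial subalgebra $Z_0$ to see that $\mathfrak p$ misses $\delta$ (for $\mathfrak h$ or for $\mathfrak h^*$), pick $\mathfrak m\supseteq\mathfrak p$ avoiding $\delta$, and prove any simple $W$ killed by $\mathfrak m$ has $\dim_k W=p^n|\Gamma|$ by the two divisibilities $p^n\mid\dim_k W$ (from the $\mathcal{D}(\mathfrak h^{\mathrm{reg}})$-action) and $|\Gamma|\mid\dim_k W$ (from the $S(\mathfrak h^*)[\delta^{-1}]\ast\Gamma$-action and freeness of the $\Gamma$-orbit) -- and only then invoke the Brown--Goodearl theorem \cite[Theorem 3.8]{BGood} to get equality of the Azumaya and smooth loci. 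Finally, in the easy direction your phrase ``Azumaya, hence Morita equivalent to $Z_{\mathfrak m}$'' is also inaccurate (again the Brauer class); what is true and suffices is that an Azumaya algebra and its centre have the same global dimension, by separability and projectivity over the centre.
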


\subsection{} In the last section, $\S$\ref{qn}, we gather
together some questions and conjectures arising from this work.

\section{Definitions and background}\label{back}
\subsection{} \label{def} Recall the hypothesis on $k$ from (\ref{start}.
When we want to emphasise that a particular result is valid over all
algebraically closed fields $K$ such that \begin{equation} 2|\Gamma|
\textit{ is a unit in } K \label{field}\end{equation},we will always
denote the ground field by $K$. All fields will be assumed to
satisfy (\ref{field}). The ingredients needed for the construction
of a symplectic reflection algebra are a finite dimensional
symplectic $K-$vector space $(V,\omega)$ and a finite group $\Gamma$
of symplectic automorphisms of $V.$ An element $\gamma \in \Gamma$
is called a \emph{symplectic reflection} (on $V$) if the rank of
$\mathrm{Id}- \gamma$ is 2. Let $S$ denote the set of symplectic
reflections in $\Gamma$ and let $\mathrm{\bf{c}} : S \longrightarrow
K : s \mapsto c_s$ be a $\Gamma-$invariant map. Let $t \in K.$ The
\emph{symplectic reflection algebra} $H_{t,\mathrm{\bf{c}}}$
\cite[page 245]{EG} is the deformation of the skew group algebra
$S(V) \ast \Gamma$ obtained by replacing the commutativity relations
$xy - yx = 0$ defining the polynomial algebra $S(V)$ by new
relations
\begin{eqnarray} xy - yx = t \omega (x,y)1_{\Gamma} - \sum_{s \in
S}c_s \omega_s (x,y)s, \label{tight} \end{eqnarray} for all $x,y \in
V$. Here, $w_s$ is the skew-symmetric form on $V$ which has
$\mathrm{ker} (\mathrm{Id}- s)$ in its radical, and coincides with
$\omega$ on $\mathrm{im}(\mathrm{Id} - s).$ Thus, writing $T(V)$ for
the tensor algebra of $V$, $H_{t,\mathrm{\bf{c}}}$ is the factor of
the skew group algebra $T(V)\ast \Gamma$ by the ideal generated by
the elements (\ref{tight}). Throughout this paper we will denote the
dimension of $V$ by $2n$, and we'll assume for convenience that
$\Gamma = \langle S \rangle$.

\subsection{PBW theorem and homological consequences} \label{prop} It's clear from the
definition that if $H_{t,\mathrm{\bf{c}}} =
H_{t,\mathrm{\bf{c}}}(V,\omega,\Gamma)$ is a symplectic reflection
algebra, then $H_{t,\mathrm{\bf{c}}}$ is an $\mathbb{N}-$filtered
algebra with filtration $\mathcal{F}_0 := K\Gamma$, $\mathcal{F}_1
:= K\Gamma + K\Gamma V,$ and $\mathcal{F}_i := \mathcal{F}_1^i$ for
$i \geq 1.$ It's also immediate from the relations (\ref{tight})
that there is an algebra epimorphism $$\pi : S(V) \ast \Gamma
\longrightarrow \mathrm{gr}_{\mathcal{F}}(H_{t,\mathrm{\bf{c}}});$$
the starting point for the study of symplectic reflection algebras
is \cite[Theorem 1.3]{EG}, the
``Poincar$\acute{\mathrm{e}}$-Birkhoff-Witt theorem'', which asserts
that \begin{eqnarray}\pi \textit{ is an isomorphism.}\label{PBW}
\end{eqnarray}

Equivalently, the identity map on $V$ and $\Gamma$ extends to a
vector space isomorphism between $H_{t,\mathrm{\bf{c}}}$ and
$S(V)\otimes \Gamma.$ Some of the consequences flowing from this
fact are gathered in the theorem below, for which we need to recall
a few definitions. A noetherian algebra $A$ is
\emph{Auslander-Gorenstein} if $A$ has finite left injective
dimension $d$, and for every finitely generated left $A-$module $M$,
every non-negative integer $i$ and every submodule $N$ of
$\mathrm{Ext}_A^i(M,A)$, we have $\mathrm{Ext}_A^j(N,A) = 0$ for all
$j < i;$ and the same conditions hold with left replaced by right
throughout. By a theorem of Zaks \cite{Za}, if a noetherian algebra
has finite right and left injective dimensions, then these are
equal. If $A$ is Auslander-Gorenstein and has finite global
dimension, then $A$ is called \emph{Auslander-regular}, (and then
necessarily $\mathrm{gldim}(A) = d$). The \emph{grade} $j_A(M) \in
\mathbb{N} \cup \{+\infty \}$ of a non-zero finitely generated
$A-$module $M$ is the least integer $j$ such that $\mathrm{Ext}_A^j
(M,A)$ is non-zero. If $A$ has finite Gel'fand-Kirillov dimension,
then $A$ is \emph{Cohen-Macaulay} if
\begin{eqnarray} j_A(M) + \mathrm{GK-dim}(M) = \mathrm{GK-dim}(A)\label{CM}\end{eqnarray} for every
non-zero finitely generated left or right $A-$module $M$. We recall
that if $A$ has a positive filtration such that $\mathrm{gr}(A)$ is
Auslander-Gorenstein, and $M$ is a finitely generated $A-$module
endowed with a standard filtration, then
$$ j_A(M) = j_{\mathrm{gr}(A)}(\mathrm{gr}(M))$$ by \cite[(1) in the proof of Theorem
3.9]{Bj}; so that, since $$\mathrm{GK-dim}(M) =
\mathrm{GK-dim}_{\mathrm{gr}(A)}(\mathrm{gr}(M))$$ by
\cite[Proposition 8.6.5]{MR}, \begin{eqnarray} \textit{if }
\mathrm{gr}(A) \textit{ is Cohen-Macaulay, then so also is }
A.\label{CMgr}
\end{eqnarray}
The following basic facts are well known, and were for the most
part proved for algebras over $\mathbb{C}$ in \cite{EG}; the
proofs over a general field $k$ are identical, depending on
filtered graded techniques and the corresponding statements for
$S(V) \ast \Gamma$, once one knows from (\ref{PBW}) that
$\mathrm{gr}_{\mathcal{F}}(H_{t,\bf{c}}) \cong S(V)\ast \Gamma.$

\begin{thrm} Let $K$ be an arbitrary field, and let $H_{t,\mathrm{\bf{c}}} = H_{t, \mathrm{\bf{c}}}(V,\omega,\Gamma)$ be a symplectic
reflection algebra, with $\mathrm{dim}_K (V) = 2n.$ Then the
following hold.
\begin{enumerate}
\item $H_{t,\mathrm{\bf{c}}}$ is a prime noetherian algebra. \item
$H_{t,\mathrm{\bf{c}}}$ has finite Gel'fand-Kirillov dimension,
$\mathrm{GK}-\mathrm{dim}(H_{t,\mathrm{\bf{c}}}) = 2n.$ \item
$H_{t,\mathrm{\bf{c}}}$ is Auslander-regular and Cohen-Macaulay,
with $\mathrm{gldim}(H_{t,\mathrm{\bf{c}}}) \leq 2n.$
\end{enumerate}
\end{thrm}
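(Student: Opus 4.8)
The plan is to exploit the PBW isomorphism $\mathrm{gr}_{\mathcal{F}}(H_{t,\mathbf{c}}) \cong S(V) \ast \Gamma$ together with standard filtered-to-graded transfer principles, so that every assertion reduces to the corresponding well-understood statement about the skew group algebra $S(V) \ast \Gamma$. For this to work I first need to record the properties of $A := S(V) \ast \Gamma$: since $S(V)$ is a commutative polynomial ring in $2n$ variables and $\mathrm{char}\,K \nmid |\Gamma|$, the algebra $A$ is a finite module over the invariant ring $S(V)^{\Gamma}$, it is noetherian, it has $\mathrm{GK}\text{-dim}(A) = 2n$, it has global dimension $2n$ (Maschke's theorem plus the fact that $S(V)$ is regular of dimension $2n$, via the standard computation of projective dimension for skew group algebras of groups with invertible order), and it is Auslander-regular and Cohen-Macaulay (these are inherited from the polynomial ring $S(V)$, again using that $|\Gamma|$ is invertible so that $A$ is a separable extension of $S(V)$ — or directly, $A$ is a maximal order / has the stated homological symmetry). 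That $A$ is prime requires a small argument: $V$ is a faithful $\Gamma$-module, so $\Gamma$ acts faithfully on $S(V)$, and a skew group algebra of a finite group acting faithfully on a commutative domain is prime.

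Next I would carry out the transfers. For primeness of $H := H_{t,\mathbf{c}}$: $\mathcal{F}$ is a positive (exhaustive, separated) filtration with $\mathrm{gr}(H)$ prime, and a filtered algebra whose associated graded ring is a prime noetherian ring is itself prime noetherian; this gives (i). Noetherianity also descends from $\mathrm{gr}(H)$ noetherian. For (ii): $\mathrm{GK}\text{-dim}(H) = \mathrm{GK}\text{-dim}(\mathrm{gr}(H)) = \mathrm{GK}\text{-dim}(S(V) \ast \Gamma) = \mathrm{GK}\text{-dim}(S(V)) = 2n$, using \cite[Proposition 8.6.5]{MR} for the first equality and the finiteness of $A$ over $S(V)^{\Gamma}$ for the last. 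For (iii): since $\mathrm{gr}(H) \cong S(V) \ast \Gamma$ is Auslander-Gorenstein with $\mathrm{gldim} = 2n$, the standard filtered-graded lifting results (as in \cite{Bj}, and recalled in (\ref{CMgr}) above for the Cohen-Macaulay half) give that $H$ is Auslander-Gorenstein with injective dimension $\leq 2n$; finiteness of global dimension of $H$ follows because $\mathrm{gr}(H)$ has finite global dimension and projective dimension cannot increase on passing from $\mathrm{gr}(M)$ to $M$ (for $M$ with a standard filtration), so $\mathrm{gldim}(H) \leq \mathrm{gldim}(\mathrm{gr}(H)) = 2n$; combined with Auslander-Gorenstein this yields Auslander-regular, and by Zaks' theorem cited above the injective dimensions agree, forcing $\mathrm{gldim}(H) = d \leq 2n$. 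The Cohen-Macaulay property of $H$ is then immediate from (\ref{CMgr}) since $\mathrm{gr}(H)$ is Cohen-Macaulay.

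The one genuinely non-cosmetic point — the main obstacle — is verifying that $\mathrm{gr}(H) = S(V) \ast \Gamma$ is itself Auslander-regular and Cohen-Macaulay, since all of (iii) is bootstrapped from it. Over $\mathbb{C}$ this is done in \cite{EG}, and I expect the argument to be characteristic-free: $S(V)$ is a commutative regular affine domain of dimension $2n$, hence Auslander-regular and Cohen-Macaulay as a ring; and $S(V) \ast \Gamma$ is obtained by a crossed product with a finite group whose order is a unit in $K$. The cleanest route is to observe that $S(V) \ast \Gamma$ and $S(V)$ are Morita-equivalent-up-to-the-idempotent $e = \frac{1}{|\Gamma|}\sum_{\gamma} \gamma$ and, more to the point, that $S(V) \ast \Gamma$ is a finitely generated projective (indeed free, after base change) module over $S(V)$ on both sides with $S(V)$ a direct summand — so the Auslander and Cohen-Macaulay conditions transfer along this finite-split extension; alternatively one cites the fact that for $|\Gamma|$ invertible, $S(V) \ast \Gamma$ is a \emph{homologically homogeneous} / Auslander-regular ring because the extension $S(V)^{\Gamma} \subseteq S(V) \ast \Gamma$ realizes $S(V) \ast \Gamma$ as $\mathrm{End}_{S(V)^{\Gamma}}(S(V))$ and one uses that $S(V)^{\Gamma} \subseteq S(V)$ is a finite extension with $S(V)$ Cohen-Macaulay. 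I would spell out whichever of these is shortest, and then note that all the filtered-to-graded machinery invoked above (\cite{Bj}, \cite{MR}, \cite{Za}) is insensitive to the characteristic of $K$, so the $\mathbb{C}$-proofs of \cite{EG} go through verbatim once the graded input is in place.
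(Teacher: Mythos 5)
Your proposal is correct and follows essentially the same route as the paper: the PBW isomorphism $\mathrm{gr}_{\mathcal{F}}(H_{t,\mathbf{c}})\cong S(V)\ast\Gamma$ together with the standard filtered--graded transfer results of \cite{Bj} and \cite{MR} for primeness, noetherianity, GK-dimension, the Auslander condition, the global dimension bound and (\ref{CMgr}) for Cohen--Macaulayness. The only point of divergence is the key graded input: where you sketch a direct finite-split-extension (Maschke-type) argument for the Auslander-regularity of $S(V)\ast\Gamma$, the paper simply cites \cite{Yi} for this, so your ``main obstacle'' is disposed of by a reference rather than an argument.
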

\begin{proof} (i) \cite[Theorem 1.6.9]{MR} and \cite[Ex.6.6]{Co}.

(ii) \cite[Proposition 8.6.5]{MR} and \cite[Proposition 5.5]{KL}.

(iii) First, $S(V)\ast\Gamma$ is Auslander-regular by \cite{Yi}.
This property lifts to $H_{t, \mathrm{\bf{c}}}$ by \cite[Theorem
3.9 and the Remark following]{Bj}. The Cohen-Macaulay property is
dealt with in the discussion preceding the theorem.
\end{proof}

\subsection{The centre}\label{centre} There is a fundamental
dichotomy in the theory determined by whether or not the parameter
$t$ is zero. Since $H_{t \mathrm{\bf{c}}} \cong H_{\lambda t,
\lambda \mathrm{\bf{c}}}$ for $0 \neq \lambda \in k,$ we need
consider only the cases $t = 0$ and $t = 1.$

\begin{thrm} Let $H_{t,\mathrm{\bf{c}}} = H_{t, \mathrm{\bf{c}}}(V,\omega,\Gamma)$ be a symplectic
reflection algebra over an arbitrary field $K$.
\begin{enumerate}
\item (\cite[Theorem 3.1]{EG}, \cite[Proposition 7.2(2)]{BG})
Suppose that $K$ has characteristic 0. If $t=0,$ then $H_{
0,\mathrm{\bf{c}}}$ is a finite module over its centre
$Z(H_{0,\mathbf{c}})$, which is Gorenstein. If $t = 1,$ then
$Z(H_{1,\mathrm{\bf{c}}}) = K.$
\item  Suppose for the rest of the theorem that $K=k$. \begin{enumerate}\item $(\mathrm{Etingof},$
\cite[Appendix 10]{BFG}$)$ $H_{t,\mathrm{\bf{c}}}$ is a finite
module over its centre for all values of $t.$ With respect to the
filtration $\{F_i\}$ of (\ref{prop}), the associated graded algebra
of $Z = Z(H_{1,\mathbf{c}})$ is
$(S(V)^{\operatorname{p}})^{\Gamma},$ where $(-)^{\operatorname{p}}$
denotes the Frobenius homomorphism. \item $Z(H_{t, \mathbf{c}})$ is
Gorenstein.
\end{enumerate}
\end{enumerate}
\end{thrm}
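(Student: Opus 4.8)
Parts (i) and (ii)(a) are quoted results: (i) combines \cite[Theorem 3.1]{EG} with \cite[Proposition 7.2(2)]{BG}, and (ii)(a) is Etingof's \cite[Appendix 10]{BFG}. The point to record is that these arguments are insensitive to the characteristic once the PBW isomorphism $\mathrm{gr}_{\mathcal{F}}(H_{t,\mathbf{c}}) \cong S(V) \ast \Gamma$ of (\ref{PBW}) is available; in particular $H = H_{t,\mathbf{c}}$ is a finite module over $Z := Z(H_{t,\mathbf{c}})$, and, with $Z$ filtered by $Z_i := Z \cap \mathcal{F}_i$, one has $\mathrm{gr}_{\mathcal{F}}(Z) = (S(V)^{\operatorname{p}})^{\Gamma}$ when $t = 1$, and $\mathrm{gr}_{\mathcal{F}}(Z) = S(V)^{\Gamma}$ when $t = 0$ (for the latter the argument of \cite[Theorem 3.1]{EG} uses nothing beyond $p \nmid |\Gamma|$). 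So everything reduces to (ii)(b), which I would prove by transferring the Gorenstein property from $\mathrm{gr}_{\mathcal{F}}(Z)$ to $Z$.

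The first step is to recognise $\mathrm{gr}_{\mathcal{F}}(Z)$ as a classical ring of invariants. The Frobenius endomorphism $f \mapsto f^{\operatorname{p}}$ of $S(V)$ is injective and $\Gamma$-equivariant, hence induces a $k$-algebra isomorphism $S(V)^{\Gamma} \cong (S(V)^{\operatorname{p}})^{\Gamma}$; thus in both cases $\mathrm{gr}_{\mathcal{F}}(Z) \cong S(V)^{\Gamma}$. Since $\Gamma \subseteq \mathrm{Sp}(V) \subseteq \mathrm{SL}(V)$ and $|\Gamma|$ is a unit in $k$ by (\ref{field}), Watanabe's theorem on the invariant rings of finite subgroups of the special linear group shows that $S(V)^{\Gamma}$ is an affine Gorenstein $k$-algebra; being commutative and Gorenstein it is in particular noetherian and Auslander-Gorenstein.

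Next I would lift these properties along the filtration. The induced filtration $\{Z_i\}$ is positive: it is bounded below, and $Z_0 = Z \cap k\Gamma = k$, because a central element $\sum_{\gamma \in \Gamma} a_{\gamma}\gamma$ of $H$ must commute with every $v \in V$, which forces, for each $\gamma$, either $a_{\gamma} = 0$ or $\gamma \cdot v = v$ for all $v$, and $\Gamma$ acts faithfully on $V$. Since $\mathrm{gr}_{\mathcal{F}}(Z) \cong S(V)^{\Gamma}$ is noetherian and Auslander-Gorenstein, the lifting theorem \cite[Theorem 3.9 and the Remark following]{Bj} shows that $Z$ is itself noetherian and Auslander-Gorenstein; in particular $Z$ has finite injective dimension over itself. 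A commutative noetherian ring of finite self-injective dimension (here also of finite Krull dimension, since $\mathrm{GK-dim}(Z) \leq \mathrm{GK-dim}(H_{t,\mathbf{c}}) = 2n$ by the Theorem of \S\ref{prop}) is Gorenstein, by Bass's criterion, so $Z$ is Gorenstein. This establishes (ii)(b).

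The step needing the most care is the lifting: one must check that the filtration induced on $Z$ really is of the type handled by \cite{Bj} — positive, with noetherian associated graded ring — which is the reason for computing $Z_0 = k$. Alongside this, one must make sure Watanabe's theorem is invoked under its exact hypotheses, $\Gamma \subseteq \mathrm{SL}(V)$ and $p \nmid |\Gamma|$; the first holds because symplectic transformations have determinant $1$, and the second is the standing assumption (\ref{field}).
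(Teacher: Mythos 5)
Your argument is correct and is essentially the paper's own proof: parts (i), (ii)(a) and the $t=0$ case are quoted, and (ii)(b) for $t\neq 0$ is obtained by identifying $\mathrm{gr}(Z)\cong (S(V)^{\operatorname{p}})^{\Gamma}\cong S(V)^{\Gamma}$, invoking Watanabe's theorem since $\Gamma\subseteq \mathrm{Sp}(V)\subseteq SL(V)$, and lifting the (Auslander-)Gorenstein property through the filtration via \cite[Theorem 3.9]{Bj}. Your additional checks -- that the induced filtration on $Z$ is positive with $Z\cap k\Gamma=k$, and that a commutative noetherian ring of finite self-injective dimension is Gorenstein -- merely make explicit what the paper leaves implicit, and are fine.
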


\begin{proof} The statements for $t=0$ are given by \cite[Theorem 3.3]{EG},
for all ground fields. The only other claim which is not as stated
in the cited references is (ii)(b) for non-zero $t$. For this, by
\cite[Theorem 3.9]{Bj} it's enough to prove that $\mathrm{gr}(Z(H))$
is Gorenstein. By the second statement in (ii), bearing in mind that
$(S(V)^{\operatorname{p}})^{\Gamma}=(S(V)^{\Gamma})^{\operatorname{p}},$
$\mathrm{gr}(Z(H))$ is isomorphic to $S(V)^{\Gamma}.$ Since $\Gamma
\subseteq SL(V)$, $S(V)^{\Gamma}$ is Gorenstein by Watanabe's
theorem \cite{Wa}.
\end{proof}

It should be noted that the uncertainty over the precise value of
the global dimension of $H_{t,\mathrm{\bf{c}}}$ noted in Theorem
\ref{prop}(iii) disappears when its centre is big. For in this
case every irreducible $H_{t,\mathrm{\bf{c}}}-$module $M$ is a
finite dimensional $K$-vector space, so that $\mathrm{GK-dim}(M) =
0.$ Therefore, by (\ref{CM}) and Theorem \ref{prop}(ii),
$\mathrm{pr.dim}(M)=2n,$ and so we deduce the

\begin{thrm} Let $K$ be an arbitrary field satisfying (\ref{field}), and suppose that the symplectic reflection algebra $H
=H_{t,\mathrm{\bf{c}}}(V, \omega, \Gamma)$ is a finite module over
its centre. Then
$$ \mathrm{gldim}(H) = \mathrm{Krull \; dim}(H)
= \mathrm{GK-dim}(H) = \mathrm{dim}(V). $$
\end{thrm}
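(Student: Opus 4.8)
The plan is to deduce the three equalities from the structural facts already assembled; the equality $\mathrm{GK-dim}(H) = 2n = \dim(V)$ is exactly Theorem \ref{prop}(ii), so the work lies in the global and Krull dimensions, where the point is to produce a lower bound matching the bound $\mathrm{gldim}(H) \leq 2n$ of Theorem \ref{prop}(iii). The first step is to record that $H$ is an affine PI algebra all of whose simple modules are finite dimensional over $K$. Indeed $H$ is generated as a $K$-algebra by the finite set $V \cup \Gamma$, hence is affine; and since $H$ is by hypothesis a finite module over $Z := Z(H)$, the Artin--Tate lemma shows that $Z$ is an affine commutative $K$-algebra, while $H$, being module-finite over a commutative ring, satisfies a polynomial identity. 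The Nullstellensatz for affine PI algebras \cite{MR} then tells us that every simple $H$-module $M$ is finite dimensional over $K$, so $\mathrm{GK-dim}(M) = 0$.

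For the global dimension I would fix a simple $H$-module $M$, which exists because $H \neq 0$ is noetherian, and compute its projective dimension. By Theorem \ref{prop}(iii), $H$ is Auslander-regular and Cohen--Macaulay with $\mathrm{GK-dim}(H) = 2n$, so the Cohen--Macaulay identity (\ref{CM}) gives $j_H(M) = \mathrm{GK-dim}(H) - \mathrm{GK-dim}(M) = 2n$; in particular $\mathrm{Ext}^{2n}_H(M,H) \neq 0$, so $\mathrm{pd}_H(M) \geq 2n$. Since also $\mathrm{pd}_H(M) \leq \mathrm{gldim}(H) \leq 2n$ by Theorem \ref{prop}(iii), we conclude $\mathrm{pd}_H(M) = 2n$; this forces $\mathrm{gldim}(H) \geq 2n$, and hence $\mathrm{gldim}(H) = 2n = \dim(V)$.

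For the Krull dimension I would use that $H$ is module-finite over the central noetherian subring $Z$. On the one hand, every chain of $H$-submodules of $H$ is in particular a chain of $Z$-submodules, so $\mathrm{Krull \; dim}(H) \leq \mathrm{Krull \; dim}_Z(H) = \mathrm{Krull \; dim}(Z)$, the last equality because $H$ is a faithful finitely generated $Z$-module. On the other hand, classical Krull dimension is bounded above by Gabriel--Rentschler Krull dimension, and lying-over and going-up hold for the module-finite central extension $Z \subseteq H$, so a saturated chain of primes of $Z$ of length $\mathrm{Krull \; dim}(Z)$ lifts to a chain of primes of the same length in $H$, giving $\mathrm{Krull \; dim}(H) \geq \mathrm{Krull \; dim}(Z)$. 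Thus $\mathrm{Krull \; dim}(H) = \mathrm{Krull \; dim}(Z)$, and since $Z$ is affine commutative with $H$ module-finite over it, $\mathrm{Krull \; dim}(Z) = \mathrm{GK-dim}(Z) = \mathrm{GK-dim}(H) = 2n = \dim(V)$, which completes the argument.

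The only genuinely non-formal ingredient is Theorem \ref{prop}(iii) --- the Auslander-regularity and Cohen--Macaulayness of $H$ --- and that is already proved. Beyond it, the steps that need a little care are the implication ``$Z$ affine $\Rightarrow$ every simple $H$-module is finite dimensional over $K$'', which rests on the Nullstellensatz for affine PI algebras, and the comparison of classical with Gabriel--Rentschler Krull dimension in the third paragraph; I do not expect either to present a real obstacle, given the PBW theorem and its homological consequences recorded in $\S$\ref{prop}.
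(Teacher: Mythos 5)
Your argument is correct and is essentially the paper's own: the paper likewise notes that finite generation over the (affine, by Artin--Tate) centre forces every irreducible module $M$ to be finite dimensional, hence $\mathrm{GK\text{-}dim}(M)=0$, and then uses the Cohen--Macaulay identity (\ref{CM}) together with $\mathrm{gldim}(H)\leq 2n$ from Theorem \ref{prop}(iii) to get $\mathrm{pd}_H(M)=2n$ and so $\mathrm{gldim}(H)=2n$. Your third paragraph merely fills in the standard comparison of Krull, Gabriel--Rentschler and Gel'fand--Kirillov dimensions for a ring module-finite over its affine centre, which the paper leaves implicit, so there is no substantive difference in approach.
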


\section{Interplay with the spherical subalgebra}\label{sphere}
For the rest of the paper our ground field will be $k$, and we
focus on the case where $k$ has positive characteristic and the
theory deviates from that over characteristic 0; that is, we shall
assume that the parameter $t$ is non-zero, so that, after
re-scaling, we can assume that $t=1.$ So from now on $H$ will
denote a symplectic reflection $k$-algebra $H :=
H_{1,\mathbf{c}}(V, \omega,\Gamma).$
\subsection{Basic properties}\label{base} Recall that, following
\cite{EG}, the \emph{symmetrising idempotent} $e$ of $H$ is $e =
\frac{1}{|\Gamma|}\sum_{\gamma \in \Gamma} \gamma,$ and $eHe$ is
the \emph{spherical subalgebra} of $H.$ (Note that it is
\emph{not} a subalgebra of $H$ in the usual sense of the word,
since it doesn't contain $1_H.$) The filtration $\{\mathcal{F}_i
\}$ of $H$ induces a filtration $\{e\mathcal{F}e \}$ of $eHe$ with
$i^{\mathrm{th}}$ subspace $\mathcal{F}_iH \cap eHe =
e\mathcal{F}_iH e$. It follows that
\begin{eqnarray} \mathrm{gr}_{e\mathcal{F}e}(eHe) = e\,\mathrm{gr}_\mathcal{F}(H)e = eS(V)\ast \Gamma e \cong
S(V)^{\Gamma}.\label{sphergrad}\end{eqnarray} Standard
filtered-graded methods can therefore be used to deduce the
following theorem. For (v), we recall the definition of a maximal
order. Suppose that the noetherian algebra $A$ is prime, so that
it has a simple artinian quotient ring $Q$ by Goldie's theorem,
\cite[Theorem 2.3.6]{MR}. There is an equivalence relation on
orders in $Q$ defined by: $R\sim S$ if and only if there exist
units $a,b,c,d$ of $Q$ with $aRb \subseteq S$ and $cSd \subseteq
R$, \cite[5.1.1, 3.1.9]{MR}. Then $A$ is a \emph{maximal order} if
it is a maximal member of its equivalence class of orders in $Q$.
A commutative noetherian integral domain is a maximal order if and
only if it is integrally closed, \cite[Lemma 5.3.2]{MR}.

\begin{thrm}\label{spherprop}
\begin{enumerate}
\item $eHe$ is a finitely generated module over its centre $eZe.$
\item $eHe$ is an affine noetherian domain. \item $He$ is a
finitely generated right $eHe-$module.\item $eHe$ is
Auslander-Gorenstein and Cohen-Macaulay. \item $eHe$ is a maximal
order.
\end{enumerate}
\end{thrm}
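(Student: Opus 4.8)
The plan is to exploit the isomorphism $\mathrm{gr}_{e\mathcal{F}e}(eHe) \cong S(V)^{\Gamma}$ recorded in (\ref{sphergrad}), together with the observation that $\Gamma \subseteq \mathrm{Sp}(V) \subseteq \mathrm{SL}(V)$, and push each assertion up from the associated graded ring by standard filtered--graded arguments. Throughout, one uses the theorem of Etingof (Theorem (ii)(a), cited above) which guarantees that $H$ — and hence, as I explain below, $eHe$ — is a finite module over its centre; this is what makes the Gel'fand--Kirillov dimension, Krull dimension and global dimension bookkeeping behave.

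First I would prove (ii): $S(V)^{\Gamma}$ is an affine commutative domain (invariants of a finitely generated algebra under a finite group, Noether's theorem), so $\mathrm{gr}(eHe)$ is an affine noetherian domain, and a filtered algebra whose associated graded ring is an affine noetherian domain is itself one (\cite[Theorem 1.6.9, Corollary 1.6.8]{MR} for the noetherian and affineness transfer; for the domain property one uses that the leading-term map is multiplicative on a filtered ring with domain associated graded). For (iv): by (\ref{sphergrad}), $\mathrm{gr}(eHe) \cong S(V)^{\Gamma}$, which is Cohen--Macaulay because $S(V)$ is and $|\Gamma|$ is a unit in $k$ (Hochster--Eagon, or Watanabe), and is Gorenstein by Watanabe's theorem \cite{Wa} since $\Gamma \subseteq \mathrm{SL}(V)$; moreover $S(V)^{\Gamma}$ is Auslander--Gorenstein — being a commutative Gorenstein ring of finite Krull dimension, it has finite injective dimension and the Auslander condition is automatic for commutative noetherian rings. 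These properties lift to $eHe$ by the filtered--graded results cited already in $\S$\ref{prop}, namely \cite[Theorem 3.9 and the following Remark]{Bj} for the Auslander--Gorenstein property, and the implication (\ref{CMgr}) — or rather its evident analogue with ``Cohen--Macaulay'' in place of ``Gorenstein'' — for the Cohen--Macaulay property, since $\mathrm{GK\text{-}dim}$ is preserved under passage to $\mathrm{gr}$ by \cite[Proposition 8.6.5]{MR}.

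Next, (i) and (iii) are really statements about the Morita-type relationship between $H$ and $eHe$. For (iii), $He$ is a right $eHe$-module, and $\mathrm{gr}_{\mathcal{F}}(He) = \mathrm{gr}_{\mathcal{F}}(H)e = S(V)\ast\Gamma e$, which is a finitely generated right $eS(V)\ast\Gamma e \cong S(V)^{\Gamma}$-module: indeed $S(V)$ is a finitely generated $S(V)^{\Gamma}$-module (Noether), hence $S(V)\ast\Gamma$ is a finitely generated $S(V)^{\Gamma}$-module on the right, and multiplying by $e$ we get that $S(V)\ast\Gamma e$ is finitely generated over $eS(V)\ast\Gamma e$ on the right; now lift by the standard filtered--graded argument (a filtered module whose associated graded is finitely generated over the associated graded ring is itself finitely generated over the original ring, \cite[Proposition I.7.6 or the analogue in MR]{MR}). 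For (i): $eZe = Z(eHe)$ by the Satake isomorphism mentioned in the introduction (the map $z \mapsto eze$ identifies $Z(H)$ with $Z(eHe)$), so I must show $eHe$ is a finite $eZe$-module. One way: $H$ is a finite $Z$-module by Etingof, so $eHe \subseteq eHe + (1-e)H(1-e) \subseteq H$ is a $Z$-submodule of a noetherian $Z$-module, hence a finite $Z$-module; but $Z$ acts on $eHe$ through $eZe$ (since $e$ is central in $Z$-module terms: $ze = eze$ for $z \in Z$ because $Z$ is central), so $eHe$ is a finite $eZe$-module. Alternatively one runs the same graded argument as for (iii): $\mathrm{gr}(eZe) \cong (S(V)^{\Gamma})^{\mathrm{p}}$ and $\mathrm{gr}(eHe) \cong S(V)^{\Gamma}$ is module-finite over its Frobenius subring.

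Finally, (v). A prime noetherian maximal order can be detected at the level of the associated graded ring: if $A$ has a positive filtration with $\mathrm{gr}(A)$ a noetherian maximal order which is moreover a domain (so there is no ambiguity about the localizations involved), then $A$ is a maximal order; this is the filtered--graded criterion for maximal orders, due to essentially the tauberian-type argument in \cite[Chapter IV]{MR} or found in work of Le Bruyn--Van den Bergh on graded maximal orders. So it suffices to know that $S(V)^{\Gamma}$ is a maximal order, and since it is a commutative noetherian domain, by \cite[Lemma 5.3.2]{MR} this amounts to $S(V)^{\Gamma}$ being integrally closed — which it is, being the ring of invariants of the integrally closed domain $S(V)$ under a finite group (the invariant subring of an integrally closed domain in its fraction field, intersected appropriately, is integrally closed). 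Thus $eHe$ is a maximal order. I expect (v) to be the main obstacle: the filtered--graded lifting results for the Cohen--Macaulay and Auslander properties are completely standard and already invoked in $\S$\ref{prop}, but the statement ``$\mathrm{gr}(A)$ a maximal order $\Rightarrow$ $A$ a maximal order'' is more delicate — one has to be careful that the filtration is such that the leading-term function behaves well with respect to the equivalence relation on orders, and to cite the correct reference (this kind of result is in the literature on filtered maximal orders, but is not as ubiquitously quoted as the homological transfer theorems). If a clean reference is not available, the fallback is to prove directly that $eHe$ is integrally closed in its quotient division ring by combining the facts that it is a reflexive $eZe$-module, a maximal order localizes well, and $eZe$ is integrally closed.
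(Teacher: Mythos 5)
Your proposal is correct and follows essentially the same route as the paper: every part is lifted from $\mathrm{gr}(eHe)\cong S(V)^{\Gamma}$ by standard filtered--graded transfer, with (i) coming from Etingof's finiteness theorem, (iii) from $\mathrm{gr}(He)\cong S(V)$ being module-finite over $S(V)^{\Gamma}$, (iv) from Watanabe and Hochster--Eagon, and (v) from the integral closedness of $S(V)^{\Gamma}$. The lifting result you were unsure about in (v) is exactly what the paper invokes, namely Chamarie's theorem \cite[Theorem 5.1.6]{MR}, which applies here precisely because $eHe$ and its associated graded ring are domains.
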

\begin{proof} (i) is clear from Theorem \ref{centre}(ii)(a). Note next that $S(V)^{\Gamma}$
is affine and $S(V)$ is a finitely generated $S(V)^{\Gamma}-$module
by the Hilbert-Noether theorem, \cite[Theorem 1.3.1]{Be}; thus $eHe$
is affine by (\ref{sphergrad}), and since $\mathrm{gr}(He) \cong
S(V)$ (iii) follows also. The rest of (ii) follows from
(\ref{sphergrad}) and \cite[Theorems 1.6.9]{MR} and \cite[Ex.
6.6]{Co}.

To prove (iv) we first observe that $S(V)^{\Gamma}$ is Gorenstein by
Watanabe's theorem \cite[Theorem 1]{Wa}, since $\Gamma \subseteq
\mathrm{Sp}(V) \subseteq SL(V)$. The Auslander-Gorenstein condition
thus follows from \cite[Chapter II, Proposition 2.2.1]{LiVO}.
Moreover, since $S(V)^{\Gamma}$ is Cohen-Macaulay, either by the
Hochster-Eagon theorem \cite[Theorem 4.3.6]{Be}, or since
commutative Gorenstein rings are Cohen-Macaulay, it follows from
(\ref{CMgr}) in $\S$\ref{prop} that $eHe$ is Cohen-Macaulay. For (v)
it's enough, in view of \cite[Theorem 5.1.6]{MR} and
(\ref{sphergrad}), to note that the fixed ring $S(V)^{\Gamma}$ is
integrally closed, \cite[Proposition 1.1.1]{Be}.\end{proof}

\subsection{The Morita context} \label{Morita} In this paragraph we prove a version of \cite[Theorem 1.5]{EG}
for symplectic reflection algebras of positive characteristic. For
the most part the proofs follow the proof of \cite[Theorem 1.5]{EG},
so we only indicate the key steps. Part (ii) is an example of a
\emph{double centralizer property} in the spirit of \cite{KSX}.

\begin{thrm}
\begin{enumerate}
\item $ \End_H(He) \quad \cong \quad eHe.$ \item $\End_{eHe}(He)
\quad \cong \quad H.$ \item $\mathrm{Hom}_{eHe}(He_|,eHe_|) \cong
eH,$ and $\mathrm{Hom}_{eHe}({}_|eH,{}_|eHe) \cong He.$ In
particular, $He$ is a reflexive right $eHe-$module, and $eH$ is left
$eHe-$reflexive.
\end{enumerate}
\end{thrm}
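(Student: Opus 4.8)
The plan is to reduce everything to two inputs: the formal identities attached to an idempotent, and the comparison of $H$ with the skew group algebra $S(V)\ast\Gamma$ through the filtration $\{\mathcal F_i\}$, as in the proof of \cite[Theorem 1.5]{EG}. Part (i) needs no hypothesis on $H$: for any idempotent $e$ of a ring $H$, evaluation at $e$ is an isomorphism $\End_H(He)\to eHe$, $\phi\mapsto\phi(e)=e\phi(e)$, with inverse sending $eae$ to the endomorphism $he\mapsto heae$ of $He=H\cdot e$.

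For (ii) I would look at the $k$-algebra homomorphism $\rho\colon H\to\End_{eHe}(He)$ given by left multiplication, $\rho(h)\colon m\mapsto hm$; since $H$ acts on $He$ on the side opposite to $eHe$, this is a genuine ring map. Its kernel is the left annihilator of the nonzero right ideal $He$, hence $0$ because $H$ is prime (Theorem \ref{prop}(i)). For surjectivity I would pass to associated graded rings: filtering $He$ by $\mathcal F_iH\cdot e$, one has $\gr(He)\cong S(V)$ as a module over $\gr(eHe)\cong S(V)^{\Gamma}$ (this is (\ref{sphergrad}) together with the identification used in the proof of Theorem \ref{spherprop}), $He$ is a finitely generated $eHe$-module with good filtration, and the usual comparison gives an injection $\gr\End_{eHe}(He)\hookrightarrow\End_{S(V)^{\Gamma}}(S(V))$ through which $\gr(\rho)$ factors the canonical map $\gr(H)=S(V)\ast\Gamma\to\End_{S(V)^{\Gamma}}(S(V))$. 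Since the latter is bijective, $\gr(\rho)$ is surjective (and the comparison map is an isomorphism), and hence $\rho$ is surjective by the standard filtered-graded argument. The same reasoning applied to $eH$ as a left $eHe$-module shows that every $eHe$-linear endomorphism of ${}_|eH$ is right multiplication by a unique element of $H$, i.e.\ $\End_{eHe}({}_|eH)\cong H$.

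The step that needs the most care is this filtered-graded reduction, and in particular the fact that the classical identification $\End_{S(V)^{\Gamma}}(S(V))\cong S(V)\ast\Gamma$ survives over a field of characteristic $p$. Injectivity of the canonical map holds for any faithful action; its surjectivity is equivalent to $S(V)$ being unramified over $S(V)^{\Gamma}$ in codimension one, i.e.\ to $\Gamma$ having no pseudo-reflections in its action on $V$. This holds in our situation: a finite-order element $\gamma\in\mathrm{Sp}(V)$ with $\mathrm{Id}-\gamma$ of rank $1$ would, since $\mathrm{char}\,k\nmid|\Gamma|$ (which also excludes the order-$p$ symplectic transvections), be diagonalisable over $k$ with eigenvalues $1,\dots,1,\zeta$ for some $\zeta\neq1$, so $\det\gamma=\zeta\neq1$, contradicting $\mathrm{Sp}(V)\subseteq SL(V)$; and $\mathrm{char}\,k\nmid|\Gamma|$ also ensures ramification is tame, so the absence of pseudo-reflections does give unramifiedness in codimension one.

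Part (iii) is then formal. Given $\psi\in\Hom_{eHe}(He_|,eHe_|)$, compose with the inclusion $eHe\hookrightarrow He$ to regard $\psi$ as an element of $\End_{eHe}(He)$; by (ii) it is left multiplication by a unique $h\in H$, and $hHe=\psi(He)\subseteq eHe$ forces $(1-e)h\,H\,e=0$, whence $(1-e)h=0$ since $H$ is prime and $e\neq0$, so $h\in eH$. Thus the natural $(eHe,H)$-bimodule map $eH\to\Hom_{eHe}(He_|,eHe_|)$, $f\mapsto(m\mapsto fm)$, is onto; it is injective by primeness, which gives the first isomorphism, and the left-handed version of (ii) gives $\Hom_{eHe}({}_|eH,{}_|eHe)\cong He$ in the same way. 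Composing the two isomorphisms identifies the canonical biduality map $He\to\Hom_{eHe}(\Hom_{eHe}(He_|,eHe_|),eHe_|)$ with the bijective natural map $He\to\Hom_{eHe}({}_|eH,{}_|eHe)$, so $He$ is a reflexive right $eHe$-module; symmetrically, $eH$ is left $eHe$-reflexive.
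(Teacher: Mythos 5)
Your argument is correct, and parts (i) and (iii), as well as the overall skeleton of (ii) (left multiplication $H\to\End_{eHe}(He)$, filtration of the endomorphism ring, passage to associated graded rings and the comparison map into $\End_{S(V)^{\Gamma}}(S(V))$), coincide with the paper's proof; your injectivity argument is in fact more direct, using primeness of $H$ to kill the left annihilator of $He$ rather than the paper's dimension count over $Q(S(V)^{\Gamma})$ at the graded level. The genuine divergence is the key surjectivity step for the graded comparison map $S(V)\ast\Gamma \to \End_{S(V)^{\Gamma}}(S(V))$. The paper does not prove this map is bijective directly: it shows the target is a finitely generated module over the image, equivalent to it as an order, and then invokes Martin's theorem that the skew group algebra is a maximal order to force equality. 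You instead invoke the Auslander-type criterion that the map is an isomorphism exactly when $S(V)$ is unramified over $S(V)^{\Gamma}$ in codimension one, and verify the needed hypothesis: $\Gamma\subseteq \mathrm{Sp}(V)\subseteq SL(V)$ contains no pseudo-reflections, with $p\nmid|\Gamma|$ correctly used both to exclude transvections and to ensure tameness. Both routes work; the paper's buys economy of hypotheses at the price of the maximal-order machinery and the order-equivalence argument, while yours yields the stronger and cleaner statement that the graded comparison is an isomorphism outright. One presentational caveat: the equivalence ``surjective iff unramified in codimension one'' is itself a nontrivial theorem (the Auslander theorem in the tame case, proved via reflexivity of both sides over the normal ring $S(V)^{\Gamma}$), so you should cite or prove it rather than assert it; since it is available in the literature for $|\Gamma|$ invertible in $k$, this is a matter of attribution, not a gap.
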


\begin{proof} (Sketch) Of course (i) is clear.

(ii) {\bf Step 1:} \emph{Construction of an ascending filtration
on} $\mathrm{End}_{eHe}(He).$ Since $He$ is filtered by
$\{\mathcal{F}_iHe\}$, we can choose a finite set of generators
$u_1, \ldots , u_l$ of $\mathrm{gr}_{\mathcal{F}}(He),$ whose
lifts $\{\tilde{u}_1, \ldots , \tilde{u}_l\}$ generate the
$eHe-$module $He.$ Let $\tilde{u}_i \in \mathcal{F}_{d_i}(He).$ We
can assume that $\mathcal{F}_n (He) =
\sum_{j=1}^{l}\tilde{u}_j\mathcal{F}_{n-d_j}(eHe)$ for all $n>>0.$
Hence, for $f \in \mathrm{End}_{eHe}(He),$ there is a non-negative
integer $m$ such that $f(\mathcal{F}_n He \subseteq
\mathcal{F}_{n+m}He$ for all $n \geq 0.$ In this case we say that
$f \in \mathcal{F}_m\mathrm{End}$; in this way
$\mathrm{End}_{eHe}(He)$ acquires an ascending filtration.

{\bf Step 2:} \emph{The map} $\eta: H \longrightarrow
\mathrm{End}_{eHe}(He).$ For $h \in H,$ let $\eta(h)$ denote left
multiplication of $He$ by $h$, so it's clear that $\eta: H
\longrightarrow \mathrm{End}_{eHe}(He)$ is an algebra homomorphism.
We have to prove that $\eta$ is an isomorphism. Since $\eta$
preserves the filtrations it induces a homomorphism
$\mathrm{gr}(\eta)$ of the corresponding graded algebras, and it's
enough to prove that the latter is an isomorphism. Consider then the
composition
\begin{eqnarray} S(V)\ast \Gamma \cong \mathrm{gr}(H)
\stackrel{\mathrm{gr}(\eta)}\longrightarrow & \mathrm{gr}(\mathrm{End}_{eHe}(He)) &\\
\nonumber \stackrel{j}\longrightarrow &
\mathrm{End}_{\mathrm{gr}(eHe)}(\mathrm{gr}(He)) & \cong
\mathrm{End}_{S(V)^{\Gamma}}(S(V)), \label{compmap}\end{eqnarray}
where $j$ is the obvious homomorphism. Injectivity follows because,
after tensoring with the quotient field $Q(S(V)^{\Gamma})$ of
$S(V)^{\Gamma})$, the induced map $\psi := \mathrm{Id}
\otimes_{S(V)^{\Gamma}} (j \circ \mathrm{gr}(\eta))$ is easily seen
to be injective.

{\bf Step 3:} \emph{Surjectivity of }$\eta.$ We prove this by
demonstrating surjectivity of $j \circ \mathrm{gr}(\eta),$ and
claim first that
$$ \psi \textit{ is surjective.} $$ This follows because $\psi$ is
an injective homomorphism from $Q(S(V))\ast \Gamma$ to\\
$\mathrm{End}_{Q(S(V)^{\Gamma})}(Q(S(V))),$ and these two
$Q(S(V)^{\Gamma})-$vector spaces have the same dimension, namely
$|\Gamma |^2.$ Hence $j \circ \mathrm{gr}(\eta)(S(V)\ast \Gamma)
\subseteq \mathrm{End}_{S(V)^{\Gamma}}(S(V)),$ and these two
algebras have the same simple artinian quotient ring, namely
$\mathrm{End}_{Q(S(V)^{\Gamma})}(Q(S(V)))$. But note that, since
$S(V)$ is a finitely generated $S(V)^{\Gamma}-$module, so too is
\newline $ \mathrm{End}_{S(V)^{\Gamma}}(S(V)).$

\emph{ A fortiori}, $\mathrm{End}_{S(V)^{\Gamma}}(S(V))$ is a
finitely generated module over its subalgebra $j \circ
\mathrm{gr}(\eta)(S(V)\ast \Gamma)$. However this last algebra is
a maximal order, by \cite[Theorem 4.6]{Ma}, so the inclusion of
algebras must be an equality as required.

(iii) This follows from (ii) just as in \cite{EG} - that is, one
confirms using (ii) that the map from $eH$ to
$\mathrm{Hom}_{eHe}(He,eHe)$ induced by left multiplication is an
isomorphism of right $eHe-$modules. The second statement follows
symmetrically.
\end{proof}

\noindent {\bf Corollary A.} $H$ \emph{is a maximal order.}

\begin{proof} Suppose that $T$ is an order in $Q(H)$ with $H
\subseteq T$ and $T$ equivalent to $H$. By \cite[Lemma 3.1.10]{MR},
there is a non-zero ideal $I$ of $H$ with either $IT \subseteq H$ or
$TI \subseteq H.$ Suppose the former holds. Then $0 \neq
eIe\vartriangleleft eHe,$ since $H$ is prime, with $$(eIe)(eTe)
\subseteq eITe \subseteq eHe. $$ Thus $eTe$ is an order in $eQ(H)e =
Q(eHe),$ which contains $eHe$ and is equivalent to it. So, by
Theorem \ref{base}(v),
\begin{eqnarray} eTe = eHe. \label{car} \end{eqnarray}
Note that, if instead $TI \subseteq H,$ we can still arrive at
(\ref{car}). Bearing in mind the identifications of Theorem
\ref{Morita}(iii), (\ref{car}) shows that $Te \subseteq (eH)^*,$ and
so, by Theorem \ref{Morita}(iii), $$Te = He. $$ In other words, $T
\subseteq \mathrm{End}_{eHe}(He).$ From Theorem \ref{Morita}(ii) we
deduce that $T = H,$ as required.
\end{proof}

\begin{rems}  (i)The proof of the corollary works over fields of
characteristic 0, and is independent of the value of the parameter
$t$; in these other cases the result does not seem to have been
previously recorded.

(ii) One would naturally expect to prove the above corollary by
lifting the result using filtered-graded methods from the
corresponding result for the skew group algebra $S(V) \ast \Gamma
\cong \mathrm{gr}_F(H).$ However the relevant lifting theorem in the
literature, Chamarie's theorem \cite[Theorem 5.1.6]{MR}, requires
the algebras involved to be domains. While this defect can
presumably be rectified, it seems more efficient to proceed as
above.
\end{rems}

Below and in (\ref{return}) we need the concept of a
\emph{localizable} prime ideal $P$ of a noetherian ring $R$: this
means that the set $\mathcal{C}(P)$ of elements of $R$ whose images
{\it modulo} $P$ are not zero divisors forms a (right and left) Ore
set in $R$. When this happens, one can invert the elements of
$\mathcal{C}(P)$ to obtain the local ring $R_P$, a partial quotient
ring of the factor ring of $R$ by the ideal $I = \{ r \in R : cr = 0
\textit{ or } rc = 0 \textit{ for some } c \in \mathcal{C}(P) \}.$
When $R$ is a finite module over its centre $Z$, a prime ideal $P$
of $R$ is localizable when (and, in fact, only when) it is the
\emph{unique} prime of $R$ lying over $P \cap Z$, and one sees
easily that in this case  we can form $R_P$ by inverting the
elements of $Z \setminus (P \cap Z).$ For background on these ideas,
see, for example, \cite{GW}.

Standard Morita theory \cite[Proposition 3.5.6]{MR} applied to the
theorem above tells us that $H$ is Morita equivalent to $eHe$
exactly when $HeH = H.$ More precisely, the size of $HeH$ indicates
how close $H$ and $eHe$ are to being Morita equivalent. In this
connection, we thus have:

\begin{lem} Let $e$ be the symmetrising idempotent of $H$. Let
$\mathfrak{p}$ be a prime ideal of $Z:= Z(H)$ with $HeH \cap Z
\subseteq \mathfrak{p}.$ Then $\mathfrak{p}$ has height at least 2.
\end{lem}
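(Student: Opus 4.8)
The plan is to localise $H$ at the central prime $\mathfrak{p}$ and to show that if $\mathrm{ht}\,\mathfrak{p}\le 1$ then the ideal $HeH$ would become the whole localised ring, contradicting $HeH\cap Z\subseteq\mathfrak{p}$.

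\emph{Preliminaries.} Since $H$ is prime, $Z$ is a domain and $H$ is torsion-free as a $Z$-module (if $0\ne z\in Z$ and $0\ne h\in H$ with $zh=0$ then $(HzH)(HhH)=0$, contradicting primeness). Next, $Z$ is integrally closed: if $z\in Q(Z)$ is integral over $Z$ then, as $z$ is central, the subring $H[z]$ of $Q(H)$ generated by $H$ and $z$ is a finitely generated left $H$-module lying inside $H\otimes_Z Q(Z)$, so $H[z]c\subseteq H$ for a suitable $0\ne c\in Z$; thus $H[z]$ is an order in $Q(H)$ equivalent to $H$, and Corollary~A forces $H[z]=H$, whence $z\in H\cap Q(Z)=Z$. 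Consequently, for a prime $\mathfrak{p}$ of $Z$ with $\mathrm{ht}\,\mathfrak{p}\le1$, the localisation $Z_{\mathfrak{p}}$ at the central Ore set $C:=Z\setminus\mathfrak{p}$ is either $Q(Z)$ or a discrete valuation ring. As $H$ is a finite module over $Z$ (Theorem~\ref{centre}(ii)(a)) and is $Z$-torsion-free, $H_{\mathfrak{p}}:=HC^{-1}$ is a prime ring, finite as a module over $Z_{\mathfrak{p}}$, and is still a maximal order in $Q(H)$ --- the maximal-order property being preserved under localisation at a central Ore set (cf. \cite[\S5.1]{MR}; alternatively this can be re-derived exactly as in Corollary~A). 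Finally, using $Z$-torsion-freeness one checks $H_{\mathfrak{p}}eH_{\mathfrak{p}}=(HeH)C^{-1}$ and $(HeH)C^{-1}\cap Z_{\mathfrak{p}}=(HeH\cap Z)C^{-1}\subseteq\mathfrak{p}Z_{\mathfrak{p}}\subsetneq Z_{\mathfrak{p}}$, so $H_{\mathfrak{p}}eH_{\mathfrak{p}}\ne H_{\mathfrak{p}}$. It therefore suffices to reach a contradiction by proving $H_{\mathfrak{p}}eH_{\mathfrak{p}}=H_{\mathfrak{p}}$ whenever $\mathrm{ht}\,\mathfrak{p}\le1$.

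\emph{Structure over $Z_{\mathfrak{p}}$ and the endgame.} Write $Q(H)\cong M_r(D)$ with $D$ a division ring. A maximal order in $M_r(D)$ that is finite as a module over $Z_{\mathfrak{p}}$ is, by the classical structure theory of maximal orders (trivially when $Z_{\mathfrak{p}}$ is a field, and by the structure theorem over a discrete valuation ring otherwise), isomorphic to $M_r(\Lambda)$, where $\Lambda$ is the unique maximal $Z_{\mathfrak{p}}$-order in $D$; since $\Lambda$ is a local ring, $M_r(\Lambda)$ has, up to isomorphism, a single indecomposable finitely generated projective module. Now $H_{\mathfrak{p}}e$ is a non-zero direct summand of the left module $H_{\mathfrak{p}}\cong M_r(\Lambda)$, hence a non-zero finitely generated projective left $M_r(\Lambda)$-module, hence a direct sum of copies of that indecomposable projective and so a generator; therefore its trace ideal is all of $H_{\mathfrak{p}}$, i.e. $H_{\mathfrak{p}}eH_{\mathfrak{p}}=H_{\mathfrak{p}}$. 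This is the required contradiction, so $\mathfrak{p}$ has height at least $2$. (In particular $\mathfrak{p}=0$ does not arise; this also follows directly since the non-zero ideal $HeH$ of the prime PI ring $H$ meets $Z$ non-trivially.)

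\emph{Main obstacle.} The delicate point is the middle step: one needs both that central localisation preserves the maximal-order property and the classical classification of maximal orders over a discrete valuation ring, and it is the finiteness of $H$ over $Z$ that makes this classical theory applicable here. An alternative, leaning on the paper's own results, is to localise instead the spherical subalgebra --- $eHe$ being a maximal order that is a domain (Theorem~\ref{spherprop}) --- and to use the reflexivity of $He$ over $eHe$ together with $\End_{eHe}(He)\cong H$ (the theorem of \S\ref{Morita}): over the local maximal order $(eHe)_{\mathfrak{p}}$ a finitely generated reflexive module is free, which again realises $H_{\mathfrak{p}}$ as a matrix ring over a local ring, after which the endgame is identical. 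Everything else is routine localisation bookkeeping.
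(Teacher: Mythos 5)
Your reduction to the localised statement is fine: the torsion-freeness of $H$ over $Z$, the integral closedness of $Z$ via Corollary A, the identification $H_{\mathfrak{p}}eH_{\mathfrak{p}}=(HeH)C^{-1}$ and the computation $(HeH)C^{-1}\cap Z_{\mathfrak{p}}\subseteq\mathfrak{p}Z_{\mathfrak{p}}$ are all correct, and the height-zero case is harmless. The genuine gap is the ``endgame'': you invoke ``the structure theorem over a discrete valuation ring'' to write $H_{\mathfrak{p}}\cong M_r(\Lambda)$ with $\Lambda$ \emph{the unique} maximal $Z_{\mathfrak{p}}$-order in $D$ and $\Lambda$ \emph{local}. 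That is the theorem for \emph{complete} discrete valuation rings; here $Z_{\mathfrak{p}}$ is the localisation of an affine domain at a height-one prime and is never complete. Over a non-complete DVR maximal orders in a division algebra are in general neither unique nor local: for instance, if $D$ is a quaternion division algebra over $\mathbb{Q}$ which splits at $5$, every maximal $\mathbb{Z}_{(5)}$-order $\Lambda$ in $D$ satisfies $\Lambda/J(\Lambda)\cong M_2(\mathbb{F}_5)$, and there are infinitely many such $\Lambda$ (conjugate but distinct). So the assertion ``$\Lambda$ is local, hence $M_r(\Lambda)$ has a single indecomposable finitely generated projective, hence every nonzero projective is a generator'' is not justified as written; the same objection applies to your alternative sketch, since $(eHe)_{\mathfrak{p}}$ is likewise not known to be local in the sense needed for ``finitely generated reflexive $\Rightarrow$ free''.

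The step can be repaired, but it needs a real argument: either base-change to the completion $\widehat{Z_{\mathfrak{p}}}$ (maximality of the order passes to the completion, the complete-DVR structure theorem then applies, and faithful flatness detects the properness of the idempotent ideal $H_{\mathfrak{p}}eH_{\mathfrak{p}}$), or use the two-sided ideal theory of maximal orders over Dedekind domains (the nonzero two-sided ideals of $H_{\mathfrak{p}}$ form a group under multiplication, so a proper nonzero idempotent ideal such as a proper trace ideal cannot exist). The paper takes a shorter route at exactly this point: since $H$ is a prime maximal order finite over its centre, its height-one primes are localizable by Maury--Raynaud \cite{MaRa} and M\"uller's theorem \cite{Mu} (or \cite{BGb}), so there is a unique prime $P$ of $H$ over $\mathfrak{p}$ and $H_{\mathfrak{p}}$ is local with Jacobson radical $PH_{\mathfrak{p}}$; then $HeH\cap Z\subseteq\mathfrak{p}$ forces the idempotent $e$ into the radical, whence $1-e$ is a unit, an immediate contradiction. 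Your strategy is the same in spirit (localise and show $e$ must generate), but the paper's idempotent-in-the-radical argument bypasses precisely the classical structure theory that your write-up misquotes.
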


\begin{proof} Suppose the result is false: so there is a prime
$\mathfrak{p}$ of $Z$ of height 1, with \begin{eqnarray} HeH \cap Z
\subseteq \mathfrak{p}.\label{inside}\end{eqnarray} Thus, we can
localize at $\mathfrak{p}$ by inverting the elements $Z \setminus
\mathfrak{p},$ to obtain the ring $H_{\mathfrak{p}}.$ Since $H$ is a
prime maximal order and is a finite module over its centre, by
Theorems \ref{prop}(i) and \ref{centre}(ii), and Corollary
\ref{Morita}A, all its height one primes are localizable, by
\cite[Propositions II.2.2 and II.2.6, and
Theor$\grave{\mathrm{e}}$me IV.2.15]{MaRa}. Equivalently, by
\cite[Theorem III.9.2]{BGb} or \cite[Theorem 7]{Mu}, there is a
\emph{unique} height one prime of $H$ lying over $\mathfrak{p}$ -
let's call it $P.$ Thus $H_{\mathfrak{p}}$ is a local ring with
Jacobson radical $PH_{\mathfrak{p}}.$ Now (\ref{inside}) ensures
that $H_{\mathfrak{p}}eH_{\mathfrak{p}}$ is a proper ideal of
$H_{\mathfrak{p}}$, so that $e \in PH_{\mathfrak{p}},$ the Jacobson
radical of $H_{\mathfrak{p}}.$ Hence $1-e$ is a unit in
$H_{\mathfrak{p}}$, a contradiction. So the result is proved.
\end{proof}

\noindent {\bf Corollary B.} \emph{Let} $\mathfrak{p}$ \emph{be any
prime ideal of $Z$ such that $HeH \cap Z$ is not contained in
$\mathfrak{p}.$ Then $H_{\mathfrak{p}}$ is Morita equivalent to
$eH_{\mathfrak{p}}e.$ In particular, this is true for every prime
ideal of $Z$ of height 1.}

\subsection{The Satake homomorphism} \label{satake} The proof of
the next result also follows the corresponding argument used to
prove \cite[Theorem 3.1]{EG}; but note the important difference
that, when $t=0,$ $eHe$ is commutative.

\begin{thrm} The map $\theta : H \rightarrow eHe : u \mapsto eue$
is an algebra homomorphism when restricted to the centre $Z$ of
$H$, and maps $Z$ isomorphically to the centre of $eHe.$
\end{thrm}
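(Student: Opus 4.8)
The plan is to follow the classical Satake argument of \cite[Theorem 3.1]{EG} and adapt it to positive characteristic, where the only real change is keeping track of the fact that $eHe$ need not be commutative when $t=1$ (though it is a domain by Theorem \ref{spherprop}(ii)). First I would show that $\theta|_Z$ is multiplicative: for $z,z' \in Z$ we have $e z e \cdot e z' e = e z e z' e = e z z' e^2$? — more precisely, since $z$ is central it commutes with $e$, so $eze = ze = ez$, and hence $(eze)(ez'e) = z e e z' e = z z' e = e(zz')e$; thus $\theta$ restricts to a ring homomorphism $Z \to eHe$ (in fact landing in $Z(eHe)$, since for $z \in Z$ and $eue \in eHe$ one has $ze\cdot eue = zeue = euze = eue\cdot ze$). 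Injectivity of $\theta|_Z$ is immediate: if $eze = 0$ then $ze = 0$, and since $H$ is prime (Theorem \ref{prop}(i)) and $z$ is central, $ze = 0$ forces $z = 0$ (the right annihilator of the nonzero idempotent $e$ contains no nonzero central element of a prime ring).

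The substantive point is surjectivity onto $Z(eHe)$. Here I would use the associated graded filtration argument: $Z = Z(H)$ carries the induced filtration from $\{\mathcal F_i\}$, and by Theorem \ref{centre}(ii)(a), $\mathrm{gr}(Z) \cong (S(V)^{\mathrm p})^{\Gamma}$. Likewise $eHe$ has $\mathrm{gr}(eHe) \cong S(V)^{\Gamma}$ by (\ref{sphergrad}), and since $eHe$ is a finite module over its affine centre $eZe$ (Theorem \ref{spherprop}(i)), the centre $Z(eHe)$ also inherits a filtration. The map $\theta$ is filtered (it sends $\mathcal F_i \cap Z$ into $\mathcal F_i \cap eHe$), and its associated graded map is the classical Satake map for $S(V)\ast\Gamma$, namely $(S(V)^{\mathrm p})^{\Gamma} \to eS(V)\ast\Gamma e \cong S(V)^{\Gamma}$, $z \mapsto eze$. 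One checks that this graded map has image exactly $(S(V)^{\Gamma})^{\mathrm p} = (S(V)^{\mathrm p})^{\Gamma}$, and that $(S(V)^{\mathrm p})^{\Gamma}$ is precisely the centre of the Frobenius-finite algebra $S(V)^{\Gamma}$ — this is where one uses that $S(V)^{\Gamma}$ is a commutative domain finite over its Frobenius subring. A standard filtered-graded lifting argument (if $\mathrm{gr}(\theta)$ is surjective onto $\mathrm{gr}(Z(eHe))$ then $\theta$ is surjective onto $Z(eHe)$, both being exhaustively and separatedly filtered) then gives the result.

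The main obstacle I anticipate is the identification $\mathrm{gr}(Z(eHe)) = (S(V)^{\mathrm p})^{\Gamma}$ inside $\mathrm{gr}(eHe) \cong S(V)^{\Gamma}$: one has $\mathrm{gr}(Z(eHe)) \subseteq Z(\mathrm{gr}(eHe)) = S(V)^{\Gamma}$, but the reverse containment, i.e. that every graded central element of $S(V)^{\Gamma}$ lifts to a central element of $eHe$, is not automatic and must be argued — the cleanest route is probably to prove directly that $\mathrm{gr}(\theta(Z))$ already equals $(S(V)^{\mathrm p})^{\Gamma}$ and that $\theta(Z) \subseteq Z(eHe)$ with $(S(V)^{\mathrm p})^{\Gamma} = Z(S(V)^{\Gamma})$, then conclude by a dimension/GK-dimension count that $Z(eHe)$ can be no larger than $\theta(Z)$. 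Concretely: $eHe$ is finite over $\theta(Z)$ (since $He$ is a finite $eHe$-module and $H$ is finite over $Z$, so $eHe$ is finite over $eZe = \theta(Z)$), hence $Z(eHe)$ is finite over $\theta(Z)$; and $\theta(Z)$ is integrally closed (being isomorphic to $Z$, which is a normal domain by Corollary \ref{Morita}A applied to the commutative setting — or directly since $\mathrm{gr}(Z) \cong S(V)^{\Gamma}$ is normal by Theorem \ref{spherprop}(v)'s argument) with the same fraction field as $Z(eHe)$, forcing equality. I would present the argument in this last form, as it sidesteps the delicate graded-lifting step for the centre.
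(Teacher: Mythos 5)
Your final argument is correct in outline but takes a genuinely different route from the paper. The paper gets surjectivity of $\theta|_Z$ onto $Z(eHe)$ almost immediately from the double centralizer property, Theorem \ref{Morita}(ii): for $eae \in Z(eHe)$, right multiplication $r_{eae}$ is an $eHe$-module endomorphism of $He$, hence equals left multiplication by some $\xi(eae)\in H$, and $\xi(eae)$ is central because $r_{eae}$ commutes with all left multiplications by elements of $H$; this $\xi$ is an explicit inverse to $\theta|_Z$. You instead deduce $Z(eHe)=\theta(Z)$ from commutative algebra: $Z(eHe)$ is integral over $\theta(Z)\cong Z$, $Z$ is integrally closed (via Corollary \ref{Morita}A and the fact that the centre of a prime maximal order is integrally closed, or by lifting normality of $\gr(Z)\cong S(V)^{\Gamma}$), and the two rings have the same fraction field. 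That is a viable alternative which trades the double centralizer theorem for the maximal-order/normality machinery already established in $\S$\ref{sphere}; the paper's route is shorter and does not need Posner-type facts about quotient rings.

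Two caveats. The substantive one is the asserted equality of fraction fields, which is exactly the crux of surjectivity and is not automatic: a priori $Z(eHe)$ could be a proper integral extension of $\theta(Z)$ inside a larger field, and normality of $\theta(Z)$ alone does not exclude this. To close it you need $Z(eHe)\subseteq Q(Z)e$, which follows from: $Q(H)$ is central simple over $Q(Z)$ (Posner's theorem, since $H$ is prime and a finite module over its affine centre), $Q(eHe)=eQ(H)e$ (as used in the proof of Corollary \ref{Morita}A), and $e$ is a full idempotent of the simple artinian ring $Q(H)$, so $Z(eQ(H)e)=Z(Q(H))e=Q(Z)e$. With that step inserted your argument is complete. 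Secondly, in the filtered-graded sketch you assert $(S(V)^{\operatorname{p}})^{\Gamma}=Z(S(V)^{\Gamma})$; this is false, since $S(V)^{\Gamma}$ is commutative and hence its own centre -- the genuine issue there is, as you yourself note, comparing $\gr(Z(eHe))$ with $Z(\gr(eHe))$. Since you abandon that route in favour of the normality argument, this slip does not affect your final proof, but it should not be left in the write-up. The multiplicativity and injectivity parts of your proposal are fine and agree with the paper's (one-line) treatment.
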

\begin{proof} (Sketch) It's obvious that the restriction of $\theta$ to $Z$
is an algebra homomorphism. We define an inverse map $\xi: Z(eHe)
\longrightarrow Z$ as follows. Let $eae \in Z(eHe)$, so that right
multiplication of $He$ by $eae$ defines a right $eHe-$module
endomorphism $r_{eae}$ of $He$. By Theorem \ref{Morita}(ii) this
endomorphism must be induced by left multiplication of $He$ by an
element $\xi(eae)$ of $H$; moreover, since $r_{eae}$ commutes with
the left multiplications of $He$ by the elements of $H$, $\xi(eae)
\in Z.$ It's now clear that $\xi$ is an algebra homomorphism, and
easy to check that it is inverse to $\theta_{|Z}.$
\end{proof}

\subsection{The Morita context revisited}
\label{return} An important theme in the study of those symplectic
reflection algebras $H$ which are finite modules over their centres
$Z$ has been the determination of the groups and parameter values
(if any) for which the centre is smooth \cite{EG}, \cite{G},
\cite{Mar}, \cite{Bel}.

Recall \cite[Theorem III.1.6]{BGb} that a prime $k-$algebra $A$,
finitely generated as a module over over its affine centre $Z$, is
\emph{Azumaya} over $Z$ if (and only if) all the irreducible
$A-$modules have the same vector space dimension (which is then
necessarily equal to the PI-degree of $A$. As we'll see in
(\ref{Azumaya}), the smoothness of $Z(H)$ is closely related to the
question of when $H$ is Azumaya over $Z$. Regarding this latter
question, we have

\begin{thrm} Let $P$ be a prime ideal of $H$, and let $\mathfrak{p}$
be the prime $P \cap Z$ of $Z:=Z(H)$. Consider the following
statements:
\begin{enumerate}
\item $\mathfrak{p}$ is in the Azumaya locus.
\item There exists a positive integer $s$ such that
$$ H_{\mathfrak{p}} \cong M_s(eH_{\mathfrak{p}}e),$$
and $eH_{\mathfrak{p}}e$ is a local ring with Jacobson radical
$e\mathfrak{p}H_{\mathfrak{p}}e.$
\item $P$ is a localizable ideal of $H$.
\item $P$ is the unique prime ideal of $H$ lying over
$\mathfrak{p}.$
\item $HeH \cap Z \nsubseteqq \mathfrak{p},$ and $ePe$ is the unique
prime ideal of $eHe$ lying over $e\mathfrak{p}e.$
\item There exists a positive integer $s$ such that
$$ H_{\mathfrak{p}} \cong M_s(eH_{\mathfrak{p}}e),$$
and $eH_{\mathfrak{p}}e$ is a local ring.
\end{enumerate}
Then (i) $\Leftrightarrow$ (ii) $\Rightarrow$ (iii)
$\Leftrightarrow$ (iv) $\Leftrightarrow$ (v) $\Leftrightarrow$ (vi).
\end{thrm}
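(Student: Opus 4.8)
The plan is to prove the chain of implications in the order stated, exploiting throughout the fact that $H$ is a prime noetherian algebra, finitely generated as a module over its affine Gorenstein centre $Z$ (Theorems \ref{prop}(i), \ref{centre}(ii)), so that the standard dictionary between primes of $H$ and primes of $Z$ applies, as recorded in \cite{BGb}, \cite{Mu}. In particular I will use repeatedly that a prime $P$ of $H$ is localizable if and only if it is the unique prime lying over $\mathfrak{p} = P \cap Z$, and that in that case $H_{\mathfrak{p}}$ is obtained by inverting $Z \setminus \mathfrak{p}$; this already gives (iii) $\Leftrightarrow$ (iv) essentially for free. The equivalence (i) $\Leftrightarrow$ (ii) is the heart of the matter: first I would note that, by definition, $\mathfrak{p}$ lies in the Azumaya locus precisely when $H_{\mathfrak{p}}$ is Azumaya over $Z_{\mathfrak{p}}$, equivalently (since $Z_{\mathfrak{p}}$ is local) when $H_{\mathfrak{p}}$ is a matrix algebra over an Azumaya algebra that is in fact, because the residue field $k$ is algebraically closed, split; so $H_{\mathfrak{p}} \cong M_s(D)$ with $D$ a local ring whose unique simple module has $k$-dimension $1$ over the centre. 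The Morita context of Theorem \ref{Morita} passes to the localization, and the point is that over the Azumaya locus $HeH \cap Z \nsubseteq \mathfrak{p}$ — for if $HeH \cap Z \subseteq \mathfrak{p}$ then $e$ would be in the Jacobson radical of $H_{\mathfrak{p}}$ exactly as in the proof of the Lemma, forcing $eH_{\mathfrak{p}}e = 0$, impossible — so by Corollary B $H_{\mathfrak{p}}$ is Morita equivalent to $eH_{\mathfrak{p}}e$; combining Morita equivalence with the Azumaya/split structure identifies the idempotent $e$ with a primitive-times-multiplicity decomposition and yields $H_{\mathfrak{p}} \cong M_s(eH_{\mathfrak{p}}e)$ with $eH_{\mathfrak{p}}e$ local having the stated Jacobson radical. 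The converse (ii) $\Rightarrow$ (i) is easier: if $H_{\mathfrak{p}} \cong M_s(eH_{\mathfrak{p}}e)$ with the latter local, then $H_{\mathfrak{p}}$ has a unique simple module, so $\mathfrak{p}$ is in the Azumaya locus by the characterization recalled before the theorem.

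Next I would handle (ii) $\Rightarrow$ (iii): if $H_{\mathfrak{p}}\cong M_s(eH_{\mathfrak{p}}e)$ with $eH_{\mathfrak{p}}e$ local, then $H_{\mathfrak{p}}$ is a local ring in the noncommutative sense (its quotient by the Jacobson radical is $M_s(k)$, simple artinian), so $\mathfrak{p} Z_{\mathfrak{p}}$ — and hence $P$ — is localizable; concretely the Ore condition for $\mathcal{C}(P)$ is inherited from invertibility of $Z\setminus\mathfrak{p}$. For (iv) $\Leftrightarrow$ (v): assuming the unique-prime condition (iv), one gets $H_{\mathfrak{p}}$ local; the Morita-context argument of the Lemma shows $HeH \cap Z \nsubseteq \mathfrak{p}$ (otherwise $1-e$ is a unit in $H_{\mathfrak{p}}$, contradiction), and then Corollary B gives a Morita equivalence between $H_{\mathfrak{p}}$ and $eH_{\mathfrak{p}}e$, under which the unique prime $PH_{\mathfrak{p}}$ corresponds to the unique prime $ePH_{\mathfrak{p}}e = (ePe)H_{\mathfrak{p}}e$ of $eH_{\mathfrak{p}}e$, and since $eZe \cong Z(eHe)$ by Theorem \ref{satake} this says exactly that $ePe$ is the unique prime of $eHe$ over $e\mathfrak{p}e$; the reverse implication runs backwards through the same Morita correspondence, using that $HeH \cap Z \nsubseteq \mathfrak{p}$ is now part of the hypothesis (v).

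For (v) $\Rightarrow$ (vi): granted $HeH \cap Z \nsubseteq \mathfrak{p}$, Corollary B gives $H_{\mathfrak{p}}$ Morita equivalent to $eH_{\mathfrak{p}}e$; the uniqueness of $ePe$ over $e\mathfrak{p}e$ makes $eH_{\mathfrak{p}}e$ local, and a Morita equivalence between a ring $A$ and a local ring $B$ together with the fact that $B \cong eAe$ for an idempotent $e$ forces $A \cong M_s(B)$ for some $s$ (the progenerator is free of rank $s$ over the local ring $B$) — this is the step that produces the matrix form with the same $s$-shape as in (ii). Finally (vi) $\Rightarrow$ (iv): if $H_{\mathfrak{p}} \cong M_s(eH_{\mathfrak{p}}e)$ with $eH_{\mathfrak{p}}e$ local, then $H_{\mathfrak{p}}$ has a unique maximal ideal, hence a unique prime lying over $\mathfrak{p}$, which unwinds to uniqueness of $P$ over $\mathfrak{p}$ in $H$ itself, using that localization at $Z\setminus\mathfrak{p}$ induces a bijection between primes of $H$ contained in (the preimage of) those of $H_{\mathfrak{p}}$. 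Assembling these gives (iii) $\Leftrightarrow$ (iv) $\Leftrightarrow$ (v) $\Leftrightarrow$ (vi) and (i) $\Leftrightarrow$ (ii) $\Rightarrow$ (iii).

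The main obstacle I anticipate is the precise bookkeeping in (i) $\Leftrightarrow$ (ii) and (v) $\Rightarrow$ (vi): one must be careful that the integer $s$ appearing is genuinely the multiplicity of $e$ as a sum of primitive idempotents in $H_{\mathfrak{p}}/\mathrm{rad}$, that $e$ does not become zero or a unit after localization (ruled out by the $HeH \cap Z \nsubseteq \mathfrak{p}$ condition, via the Lemma's argument), and that "local ring" is used consistently in the noncommutative sense (unique maximal one-sided, equivalently two-sided, ideal) so that the Morita-transported statements about unique primes are literally equivalent to locality. The rest is an orchestration of Corollary B, Theorem \ref{satake}, and the standard prime-correspondence results \cite{BGb}, \cite{Mu}, \cite{MaRa} already invoked in the Lemma.
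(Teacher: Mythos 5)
Your architecture for the cycle (iii)$\Leftrightarrow$(iv)$\Leftrightarrow$(v)$\Leftrightarrow$(vi) is essentially the paper's: M\"uller's theorem for (iii)$\Leftrightarrow$(iv), the idempotent-in-the-radical contradiction from the Lemma to get $HeH\cap Z\nsubseteqq\mathfrak{p}$, Corollary B for the Morita equivalence $H_{\mathfrak{p}}\sim eH_{\mathfrak{p}}e$, transfer of locality through the Morita lattice isomorphism, and freeness of the progenerator $H_{\mathfrak{p}}e$ over the local ring $eH_{\mathfrak{p}}e$ to produce $M_s(eH_{\mathfrak{p}}e)$; your reshuffling (proving (v)$\Rightarrow$(iv) and (vi)$\Rightarrow$(iv) directly instead of the paper's (v)$\Rightarrow$(vi)$\Rightarrow$(iii)) is harmless.

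The genuine gap is in (ii)$\Rightarrow$(i). You argue: matrix ring over a local ring, hence $H_{\mathfrak{p}}$ has a unique simple module, hence $\mathfrak{p}$ is Azumaya ``by the characterization recalled before the theorem.'' That characterization compares the dimensions of \emph{all} irreducible modules of an algebra affine over its centre with the PI-degree; after localizing at a single prime, uniqueness of the simple module says nothing about whether its dimension equals the PI-degree, so it does not give the Azumaya property. Note that your argument uses only the content of (vi) and never the extra hypothesis in (ii) that $J(eH_{\mathfrak{p}}e)=e\mathfrak{p}H_{\mathfrak{p}}e$; were it correct, it would prove (vi)$\Rightarrow$(i), an implication the theorem deliberately does not assert and which the authors say in the Remark following it that they cannot prove. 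The paper's proof of (ii)$\Rightarrow$(i) rests precisely on the central generation of the radical: (ii) forces $J(H_{\mathfrak{p}})=\mathfrak{p}H_{\mathfrak{p}}$, whence $H_{\mathfrak{p}}$ is Azumaya by \cite[Theorem III.1.6]{BGb}. Relatedly, in your (i)$\Rightarrow$(ii) the appeal to splitness of the Azumaya algebra over $Z_{\mathfrak{p}}$ ``because the residue field $k$ is algebraically closed'' is unjustified: $\mathfrak{p}$ need not be maximal, so the residue field is $Q(Z/\mathfrak{p})$, and in any case $Z_{\mathfrak{p}}$ is not Henselian, so an Azumaya algebra over it need not split. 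Fortunately this is also unnecessary: Azumaya gives $J(H_{\mathfrak{p}})=\mathfrak{p}H_{\mathfrak{p}}$ and hence the first part of (v) via the Lemma's argument; then Corollary B, the identity $J(eH_{\mathfrak{p}}e)=eJ(H_{\mathfrak{p}})e$, and the progenerator-freeness mechanism you already invoke in (v)$\Rightarrow$(vi) (which is the paper's route) give the matrix form together with the stated Jacobson radical.
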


\begin{proof} $(i)\Rightarrow (iii):$ This is clear, since, if $\mathfrak{p}$ is Azumaya in $H$,
then inverting $Z \setminus \mathfrak{p}$ yields the local ring
$H_P.$

$(iii) \Leftrightarrow (iv): $ This is M\"{u}ller's theorem,
\cite[Theorem 7]{Mu} or \cite[Theorem III.9.2]{BGb}.

$(iv) \Rightarrow (v):$ Assume (iv), which by the above is
equivalent to (iii). If $HeH \cap Z \subseteq \mathfrak{p},$ then
$H_PeH_P$ is a proper ideal of $H_P = H[Z \setminus
\mathfrak{p}]^{-1}$, and this is impossible just as in the proof of
Lemma \ref{Morita}. This proves the first part of (v); the second
part follows from Corollary \ref{Morita}, which shows that $eH_Pe =
eH_{\mathfrak{p}}e$ is local with Jacobson radical $ePH_Pe.$

$(v) \Rightarrow (vi):$ Assume (v). By Corollary \ref{Morita}
$H_{\mathfrak{p}}$ is Morita equivalent to $eH_{\mathfrak{p}}e$, and
the latter ring is local by (v). Now $H_{\mathfrak{p}}e$ is a
finitely generated projective right module over the local domain
$eH_{\mathfrak{p}}e$, by the Morita theorems and Theorem
\ref{prop}(ii) and (iii). Hence
$H_{\mathfrak{p}}e_{|eH_{\mathfrak{p}}e}$ is free of finite rank
$s$, and its endomorphism ring $H_{\mathfrak{p}}$ is thus isomorphic
to $M_s(eH_{\mathfrak{p}}e)$.

$(vi) \Rightarrow (iii):$ Assume (vi). Then $H_{\mathfrak{p}}$ is a
local ring in which $PH_{\mathfrak{p}}$ is a maximal ideal. Thus
$PH_{\mathfrak{p}}$ must be the Jacobson radical of
$H_{\mathfrak{p}}$, so $P$ is localizable, proving (iii).

$(ii) \Rightarrow (i):$ If (ii) holds, then $H_{\mathfrak{p}}$ is a
local ring whose Jacobson radical is generated by its intersection
with the centre. Thus $H_{\mathfrak{p}}$ is Azumaya, by
\cite[Theorem III.1.6]{BGb}.

$(i) \Rightarrow (ii):$ Assume (i). Then, as we have seen, (vi)
holds, and, writing $J(R)$ for that Jacobson radical of a ring $R$,
$$J(H_{\mathfrak{p}}) \cong J(M_s(eH_{\mathfrak{p}}e)) =
M_s(J(eH_{\mathfrak{p}}e))=M_s(e\mathfrak{p}H_{\mathfrak{p}}e),$$
since $\mathfrak{p}$ is Azumaya.
\end{proof}

\begin{rems} (i) We expect that (i) - (vi) in the above theorem
should be equivalent. This is true for all symplectic reflection
algebras (over any field) at $t=0$, by \cite[Theorem 1.7]{EG}. But
the proof depends crucially on the commutativity of $eHe$, in
particular \cite[Lemma 2.24]{EG}. When $k$ has positive
characteristic and $t=1$ we can't prove the equivalence of (i) -
(vi) even in the case of Cherednik algebras; the difficulty lies in
our inadequate understanding of the relation of the maximal order
$eHe$ to its centre.

(ii) We expect the integer $s$ of Theorem \ref{return}(vi) to be
$|\Gamma|$; we prove this in Theorem \ref{goldie} when $H$ is a
Cherednik algebra.

\end{rems}

\section{Cherednik algebras - structure}\label{Cherone}
\subsection{} \label{Cherdef} For the rest of the paper we shall assume that $H$ is a
\emph{rational Cherednik algebra} over an algebraically closed
field $k$ of positive characteristic $p.$ We fix a finite
dimensional $k-$vector space $\mathfrak{h}$ and a finite group
$\Gamma$ of automorphisms of $\mathfrak{h}$. We shall assume
throughout that $\Gamma$ is generated by pseudo-reflections for
its action on $\mathfrak{h}.$ Then $\Gamma$ acts on $V:=
\mathfrak{h} \oplus \mathfrak{h}^*,$ and this space admits a
canonical $\Gamma -$invariant symplectic form $\omega ,$ defined
by $\omega ((u,f),(x,g)) := g(u) - f(x)$ for $u,x \in
\mathfrak{h}$ and $f,g \in \mathfrak{h^*}.$ The set $S$ of
pseudo-reflections in $\Gamma$ is then the set of symplectic
reflections for $\Gamma$ acting on $V$, so - as in (\ref{def}) -
we can choose $t \in k$ and a $\Gamma -$invariant function
$\textbf{c}: S \longrightarrow k$, and define the symplectic
reflection algebra $H_{t,\mathrm{\bf{c}}} :=
H_{t,\mathrm{\bf{c}}}(V,\omega,\Gamma)$. We shall continue to
assume throughout that $$t=1.$$

\subsection{The central invariant subalgebra} \label{invar} Let $H =
H_{1,\mathrm{\bf{c}}} := H_{1,\mathrm{\bf{c}}}(\mathfrak{h}\oplus
\mathfrak{h}^*,\omega,\Gamma)$ be a rational Cherednik algebra.
Notice that the skew group algebras $S(\mathfrak{h})\ast \Gamma$
and $S(\mathfrak{h}^*)\ast \Gamma$ are contained in $H$, as is
clear from the defining relations of $H$ and the
Poincar$\acute{\mathrm{e}}$-Birkhoff-Witt theorem (\ref{PBW}) .
Hence the  centres of these algebras, $S(\mathfrak{h})^{\Gamma}$
and $S(\mathfrak{h}^*)^{\Gamma}$, are also in $H$. Less trivially,
it is proved in \cite[Proposition 4.15]{EG} using Dunkl operators
that, when $t=0$ and $k$ has characteristic 0, \begin{eqnarray}
S(\mathfrak{h})^{\Gamma} \otimes S(\mathfrak{h}^*)^{\Gamma}
\subseteq Z(H); \label{grunge}
\end{eqnarray} an alternative proof of the same result is given in
\cite[Proposition 3.6]{G}. When $t = 1$ and the characteristic is
positive, $H_{1,\mathbf{0}}$ is  $D(\mathfrak{h})\ast \Gamma$, the
skew group algebra of the Weyl algebra, so its centre is easily
calculated to be $(S(\mathfrak{h} \oplus
\mathfrak{h}^*)^{\operatorname{p}})^{\Gamma}$. It's obvious,
therefore, what the characteristic $p$ analogue of (\ref{grunge})
should be; we offer here a proof of the result which is completely
elementary and can be adapted to give also an easy proof of the
original characteristic 0 theorem.

\begin{prop} Let $H = H_{1, \mathbf{c}}(\mathfrak{h}\oplus \mathfrak{h}^*,\omega,\Gamma)$ be a rational Cherednik
algebra. Then
$$ Z_0 := (S(\mathfrak{h})^{\operatorname{p}})^{\Gamma}\otimes (S(\mathfrak{h}^*)^{\operatorname{p}})^{\Gamma}
\subseteq Z(H).$$
\end{prop}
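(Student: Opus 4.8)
The plan is to verify directly, using the PBW theorem (\ref{PBW}), that the generators of $Z_0$ commute with all the algebra generators of $H$ — that is, with $\Gamma$, with $\mathfrak{h}$, and with $\mathfrak{h}^*$. Since $Z_0$ is generated as a $k$-algebra by $\Gamma$-invariants of $S(\mathfrak{h})^{\operatorname{p}}$ together with $\Gamma$-invariants of $S(\mathfrak{h}^*)^{\operatorname{p}}$, and since $H$ is generated by $\Gamma$ and $V = \mathfrak{h}\oplus\mathfrak{h}^*$, it suffices to treat one ``half'' and then invoke the symmetry $\mathfrak{h}\leftrightarrow\mathfrak{h}^*$. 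So fix a $\Gamma$-invariant element $z \in (S(\mathfrak{h})^{\operatorname{p}})^{\Gamma}$; I must show $z$ is central in $H$.

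First, commutation with $\Gamma$ is immediate: $\gamma z \gamma^{-1} = \gamma(z)$ for $\gamma\in\Gamma$ (the $\Gamma$-action on $S(\mathfrak{h})\subseteq H$ is by conjugation), and this equals $z$ because $z$ is $\Gamma$-invariant. Commutation with $\mathfrak{h}$ is also clear, since $S(\mathfrak{h})$ is a commutative subalgebra of $H$ by PBW, so $z$ commutes with all of $S(\mathfrak{h})\supseteq\mathfrak{h}$. The entire content is therefore in showing $[y,z]=0$ for $y\in\mathfrak{h}^*$. The key observation is that $z$, being a $p$-th power of an element of $S(\mathfrak{h})$, can be written $z = w^p$ with $w\in S(\mathfrak{h})^{\Gamma}$ (using $(S(\mathfrak{h})^{\operatorname{p}})^{\Gamma} = (S(\mathfrak{h})^{\Gamma})^{\operatorname{p}}$ as noted in the proof of Theorem \ref{centre}).

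The plan for $[y,z]=0$ is to analyse the adjoint action $\mathrm{ad}(y)$ on the subalgebra $S(\mathfrak{h})$. From the defining relations (\ref{tight}) with $t=1$, for $x\in\mathfrak{h}$ one has $[y,x] = \omega(y,x)\cdot 1_\Gamma - \sum_{s\in S}c_s\,\omega_s(y,x)\,s$; this lies in $k\Gamma$. More importantly, I want to argue that $\mathrm{ad}(y)$ restricted to $S(\mathfrak{h})$ behaves, modulo terms involving group elements, like a derivation — or better, to filter things so that on the associated graded $S(V)\ast\Gamma$ the operator $\mathrm{ad}(y)$ is an honest $k$-linear derivation of $S(\mathfrak{h})$ (the usual contraction/Dunkl-type operator), and a derivation $\partial$ of a commutative $k$-algebra in characteristic $p$ kills all $p$-th powers since $\partial(w^p)=p\,w^{p-1}\partial(w)=0$. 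The honest statement I would aim to establish is: for each $y\in\mathfrak{h}^*$ there is an operator $D_y$ on $S(\mathfrak{h})$, built from $\mathrm{ad}(y)$, which is a sum of a $k\Gamma$-linear derivation-type term and group-element corrections, and that $\mathrm{ad}(y)(w^p)$ vanishes because each constituent differential operator is first-order and annihilates $p$-th powers, while the correction terms $c_s\omega_s(y,-)\cdot s$ acting repeatedly still produce, on $w^p$, expressions of the form $(\text{something})\cdot s$ where the ``something'' is a derivative-like image of $w^p$, hence zero. Concretely: I expect to prove by induction on the degree of a monomial $x_1\cdots x_m\in S(\mathfrak{h})$ a formula for $[y, x_1\cdots x_m]$ as a sum over subsets, each summand carrying a factor $[y,x_i]\in k\Gamma$ times the remaining product with a group element moved through; when one then specialises to $z=w^p$ and uses that in characteristic $p$ the multinomial coefficients governing $\mathrm{ad}(y)$ acting on a $p$-th power all vanish, one gets $[y,z]=0$.

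\textbf{Main obstacle.} The delicate point is bookkeeping the noncommutativity: when $\mathrm{ad}(y)$ hits a product in $S(\mathfrak{h})$, the ``correction'' terms $c_s\omega_s(y,x)s$ carry group elements $s$ which act nontrivially on the remaining factors, so $\mathrm{ad}(y)$ is \emph{not} literally a derivation of $S(\mathfrak{h})$ — it is a twisted/Dunkl operator. The crux is to show that nonetheless $\mathrm{ad}(y)$ annihilates $\Gamma$-invariant $p$-th powers. The cleanest route is probably to pass to $\mathrm{gr}_{\mathcal F}(H)\cong S(V)\ast\Gamma$, where the correction terms (which are $t\omega(x,y)$ plus lower-order-in-filtration pieces — note $\omega_s$ terms have the \emph{same} filtration degree as the $t\omega$ term, both landing in $\mathcal F_0 = k\Gamma$) must be handled: there $\mathrm{ad}(y)$ on $S(\mathfrak{h})$ is genuinely a derivation $\iota_y$ (contraction with $y$) since in $S(V)\ast\Gamma$ the relation is $yx-xy = \omega(y,x)\cdot 1_\Gamma$... but wait, that is still $t\omega$, not zero, and the $\omega_s$ corrections survive into gr. So I would instead work inside $H$ directly and exploit the Dunkl-operator embedding: $H$ acts faithfully on $S(\mathfrak{h})\otimes k\Gamma$ (the ``polynomial representation''), with $\mathfrak{h}^*$ acting by Dunkl operators $T_y$ — first-order differential operators twisted by divided-difference terms $\frac{1}{\alpha_s}(1-s)$ — and $S(\mathfrak{h})$ acting by multiplication. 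On this module, a $\Gamma$-invariant $z=w^p$ acts by multiplication by $w^p$; and I must check $T_y(w^p\cdot f) = w^p\cdot T_y(f)$ for all $f$, i.e. $T_y(w^p)=0$ and the divided-difference parts also respect the $w^p$ factor. The divided-difference operator $\frac{1-s}{\alpha_s}$ applied to $w^p\cdot f$ gives $\frac{w^p f - s(w^p)s(f)}{\alpha_s} = \frac{w^p(f - s(f))}{\alpha_s}$ since $s(w^p)=s(w)^p$ and $w$ is $\Gamma$-invariant so $s(w)=w$ — good, that factors out $w^p$ cleanly. And the genuine differential part of $T_y$ kills $w^p$ since it is a characteristic-$p$ derivation. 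Faithfulness of the Dunkl representation (which holds over $k$ since $|\Gamma|$ is a unit — this is where (\ref{field}) is used, and it is the one nontrivial input beyond PBW) then forces $[y,z]=0$ in $H$, completing the proof; the symmetric argument with $\mathfrak{h}$ and $\mathfrak{h}^*$ interchanged gives centrality of $(S(\mathfrak{h}^*)^{\operatorname{p}})^{\Gamma}$, and these two commuting central subalgebras generate $Z_0\subseteq Z(H)$.
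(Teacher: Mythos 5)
Your route is genuinely different from the paper's, which never mentions Dunkl operators: there the result is deduced from Etingof's theorem that $\mathrm{gr}_{\mathcal F}(Z(H))=(S(V)^{\operatorname{p}})^{\Gamma}$ (Theorem \ref{centre}(ii)(a)), by playing the $\mathcal F$-filtration off against the $\mathbb Z$-grading with $\mathrm{degree}(x_i)=1$, $\mathrm{degree}(y_i)=-1$ to show that a central lift of a homogeneous invariant symbol must equal that invariant on the nose. Your computational core is the characteristic-zero argument of \cite[Proposition 4.15]{EG} pushed into characteristic $p$: write $z=w^p$ with $w$ invariant (legitimate, since Frobenius is $\Gamma$-equivariant and injective), note that the honest differential part of a Dunkl operator is a derivation and so kills $p$-th powers, and that the reflection corrections commute with $w^p$ because $s(w^p)=w^p$. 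That core is correct, and you rightly discarded your first sketch (induction with multinomial coefficients on $\mathrm{ad}(y)$), which founders exactly on the group-element twists you identified.

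There is, however, a genuine flaw in the step that closes the argument: you invoke faithfulness of the polynomial ("Dunkl") representation of $H$, and that is false in characteristic $p$. Already for $\mathbf c=\mathbf 0$ one has $H_{1,\mathbf 0}=\mathcal D(\mathfrak h)\ast\Gamma$, and the nonzero central element $(\partial/\partial y_i)^p$ annihilates the whole polynomial module, since $\partial^p(y^m)=m(m-1)\cdots(m-p+1)\,y^{m-p}=0$ in characteristic $p$; indeed this failure of faithfulness is precisely the mechanism that makes the centre large here, so it cannot serve as the engine of the proof. The repair is to avoid the module entirely and argue ring-theoretically: use the injective algebra homomorphism $\Theta_{\mathbf c}\colon H\hookrightarrow\mathcal D(\mathfrak h^{\mathrm{reg}})\ast\Gamma$ of Theorem \ref{Dunkl} (whose proof does not depend on the proposition, so there is no circularity), recall the explicit Dunkl form of the images of the elements of $\mathfrak h$ from the proof in \cite{EG}, and compute commutators inside $\mathcal D(\mathfrak h^{\mathrm{reg}})\ast\Gamma$: $[\partial_x,w^p]=\partial_x(w^p)=0$, $[f_s\,s,\,w^p]=f_s\bigl(s(w^p)-w^p\bigr)s=0$ since $s(w^p)=w^p$ and $f_s$ lies in the commutative ring $k[\mathfrak h^{\mathrm{reg}}]$, and $[\gamma,w^p]=0$ by invariance. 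Since $\Theta_{\mathbf c}$ is the identity on the polynomial half and is injective, this gives $w^p\in Z(H)$ for that half, and your $\mathfrak h\leftrightarrow\mathfrak h^*$ symmetry (using the embedding with the roles of the two halves exchanged) handles the other tensor factor. With that substitution your proof is complete, and it stands as an alternative to the paper's elementary filtration/grading argument, at the cost of invoking the Dunkl embedding rather than Etingof's description of $\mathrm{gr}(Z)$.
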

\begin{proof} Fix a pair of dual bases $\{x_i\}$ and $\{y_i\}$ of
$\mathfrak{h}$ and $\mathfrak{h}^*.$ Thus the filtration
$\{\mathcal{F}_i\}$ of (\ref{prop}) is defined by setting
degree$(x_i)=$ degree$(y_i)=1$ and degree$(\gamma)=0,$ for $1 \leq i
\leq n$ and $\gamma \in \Gamma.$ Note that $H$ is also a
$\mathbb{Z}-$graded algebra, with degree$(x_i)=1,$ degree$(y_i)=
-1,$ and degree$(\gamma)=0,$ for $1 \leq i \leq n$ and $\gamma \in
\Gamma.$ Let's denote the $i$th graded subspace by
$\mathcal{M}_i(H),$ so $H = \oplus_{i \in \mathbb{Z}}
\mathcal{M}_i(H),$ and, putting $\mathcal{M}_i(Z) :=
\mathcal{M}_i(H) \cap H,$
\begin{eqnarray} Z := Z(H) = \oplus_{i\in \mathbb{Z}}
\mathcal{M}_i(Z).\label{zgrad}\end{eqnarray}

Let $f$ be a homogeneous element of
$(S(\mathfrak{h})^{\operatorname{p}})^{\Gamma}$ of degree $m$. We
aim first to show that
\begin{eqnarray} f \in Z. \label{frobinv}\end{eqnarray}
Let $\sigma_m^{\mathcal{F}} : \mathcal{F}_mZ \longrightarrow
\mathcal{F}_mZ/\mathcal{F}_{m-1}Z$ be the symbol map of degree
$m.$ Since $f \in S(\mathfrak{h}),$ its $\mathcal{F}-$degree and
$\mathcal{M}-$degree are the same, namely
$$ \mathrm{degree}_{\mathcal{F}}(f) =  \mathrm{degree}_{\mathcal{M}}(f) = m.$$
Note that $(S(V)^{\Gamma})^{\operatorname{p}} \cong
(S(V)^{\operatorname{p}})^{\Gamma},$ by the $\Gamma -$equivariance
of the Frobenius homomorphism. Thus, by Theorem \ref{centre}(ii)(a),
there exists $z \in \mathcal{F}_m Z$ with $\sigma_m^{\mathcal{F}}(z)
= f.$ On the other hand, by (\ref{zgrad}), we can write
$$z = z_1 + z_2, $$
where $z_1 \in \mathcal{M}_m(Z)$ and $z_2 \in \bigoplus_{j \neq
m}\mathcal{M}_j(Z).$ Now, by the
Poincar$\acute{\mathrm{e}}$-Birkhoff-Witt theorem,
(\ref{prop})(\ref{PBW}), there is \emph{no} cancellation between
$\sigma_m^{\mathcal{F}}(z_1)$ and $\sigma_m^{\mathcal{F}}(z_2).$
That is,
$$ f = \sigma_m^{\mathcal{F}}(z) = \sigma_m^{\mathcal{F}}(z_1) + \sigma_m^{\mathcal{F}}(z_2).$$
Since $\mathcal{M}-$degree$(f) = m,$
$$ z_1 = f + g,$$
where $g \in \mathcal{M}_m(H).$ We claim that $g = 0.$ Suppose $g
\neq 0.$ If there is a monomial $\mathbf{x}^I$ in $\{x_1, \ldots ,
x_n \}$, with $I = (m_i) \in \mathbb{Z}_{\geq 0}^n$, and $\gamma
\in \Gamma$, such that $u := \mathbf{x}^I\gamma$ occurs in the PBW
expression for $g$ with non-zero coefficient, then $u$ would
appear in $\sigma_m^{\mathcal{F}}(z_1)$, and could not be
cancelled by any term from $\sigma_m^{\mathcal{F}}(z_2)$ since
$\mathcal{M}-$degree$(u)= m.$ This would contradict the fact that
$\sigma_m^{\mathcal{F}}(z)=f.$ Hence every basis term $u$ in $g$
has the form $\mathbf{x}^I\mathbf{y}^J\gamma$ with $J \neq (0,
\ldots , 0).$ But $g \in \mathcal{M}_m(H),$ so that $|I| - |J| =
m,$ where $|I| = \sum_i m_i.$ Therefore $|I| > m$, forcing
$\mathcal{F}-$degree$(u)>m.$ However this contradicts $z \in
\mathcal{F}_mZ,$ and so $g=0.$ Therefore $z = z_1 + z_2 = f +
z_2.$ That is, $f = z_1 \in \mathcal{M}_m(Z),$ so (\ref{frobinv})
is proved.

Since $(S(\mathfrak{h})^{\operatorname{p}})^{\Gamma}$ is generated
by homogeneous elements, it follows that $\\
(S(\mathfrak{h})^{\operatorname{p}})^{\Gamma} \subseteq Z$; and, by
swapping the grading on $\mathfrak{h}$ and $\mathfrak{h}^*$, we can
show in the same way that
$(S(\mathfrak{h}^*)^{\operatorname{p}})^{\Gamma}\subseteq Z.$
Finally, the Poincar$\acute{\mathrm{e}}$-Birkhoff-Witt theorem
implies that the subalgebra of $Z$ generated by these two invariant
rings is $(S(\mathfrak{h})^{\operatorname{p}})^{\Gamma} \otimes
(S(\mathfrak{h}^*)^{\operatorname{p}})^{\Gamma}$, completing the
proof of the proposition.
\end{proof}

\noindent \textbf{Remark.} Keep the notation of the proposition. By
the Shepherd-Todd-Chevalley theorem, \cite[Theorem 7.2.1]{Be}, the
central subalgebra $Z_0$ is a polynomial algebra in $2n$
indeterminates. Moreover, by classical invariant theory
 $S(\mathfrak{h})$ [resp. $S(\mathfrak{h}^*)$] is a
free $(S(\mathfrak{h})^{\operatorname{p}})^{\Gamma}-$module [resp.
$(S(\mathfrak{h}^*)^{\operatorname{p}})^{\Gamma}-$module] of rank
$p^n |\Gamma|.$ Thus, by the
Poincar$\acute{\mathrm{e}}$-Birkhoff-Witt theorem,
\begin{eqnarray} H \textit{ is a free } Z_0-\textit{module of rank }
p^{2n}|\Gamma|^3. \label{free} \end{eqnarray} Hence there is a
bundle $\mathcal{B}$ of algebras of $k-$dimension $p^{2n}|\Gamma|^3$
over affine $2n-$space, and every irreducible $H-$module is a module
for precisely one of the algebras in $\mathcal{B}.$ Thus it makes
sense to study the representation theory of $H$ by studying
$\mathcal{B}.$
\medskip

\noindent \textbf{Example.} \emph{Kleinian singularities of Type
A.} Let $r \in \mathbb{Z}$, $r > 1,$ with $r$ coprime to $p,$ and
let $\eta$ be a primitive $r$th root of 1 in $k.$ Let
$\mathfrak{h} = kx,$ $\mathfrak{h}^* = ky,$ and let $\Gamma =
\langle \gamma \rangle$ be the cyclic group of order $r$ acting on
$\mathfrak{h}$ by $\gamma.x = \eta x,$ so that $\gamma.y =
\eta^{-1} y.$ Thus, for $\mathbf{c} = (c_1, \ldots , c_{r-1}) \in
k^r ,$ $H = H_{1, \mathbf{c}}(\mathfrak{h}\oplus \mathfrak{h}^*,
\omega, \Gamma)$ is the algebra
\begin{eqnarray} k \langle x,y,\gamma : & \gamma x = \eta x \gamma,
\; \gamma y = \eta^{-1} y \gamma, \\ \nonumber & [y,x] = 1 -
\sum_{j=1}^{r-1} c_j \gamma^j \rangle. \label{Klein}
\end{eqnarray}
One checks easily that $$ x^{pr} \in Z(H), \quad y^{pr} \in Z(H).
$$ It is convenient to have available also the basis of $k\Gamma$
afforded by the primitive idempotents $e_j :=
\frac{1}{r}\sum_{i=0}^{r-1}\eta^{ij}\gamma^i ,$ for $j= 0, \ldots
, r-1,$ with respect to which we write the commutator relation as
$$ [y,x] = \sum_{j=0}^{r-1}f_je_j, $$ for $\mathbf{f} = (f_j) \in
k^n$ with $\sum_j f_j = r.$ The interested reader may write down
the linear relations between the $c_i$ and the $f_j.$ Define
$$ \tau = xy + \sum_{i=1}^{r-1}(i - \sum_{j=0}^{i-1}f_j)e_i \in H.
$$
Then $[\tau, x] = x$ and $[\tau, y] = -y,$ so that $$ h := \tau^p
- \tau \in Z(H). $$ Define elements $\{\delta_m : 0 \leq m \leq
r-1 \}$ of $k$ by
$$ \delta_m := \sum_{j=1}^{r-1}c_j(1-\eta^{-j})^{-1}\eta^{mj} =
-\frac{1}{r}\sum_{l=0}^{r-1}(\rho_{m,l+1})f_l,$$ where
$\rho_{m,l+1} := \sum_{j=1}^{r-1}[\eta^{(m+l)j}/(\eta^j -  1)].$
The following result is proved by direct calculation in
\cite[Chapter 3]{Noi}.
\begin{prop} Let $H = H_{1,\mathbf{c}}$ be a symplectic reflection
algebra for the Kleinian singularity of type A, as defined above.
Keep the notation as above.
\begin{enumerate}\item $Z(H) = k \langle x^{pr}, y^{pr}, \tau
\rangle.$ \item $ Z(H) \cong k[X,Y,Z : XY = \Pi_{m=0}^{r-1}(Z +
\delta_m^p - \delta_m)].$ \item $Z(H)$ is smooth if and only if
$\delta_i - \delta_j \in \mathbb{Z}$ only when $i = j.$
\end{enumerate}

\end{prop}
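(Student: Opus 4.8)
The plan is to compute the three pieces in order, working inside the concrete presentation of $H$ as $k\langle x,y,\gamma\rangle$ with the commutator relation $[y,x]=\sum_j f_j e_j$. First I would establish (i). One inclusion is easy: $x^{pr}$ and $y^{pr}$ are central (one checks $\gamma x^{pr}=\eta^{pr}x^{pr}\gamma=x^{pr}\gamma$ since $r\mid pr$, and $[y,x^{pr}]$ telescopes to a multiple of $\sum_j f_j e_j$ times a sum of powers of $x$ which, raised appropriately, vanishes $p$-adically — the cleanest route is to note $[y,x^r]$ lands in $k\Gamma\cdot(\text{something})$ and then $[y,x^{pr}]=0$ in characteristic $p$), and $\tau$ is central by the displayed computation $[\tau,x]=x,[\tau,y]=-y$, giving $[\tau^p-\tau,x]=[\tau^p-\tau,y]=0$ and commutation with $\Gamma$ being clear. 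For the reverse inclusion I would use the $\mathbb{Z}$-grading $\deg x=1$, $\deg y=-1$, $\deg\gamma=0$ from the proof of Proposition~\ref{invar}: a central element decomposes into graded components, each still central; a central element of degree $d>0$ must be a polynomial in $x$ times a group element, and centrality against $\Gamma$ forces it into $x^{rk}$-powers, then centrality against $\tau$ (which acts on $x^{rk}$ with eigenvalue $rk$) forces $p\mid rk$, i.e. it lies in $k\langle x^{pr},\tau\rangle$; symmetrically for negative degree; the degree-zero part is a polynomial in $\tau$ and $xy$ (equivalently in $\tau$ after absorbing the idempotent correction) — here one invokes Proposition~\ref{invar}, which already gives $Z_0=k\langle x^{pr},y^{pr}\rangle\subseteq Z(H)$ and that $H$ is finite over $Z_0$, so $Z(H)$ is a finite module over $k\langle x^{pr},y^{pr}\rangle$ and a degree count pins down the remaining generator as $\tau$ (or its $p$-th power minus itself). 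Finishing this degree-zero analysis cleanly is the first place care is needed.

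Next, for (ii), set $X=x^{pr}$, $Y=y^{pr}$, $Z=h=\tau^p-\tau$; I would compute the single relation they satisfy. The identity to exploit is a normal-ordering formula for $x^{pr}y^{pr}$ in terms of $\tau$: iterating $[y,x]=\sum_j f_j e_j$ one shows $yx=xy+\sum_j f_j e_j$ and more generally $x^r y^r$ (resp. the full $x^{pr}y^{pr}$) equals a product $\prod_{m}(\tau - a_m)$ for suitable scalars $a_m$ recorded via the $\delta_m$'s, because $xy$ differs from $\tau$ by the idempotent combination $\sum_{i\ge1}(i-\sum_{l<i}f_l)e_i$ and the $e_i$ are orthogonal idempotents on which $\tau$ acts by translation. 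Concretely, on the $e_m$-isotypic piece $x$ and $y$ act as a Weyl-algebra pair shifted so that $yx$ and $xy$ differ by the scalar coming from $f$, and one reads off $xy\big|_{e_m}=\tau+\delta_m$-type expressions; taking $pr$-th powers and using $(\tau+c)^p-(\tau+c)=\tau^p-\tau+c^p-c=Z+c^p-c$ in characteristic $p$ produces $XY=\prod_{m=0}^{r-1}(Z+\delta_m^p-\delta_m)$. That the map $k[X,Y,Z]/(XY-\prod_m(Z+\delta_m^p-\delta_m))\to Z(H)$ is an isomorphism (not just surjective with this relation in the kernel) follows from (i) together with a dimension/freeness count: both sides are finite modules over $k[X,Y]=k\langle x^{pr},y^{pr}\rangle$ of the same rank $pr$ (the hypersurface has rank $\deg_Z=r$ in $Z$... — more precisely one matches ranks using that $Z(H)$ has $\mathrm{gr}\cong S(V)^\Gamma$ from Theorem~\ref{centre}, which here is $k[x^r,y^r,xy]$, a rank-$r$ extension of $k[x^{pr},y^{pr}]$ — wait, one must reconcile this with the degree in $Z$; the honest check is that $Z(H)$ is generated by $X,Y,Z$ with the stated relation and no other, which the grading argument of part (i) supplies). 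The main obstacle is precisely this bookkeeping: getting the scalars $\delta_m$ and the product formula exactly right, and confirming the relation ideal is principal. The reference to \cite[Chapter 3]{Noi} indicates this is a ``direct calculation'' that has been carried out in full there, so in the paper I would state it as such rather than reproduce it.

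Finally, (iii) is a smoothness criterion for the surface $XY=g(Z)$ where $g(Z)=\prod_{m=0}^{r-1}(Z+\delta_m^p-\delta_m)$. A hypersurface $XY-g(Z)=0$ in $\mathbb{A}^3$ is singular exactly at points where all partials vanish: $Y=0$, $X=0$, $g'(Z)=0$, together with $g(Z)=0$; so singular points exist iff $g$ and $g'$ have a common root, i.e. iff $g$ has a repeated root. The roots of $g$ are $-(\delta_m^p-\delta_m)$, $m=0,\dots,r-1$, so $g$ has a repeated root iff $\delta_i^p-\delta_i=\delta_j^p-\delta_j$ for some $i\ne j$, equivalently $\delta_i-\delta_j\in\mathbb{F}_p$ (the kernel of the Artin–Schreier map $t\mapsto t^p-t$ on $k$ is $\mathbb{F}_p$). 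I would then translate ``$\in\mathbb{F}_p$'' to the statement ``$\in\mathbb{Z}$'' as written — here one should note the $\delta_m$ are, by their defining formula as $k$-linear combinations of the $c_j$ with $\mathbb{Z}[\eta]$-coefficients, built so that the relevant equality is governed by $\mathbb{Z}$, or more honestly, this is the intended reading of the hypothesis on the parameters and $\mathbb{F}_p\subseteq k$ is identified with the prime subring; I would phrase (iii) so that ``$\delta_i-\delta_j\in\mathbb{Z}$'' is interpreted as ``$\delta_i-\delta_j$ lies in the prime subfield $\mathbb{F}_p$''. The only subtlety here is that smoothness of $Z(H)$ as an affine variety must be checked at \emph{all} points, including at infinity or at the singular locus of the normalization, but since $Z(H)$ is literally this affine hypersurface the Jacobian criterion over the algebraically closed field $k$ is decisive, so no further work is needed. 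Thus (iii) is the quickest of the three, reducing to Artin–Schreier plus the Jacobian criterion.
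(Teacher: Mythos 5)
The paper itself offers no argument here beyond ``proved by direct calculation in \cite[Chapter 3]{Noi}'' (and the authors' own working route, visible in their notes, is different from yours for part (i): after checking centrality of $x^{pr},y^{pr},h$ directly, they compare $Q(k[x^{pr},y^{pr},h])$ with $Q(Z)$ by a dimension/PI-degree count, observe that $k[x^{pr},y^{pr},h]$ is integrally closed because its associated graded ring is, and conclude $Z=k[x^{pr},y^{pr},h]$ from finiteness of $Z$ over it). Your graded-component strategy for (i) is viable, but as written it has a genuine misstep: a central element of positive $\mathcal{M}$-degree is \emph{not} ``a polynomial in $x$ times a group element'' --- $x^{pr}(\tau^p-\tau)$ is a counterexample. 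The correct statement is that the degree-$d$ component of $H$ equals $x^{d}H_0$, where $H_0=\bigoplus_j k[\tau]e_j$ is the degree-zero component; conjugation by $\gamma$ forces $r\mid d$ and $\mathrm{ad}(\tau)$ (which is multiplication by $d$ on the degree-$d$ component) forces $p\mid d$, so $x^{d}$ is central and, $H$ being prime, can be cancelled to reduce to degree zero. That degree-zero analysis, which you explicitly leave open, is where the content lies: writing a central $A=\sum_j P_j(\tau)e_j$, commutation with $x$ gives $P_j(T)=P_0(T+j)$ and $P_0(T+r)=P_0(T)$; since $r$ is invertible mod $p$, $P_0$ is invariant under translation by all of $\mathbb{F}_p$, hence $P_0\in k[T^p-T]$ and $A\in k[h]$. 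Without this, the reverse inclusion in (i) is not established. Note also that (i) as printed says $\tau$, but it must mean $h=\tau^p-\tau$ (as you implicitly assume), since $\tau$ is not central.

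For (ii), two points. First, you misquote Theorem \ref{centre}: $\mathrm{gr}\,Z(H)\cong(S(V)^{\operatorname{p}})^{\Gamma}=k[x^{pr},y^{pr},(xy)^p]$, not $S(V)^{\Gamma}=k[x^r,y^r,xy]$; with the correct graded ring (or with the Corollary in \S\ref{PIdeg}, which gives $Z$ free of rank $|\Gamma|=r$ over $Z_0$) the rank over $k[x^{pr},y^{pr}]$ is $r$, matching the hypersurface $XY=g(Z)$ with $\deg_Z g=r$, and then your surjection from a free rank-$r$ module onto a torsion-free rank-$r$ module does have zero kernel. Second, the identity $x^{pr}y^{pr}=\prod_{m=0}^{r-1}\bigl(h+\delta_m^p-\delta_m\bigr)$ with the paper's specific $\delta_m$ is precisely the ``direct calculation'' the paper delegates to \cite{Noi}; your normal-ordering sketch points in the right direction but does not carry it out, so the computational heart of (ii) remains unproved in your proposal. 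Part (iii) is correct and complete as you give it: the Jacobian criterion reduces smoothness of the affine hypersurface to $g$ having no repeated root, and Artin--Schreier converts $\delta_i^p-\delta_i=\delta_j^p-\delta_j$ into $\delta_i-\delta_j\in\mathbb{F}_p$, which is the intended reading of ``$\in\mathbb{Z}$'' (the prime subfield) in the statement.
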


\subsection{The Dunkl embedding} \label{Dunkl}  One reason why rational Cherednik algebras over $\mathbb{C}$ at $t=1$ are easier to
handle than arbitrary symplectic reflection algebras is that the
$\mathbb{C}-$algebra $H_{\mathbb{C}} :=
H_{1,\mathrm{\bf{c}}}(\mathfrak{h}\oplus
\mathfrak{h}^*,\omega,\Gamma)$ embeds in the skew group algebra
$\mathcal{D}(\mathfrak{h}^{\mathrm{reg}})\ast \Gamma$; indeed
$H_{\mathbb{C}}$ is birationally equivalent to the skew group
algebra  $\mathcal{D}(\mathfrak{h})\ast \Gamma$ over the Weyl
algebra $\mathcal{D}(\mathfrak{h}),$ \cite[Proposition 4.5]{EG}.

In fact, the same is true in positive characteristic, with
essentially the same proof. We keep the notation as in
(\ref{Cherdef}). In addition, for each pseudo-reflection $s \in S
\subseteq \Gamma$ choose $\alpha_s \in \mathfrak{h}^*$ whose kernel
is the hyperplane stabilized by $s$, and taking $\Gamma-$conjugates
as appropriate so that $\delta := \Pi_{s \in S} \alpha_s \in
S(\mathfrak{h}^*)^{\Gamma}.$ Write $\mathfrak{h}^{\mathrm{reg}}$ for
the regular points of $\mathfrak{h}$, so that
$\mathcal{D}(\mathfrak{h}^{\mathrm{reg}}),$ the algebra of
differential operators on $\mathfrak{h}^{\mathrm{reg}},$ is just
$\mathcal{D}(\mathfrak{h})[\delta]^{-1}$, the localisation of the
$n$th Weyl algebra $\mathcal{D}(\mathfrak{h}) = k[y_1, \ldots ,
y_n,\partial/\partial y_1, \ldots ,\partial/\partial y_n] $ at the
powers of $\delta .$

\begin{thrm} Let $H$ be a rational Cherednik algebra, with notation
as above. There are elements $\tau_1, \ldots , \tau_n$ of $k\Gamma$
such that the assignment $\gamma \mapsto \gamma,$ $y_i \mapsto y_i$
and $x_i \mapsto \partial/\partial y_i + \tau_i$ for $\gamma \in
\Gamma$ and $i = 1, \ldots , n$ extends to an injective algebra
homomorphism $\Theta_{\textbf{c}} : H \longrightarrow
\mathcal{D}(\mathfrak{h}^{\mathrm{reg}})\ast \Gamma.$
\end{thrm}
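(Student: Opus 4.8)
The plan is to mimic the construction of the Dunkl embedding in characteristic zero (\cite[Proposition 4.5]{EG}), checking at each step that the only inputs needed are the PBW theorem (\ref{PBW}) and the hypothesis (\ref{field}) on $k$, neither of which requires characteristic zero. First I would define, for each pseudo-reflection $s \in S$ with chosen linear form $\alpha_s$, the Dunkl operator: set $T_i := \partial/\partial y_i + \sum_{s \in S} c_s \frac{\langle y_i, \alpha_s\rangle}{\alpha_s}(s - 1) \in \mathcal{D}(\mathfrak{h}^{\mathrm{reg}}) \ast \Gamma$, where I must be slightly careful with the normalisation of the rank-$2$ forms $\omega_s$ appearing in (\ref{tight}) so that the constants match. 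The claim is that the assignment $y_i \mapsto y_i$, $\gamma \mapsto \gamma$, $x_i \mapsto T_i$ extends to an algebra map $\Theta_{\mathbf c}$. To see this one checks that the images satisfy the defining relations of $H$: the relations $\gamma y_i \gamma^{-1} = \gamma(y_i)$ are immediate; the relations $\gamma T_i \gamma^{-1} = $ (the corresponding combination of the $T_j$) follow from $\Gamma$-equivariance of the $\alpha_s$ and $\Gamma$-invariance of $\mathbf c$; the commutators $[y_i,y_j]=0$ are clear; and the essential computation is $[T_i, T_j] = 0$ and $[T_i, y_j] = $ the correct right-hand side of (\ref{tight}) (an element of $k\Gamma$). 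These are the classical Dunkl operator identities, and their proofs are purely formal manipulations with the simple poles along the hyperplanes $\alpha_s = 0$, valid over any $k$ satisfying (\ref{field}); the prime-to-$|\Gamma|$ hypothesis is what is needed to make sense of the averaging and to rule out characteristic obstructions in these identities.

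Having produced the homomorphism $\Theta_{\mathbf c}$, it remains to show it is injective. For this I would use the standard filtered-graded argument. Put the filtration $\{\mathcal{F}_i\}$ of (\ref{prop}) on $H$ and the order filtration (by order of differential operator, with $\Gamma$ and the localisation in degree $0$, and $y_i$ also in degree $0$ — or rather the Bernstein filtration adapted so that $y_i$ has degree $1$ as well) on $\mathcal{D}(\mathfrak{h}^{\mathrm{reg}}) \ast \Gamma$; with the right choice $\Theta_{\mathbf c}$ is a filtered map, since $T_i = \partial/\partial y_i + (\text{order } 0)$ has the same leading term as $\partial/\partial y_i$. The associated graded map is then the identity on $y_i$ and $\gamma$ and sends $x_i$ to the principal symbol $\partial/\partial y_i$; by the PBW theorem (\ref{PBW}), $\mathrm{gr}_{\mathcal F}(H) \cong S(\mathfrak h \oplus \mathfrak h^*) \ast \Gamma$, and the graded map is the obvious inclusion of this into $\mathrm{gr}$ of $\mathcal{D}(\mathfrak h^{\mathrm{reg}}) \ast \Gamma \cong \mathcal{O}(\mathfrak h^{\mathrm{reg}} \times \mathfrak h^*) \ast \Gamma$ (localising $S(\mathfrak h^*)$ at $\delta$), which is manifestly injective. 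Injectivity of $\mathrm{gr}(\Theta_{\mathbf c})$ forces injectivity of $\Theta_{\mathbf c}$.

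Finally I would identify the elements $\tau_i \in k\Gamma$ promised in the statement: comparing the formula for $T_i$ with $\partial/\partial y_i + \tau_i$ shows $\tau_i = \sum_{s \in S} c_s \frac{\langle y_i, \alpha_s\rangle}{\alpha_s}(s-1)$, which a priori lies in $\mathcal{D}(\mathfrak h^{\mathrm{reg}}) \ast \Gamma$ but in fact lies in $k\Gamma$ because each $s-1$ kills $\alpha_s$-divisibility obstructions — more precisely, for $s$ with reflecting hyperplane $\ker \alpha_s$, the operator $\frac{1}{\alpha_s}(s-1)$ applied to any polynomial produces a polynomial (the numerator $f - s(f)$ is divisible by $\alpha_s$), so after summing over $\Gamma$-orbits the coefficient of each $s$ is a rational function with no actual pole; but in the skew group algebra over $\mathcal{D}(\mathfrak h^{\mathrm{reg}})$ we should rather say directly that $\tau_i$ as written is an element of the degree-zero part, and absorbing it into $k\Gamma$ requires observing that its action factors through multiplication operators that are regular on all of $\mathfrak h$ — this bookkeeping is exactly as in \cite{EG} and I would simply cite that the same argument applies.

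The main obstacle I anticipate is not conceptual but a matter of normalisation: one must pin down precisely how the skew-symmetric forms $\omega_s$ in (\ref{tight}) relate to the naive Dunkl operator coefficients $c_s \frac{\langle y_i, \alpha_s \rangle}{\alpha_s}$, including the factor coming from the pairing between $\mathfrak h$ and $\mathfrak h^*$ and any factor of $2$ or of the eigenvalue of $s$ on $\mathfrak h/\ker(\mathrm{Id}-s)$ — getting this wrong would make $[T_i, y_j]$ land on the wrong element of $k\Gamma$. Since $2|\Gamma|$ is a unit in $k$ by (\ref{field}), none of these constants vanish or become undefined, so once the bookkeeping matches characteristic zero the verification of the relations $[T_i,T_j]=0$ and $[T_i,y_j] = \omega(x_i,y_j) - \sum_s c_s \omega_s(x_i,y_j)s$ goes through verbatim.
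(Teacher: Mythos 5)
Your approach is exactly the paper's: the paper's entire proof is the observation that the argument of \cite[Proposition 4.5]{EG} goes through verbatim in positive characteristic, the only inputs being the PBW theorem (\ref{PBW}) and the invertibility of $2|\Gamma|$, and your write-up (define the Dunkl operators, verify the relations (\ref{tight}), prove injectivity by a filtered-graded or localisation-at-$\delta$ argument) is just that argument spelled out. Your injectivity argument is fine once you make the choice you yourself flag, namely the Bernstein-type filtration in which $y_i$ and $\partial/\partial y_i$ both have degree $1$ (with the plain order filtration the symbol of $y_i$ dies and $\mathrm{gr}(\Theta_{\mathbf c})$ is not injective); alternatively one can argue, as the Remark after the theorem does, that $\Theta_{\mathbf c}$ becomes an isomorphism after inverting $\delta$ and that $\delta$ is a non-zero-divisor in $H$ by PBW.

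The one step that does not hold up is the last paragraph, where you try to force $\tau_i$ into $k\Gamma$. The fact that $\alpha_s^{-1}(s-1)$ sends polynomials to polynomials does not make it an element of $k\Gamma$: inside $\mathcal{D}(\mathfrak{h}^{\mathrm{reg}})\ast\Gamma$ the element $\tau_i=\sum_{s\in S}c_s\,\frac{\alpha_s(x_i)}{\alpha_s}\,(s-1)$ (up to the normalisation you discuss) genuinely has rational-function coefficients with poles along the reflecting hyperplanes, and it does not lie in $k\Gamma$ unless the correction vanishes. This is really a slip in the wording of the statement rather than in the mathematics: the $\tau_i$ should be read as elements of $k[\mathfrak{h}^{\mathrm{reg}}]\otimes k\Gamma$ (equivalently, as in \cite[Proposition 4.5]{EG}, elements of $\mathcal{D}(\mathfrak{h}^{\mathrm{reg}})\ast\Gamma$ of order zero), and with that reading nothing in your construction, your verification of the relations, or your injectivity argument needs to change; just delete the purported proof that $\tau_i\in k\Gamma$, since no correct argument for that literal claim exists.
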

\begin{proof} The proof of \cite[Proposition 4.5]{EG} works equally
well in positive characteristic.
\end{proof}

\noindent {\bf Remark:} In fact, the proof yields a stronger
statement: $\Theta_{\textbf{c}}$ becomes an isomorphism after
inverting $\delta$; that is, $H[\delta^{-1}] \cong
\mathcal{D}(\mathfrak{h}^{\mathrm{reg}})\ast \Gamma.$

\subsection{Goldie rank} \label{goldie} Recall \cite[Theorem 10.4.4]{MR} that the
\emph{Goldie} or \emph{uniform rank} $\mathrm{udim}_R(M)$ of a
module $M$ over the noetherian ring $R$ is the biggest number of
non-zero modules whose direct sum embeds into $M$, or infinity if no
such supremum exists. If $\mathcal{C}$ is an Ore set of
non-zero-divisors in $R$ and $M$ is $\mathcal{C}-$torsion free then
it is easy to check that \begin{eqnarray} \mathrm{udim}_R(M) =
\mathrm{udim}_{R\mathcal{C}^{-1}}(M\otimes_R R\mathcal{C}^{-1}),
\label{udim} \end{eqnarray} \cite[proof of Lemma 10.2.13]{MR}. Apply
this in particular when $R$ is a prime noetherian ring,
$\mathcal{C}$ is the set of \emph{all} non-zero-divisors in $R$, and
$M=R$. In this case $\mathcal{C}$ is an Ore set and
$R\mathcal{C}^{-1}$ is the simple artinian quotient ring $Q(R)$ of
$R$, by Goldie's theorem, \cite[Theorem 10.4.10]{MR}. Thus $Q(R)
\cong M_s(D)$ for a division ring $D$ by the Artin-Wedderburn
theorem, and (\ref{udim}) shows that the integer $s$ is the Goldie
rank of $R$.

\begin{thrm} Let $H = H_{1,\mathbf{c}}(\mathfrak{h},\omega,\Gamma)$ be a
rational Cherednik algebra.
\begin{enumerate} \item $\mathrm{udim}(H) = |\Gamma|.$
\item The integer $s$ appearing in Theorem \ref{return} is the
Goldie rank of $H$, and so equals $|\Gamma|$.\end{enumerate}
\end{thrm}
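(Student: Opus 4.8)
The plan is to prove (i) by computing the Goldie quotient ring of $H$ via the Dunkl embedding of $\S$\ref{Dunkl}, and to deduce (ii) almost formally from (i) together with the fact, from Theorem \ref{spherprop}(ii), that $eHe$ is a domain. Recall from the discussion preceding the theorem that $\mathrm{udim}(H)$ is precisely the Goldie rank of $H$, i.e. the integer $s$ with $Q(H)\cong M_s(\Delta)$ for a division ring $\Delta$ (using that $H$ is prime noetherian, Theorem \ref{prop}(i)); so it suffices to identify $Q(H)$ and read off the matrix size.

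First I would observe that $\delta:=\prod_{s\in S}\alpha_s$ is a non-zero-divisor of $H$: its image under the Dunkl embedding $\Theta_{\mathbf{c}}$ of Theorem \ref{Dunkl} is $\delta\in\mathcal{D}(\mathfrak{h}^{\mathrm{reg}})\subseteq\mathcal{D}(\mathfrak{h}^{\mathrm{reg}})\ast\Gamma$, which is a unit there, so $\delta h=0$ or $h\delta=0$ forces $\Theta_{\mathbf{c}}(h)=0$ and hence $h=0$. By the Remark after Theorem \ref{Dunkl}, $\mathcal{D}(\mathfrak{h}^{\mathrm{reg}})\ast\Gamma=H[\delta^{-1}]$ is therefore a localisation of $H$ at an Ore set of non-zero-divisors, so it has the same quotient ring as $H$; and it is in turn the localisation at $\{\delta^{i}\}$ of $\mathcal{D}(\mathfrak{h})\ast\Gamma=H_{1,\mathbf{0}}$. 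Hence
$$Q(H)=Q\bigl(\mathcal{D}(\mathfrak{h})\ast\Gamma\bigr)\cong Q\bigl(\mathcal{D}(\mathfrak{h})\bigr)\ast\Gamma=:D\ast\Gamma,$$
the last isomorphism being the standard fact (a Maschke-type argument, $|\Gamma|$ being invertible in $k$) that forming the Goldie quotient ring commutes with passing to the skew group algebra by a finite group of invertible order. Here $D=Q(\mathcal{D}(\mathfrak{h}))$ is a division ring, since the Weyl algebra $\mathcal{D}(\mathfrak{h})$ is a noetherian domain, hence Ore.

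It remains to show that the Goldie rank of $D\ast\Gamma$ is $|\Gamma|$. As a left (and right) $D$-module $D\ast\Gamma$ is free of rank $|\Gamma|$ with basis $\Gamma$; by Maschke's theorem it is semisimple artinian, and since it equals the quotient ring of the prime algebra $H$ it is in fact simple artinian, say $D\ast\Gamma\cong M_{s}(\Delta)$. The left $D$-submodules of $D$ are only $0$ and $D$, so $D$ with its natural left $D\ast\Gamma$-action is a simple $D\ast\Gamma$-module, free of rank $1$ over $D$; being simple artinian, $D\ast\Gamma$ has this as its unique simple module and $D\ast\Gamma\cong D^{(s)}$ as left $D\ast\Gamma$-modules. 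Restricting this isomorphism to the left $D$-action and comparing ranks over the division ring $D$ gives $|\Gamma|=s$. Thus $\mathrm{udim}(H)=|\Gamma|$, proving (i).

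For (ii), suppose we are in the situation of Theorem \ref{return}(vi), so $H_{\mathfrak{p}}\cong M_{s}(eH_{\mathfrak{p}}e)$. Since $H_{\mathfrak{p}}$ is a localisation of $H$ at the central non-zero-divisors $Z\setminus\mathfrak{p}$ we have $eHe\subseteq eH_{\mathfrak{p}}e\subseteq eQ(H)e=Q(eHe)$; as $eHe$ is a noetherian domain, $Q(eHe)$ is a division ring, so $eH_{\mathfrak{p}}e$ is a noetherian domain with $Q(eH_{\mathfrak{p}}e)=Q(eHe)$. Therefore
$$Q(H)=Q(H_{\mathfrak{p}})=Q\bigl(M_{s}(eH_{\mathfrak{p}}e)\bigr)=M_{s}\bigl(Q(eHe)\bigr),$$
which exhibits the Goldie rank of $H$ as $s$; by (i), $s=|\Gamma|$. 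The step I expect to be the main obstacle is pinning down that the Goldie rank of $D\ast\Gamma$ is exactly $|\Gamma|$, not merely a divisor of it; this rests on $\mathcal{D}(\mathfrak{h})$ being a \emph{domain} (so $D$ is a division ring) and on $H$ being prime (so $D\ast\Gamma$ is simple and its unique simple module has $D$-rank $1$). Two minor technical checks are that the $\{\delta^{i}\}$-localisation of $H$ is two-sided Ore, which is part of the content of the Remark after Theorem \ref{Dunkl}, and that $Q(\mathcal{D}(\mathfrak{h})\ast\Gamma)=Q(\mathcal{D}(\mathfrak{h}))\ast\Gamma$; alternatively the identity $\mathrm{udim}(H)=\mathrm{udim}(\mathcal{D}(\mathfrak{h}^{\mathrm{reg}})\ast\Gamma)$ can be obtained directly from (\ref{udim}).
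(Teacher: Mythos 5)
Your argument is correct, and for part (i) it diverges from the paper at exactly one point: the paper, after the same reduction via the Dunkl embedding of (\ref{Dunkl}) and the invariance of Goldie rank under Ore localisation (\ref{udim}), disposes of the remaining claim $\mathrm{udim}(\mathcal{D}(\mathfrak{h})\ast\Gamma)=|\Gamma|$ by citing a special case of Moody's theorem \cite[Theorem 37.14]{Pass}, whereas you prove it directly. Your replacement is a genuinely more elementary (and self-contained) route: since $\mathcal{D}(\mathfrak{h})$ is a noetherian domain whose nonzero elements form a $\Gamma$-stable Ore set, $Q(\mathcal{D}(\mathfrak{h})\ast\Gamma)\cong D\ast\Gamma$ with $D$ a division ring; Maschke gives semisimplicity, primeness of $H$ (equivalently of $H_{1,\mathbf{0}}$) gives simplicity, and the rank count over $D$ using the natural simple module $D$ (action $(d\gamma)\cdot x=d\gamma(x)$, which restricts to the regular $D$-module) pins the matrix size at exactly $|\Gamma|$. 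What the paper's citation buys is brevity and a statement valid in far greater generality (Moody's theorem does not need the coefficient ring to be a domain); what your argument buys is transparency, at the price of exploiting the special structure here (domain coefficients, $p\nmid|\Gamma|$, primeness of the skew group algebra) and of the routine Ore/regularity checks you flag, all of which do go through ($\delta^p$ is central, so localisation at powers of $\delta$ is unproblematic, and $\delta$ is regular by injectivity of $\Theta_{\mathbf{c}}$). For part (ii) your proof is essentially the paper's: the paper fixes $\mathfrak{p}$ in the Azumaya locus, invokes Theorem \ref{return}(ii) and the fact that $eH_{\mathfrak{p}}e$ is a domain (Theorem \ref{base}(ii)) to identify $s$ with the Goldie rank; your passage through $Q(H)=Q(H_{\mathfrak{p}})=M_s(Q(eHe))$ is the same computation written out, and stating it for any prime satisfying \ref{return}(vi) is if anything slightly sharper, in the spirit of Remark (ii) following Theorem \ref{return}.
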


\begin{proof} Fix a prime $\mathfrak{p}$ of $Z(H)$ which is in the Azumaya locus. Then
the isomorphism of Theorem \ref{return}(ii) holds, and since
$eH_{\mathfrak{p}}e$ is a domain by Theorem (\ref{base})(ii), the
first claim in (ii) follows. Thus it remains to prove that
$\mathrm{udim}(H) = |\Gamma|.$ Since the Goldie rank of an algebra
is unaltered by inverting an Ore set of non-zero-divisors, by
(\ref{udim}), in view of Theorem (\ref{Dunkl}) we only need to show
that
$$\mathrm{udim}(\mathcal{D}(\mathfrak{h})\ast \Gamma) = |\Gamma |.
$$
This follows from a special case of Moody's theorem, \cite[Theorem
37.14]{Pass}.
\end{proof}

\noindent {\bf Remark:} Presumably Theorem \ref{goldie}(i) is true
for all symplectic reflection algebras over all fields. Suppose
first that $K$ is algebraically closed of characteristic 0. Then the
Dunkl embedding can be used as above to deal with rational Cherednik
$K-$algebras when $t=1.$ Secondly, if $K$ has \emph{any}
characteristic and $H$ is \emph{any} symplectic reflection algebra
with $t=0,$ then $eHe$ is commutative by \cite[Theorem 3.1]{EG}, so
the integer $s$ of Theorem \ref{return}(ii) is equal to the
PI-degree of $H$. But the latter is equal to the dimension over the
algebraically closed field $k$ of a generic irreducible
representation of $H$, by \cite[Theorem III.1.6 and Lemma
III.1.2]{BG}, and this is known to be $|\Gamma|$ by \cite[Theorem
1.7(iv)]{EG}.

Thus the only case remaining open is that of a symplectic reflection
algebra which is not Cherednik, with $t=1$.

\section{Cherednik algebras - representation theory} \label{Cherrep}

\subsection{PI-degree and centre} \label{PIdeg} When $k$ has characteristic 0
and $t=0,$ the PI-degree of a symplectic reflection algebra
$H_{0,\mathbf{c}}$ is $|\Gamma|$, \cite[Theorem 1.7(iv)]{EG};
indeed the irreducible $H_{0,\mathbf{c}}-$modules in the Azumaya
locus are isomorphic as $k\Gamma-$modules to $k\Gamma.$ The same
conclusions remain valid when $t=0$ and $k$ has positive
characteristic, with essentially the same proofs. It remains to
consider the case $t=1$ when $k$ has positive characteristic; here
we deal with the Cherednik algebras. For this we need the
following two well-known facts:

\begin{lem} \label{weyl}\begin{enumerate}
\item The centre of the Weyl algebra $\mathcal{D}(\mathfrak{h}) =
k\langle x_1, \ldots , x_n, y_1, \ldots , y_n \rangle$ is $k\langle
x_1^p, \ldots , x_n^p, y_1^p, \ldots , y_n^p \rangle$; indeed
$\mathcal{D}(\mathfrak{h}) $ is Azumaya over\\ $k\langle x_1^p,
\ldots , x_n^p, y_1^p, \ldots , y_n^p \rangle$.
\item Let $R$ be a domain and let $G$ be a finite group of
automorphisms of $R$ which acts faithfully on $Z(R).$  Then the
centre of the skew group ring $R \ast G$ is  $Z(R)^{G}.$
\end{enumerate}
\end{lem}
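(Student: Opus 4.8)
The plan is to treat the two parts independently; both are folklore, and part (i) reduces to a classical computation for the ordinary Weyl algebra. \emph{Part (i).} First I reduce to $n=1$: one has $\mathcal{D}(\mathfrak{h}) \cong \mathcal{D}(\mathbb{A}^1)^{\otimes n}$, and over a field the centre of a tensor product of algebras is the tensor product of the centres, so it is enough to treat $D := \mathcal{D}(\mathbb{A}^1) = k\langle x, y \mid [y,x] = 1 \rangle$. Since $\mathrm{ad}(y)$ and $\mathrm{ad}(x)$ are derivations and $[y,x]=1$ is central, $\mathrm{ad}(y)(x^p) = p\,x^{p-1} = 0$ and similarly $\mathrm{ad}(x)(y^p) = 0$, so $C_1 := k[x^p, y^p] \subseteq Z(D)$; moreover the standard filtered-graded argument (with $\mathrm{gr}\,D = k[x,y]$) shows $D$ is a domain and, partitioning the PBW basis $\{x^m y^l\}$ according to exponents modulo $p$, that $D$ is a free $C_1$-module on $\{x^i y^j : 0 \le i,j \le p-1\}$. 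Writing a central $z$ in this basis as $z = \sum_{0 \le i,j \le p-1} f_{ij}(x^p,y^p)\,x^i y^j$ and using $[y^j, x] = j y^{j-1}$ and $[x^i, y] = -i x^{i-1}$, the relations $[z,x] = 0 = [z,y]$ force (by freeness over the domain $C_1$) $f_{ij} = 0$ whenever $j \ne 0$, and then whenever $i \ne 0$; hence $z \in C_1$, so $Z(D) = C_1$, and therefore $Z(\mathcal{D}(\mathfrak{h})) = k\langle x_1^p, \ldots, x_n^p, y_1^p, \ldots, y_n^p\rangle$.

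For the Azumaya assertion, $\mathcal{D}(\mathfrak{h})$ is prime and free of rank $p^{2n}$ over its affine centre $C := k\langle x_1^p, \ldots, y_n^p \rangle$, so by \cite[Theorem III.1.6]{BGb} it is Azumaya over $C$ provided all its irreducible modules have a common $k$-dimension. Every irreducible $\mathcal{D}(\mathfrak{h})$-module factors through a closed fibre $\mathcal{D}(\mathfrak{h})/\mathfrak{m}\,\mathcal{D}(\mathfrak{h})$, $\mathfrak{m}$ a maximal ideal of $C$; this is a $p^{2n}$-dimensional $k$-algebra, and by the tensor decomposition it is $\bigotimes_{i=1}^n D_{a_i,b_i}$ with $D_{a,b} = k\langle x,y \mid x^p = a,\ y^p = b,\ [y,x]=1\rangle$. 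Now $D_{a,b} \cong M_p(k)$, since it acts faithfully and irreducibly on $k[u]/(u^p)$ via $x \mapsto (\text{multiplication by } u + \lambda)$ and $y \mapsto d/du + \beta$, where $\lambda, \beta \in k$ are chosen with $\lambda^p = a$ and $\beta^p = b$ (irreducibility because a non-zero submodule is stable under $k[u]$ and under $d/du$). Hence every irreducible $\mathcal{D}(\mathfrak{h})$-module has dimension $p^n$, and $\mathcal{D}(\mathfrak{h})$ is Azumaya over $C$ with PI-degree $p^n$.

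\emph{Part (ii).} I argue directly in $R \ast G$, where $gr = g(r)g$ for $r \in R$, $g \in G$. Let $z = \sum_{g \in G} r_g g$ be central. Comparing $zc$ with $cz$ for $c \in Z(R)$ yields $r_g(c - g(c)) = 0$ for every $g$; given $g \ne 1$, faithfulness of the $G$-action on $Z(R)$ provides $c \in Z(R)$ with $g(c) \ne c$, and since $R$ is a domain this forces $r_g = 0$. Thus $z = r_1 \in R$; commuting with all of $R$ forces $r_1 \in Z(R)$, and commuting with every $h \in G$ gives $h(r_1) = r_1$, so $r_1 \in Z(R)^G$. The reverse inclusion $Z(R)^G \subseteq Z(R \ast G)$ is immediate.

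The only non-formal point is the identification of the closed fibres of $\mathcal{D}(\mathfrak{h})$ with matrix algebras in part (i) — i.e.\ exhibiting the irreducible module over $D_{a,b}$ — while everything else is routine bookkeeping. This input is entirely classical, and one could instead simply invoke the characteristic-$p$ theory of the Weyl algebra in the literature.
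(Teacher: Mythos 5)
Your proofs of both parts are correct, but they take a different route from the paper only in the sense that the paper gives no argument at all: for (i) it simply cites Revoy's paper \cite{Rev} on Weyl algebras in characteristic $p$, and (ii) is dismissed as a straightforward exercise. What you have written is a self-contained replacement. For (i), the reduction to $n=1$, the freeness of $D$ over $k[x^p,y^p]$ on the $p^2$ monomials $x^iy^j$, and the coefficient-by-coefficient computation with $[y^j,x]=jy^{j-1}$, $[x^i,y]=-ix^{i-1}$ correctly pin down the centre; for the Azumaya statement you use exactly the criterion the paper itself invokes elsewhere (\cite[Theorem III.1.6]{BGb}), and your explicit model of the fibre $D_{a,b}$ acting on $k[u]/(u^p)$ via $x\mapsto u+\lambda$, $y\mapsto d/du+\beta$ is sound: in characteristic $p$ one has $(u+\lambda)^p\equiv\lambda^p=a$ on $k[u]/(u^p)$ and $(d/du+\beta)^p=(d/du)^p+\beta^p=b$, irreducibility is clear, and faithfulness follows from Jacobson density together with $\dim_k D_{a,b}\le p^2$, giving $D_{a,b}\cong M_p(k)$ and hence constant dimension $p^n$ for all irreducibles. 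Part (ii) is the standard support argument and is exactly the ``exercise'' the paper has in mind; faithfulness of the $G$-action on $Z(R)$ and the domain hypothesis are used precisely where they are needed. The trade-off is the obvious one: the paper's citation is shorter and leans on Revoy's classical result, while your argument makes the lemma independent of that reference at the cost of a page of routine, but carefully executed, bookkeeping.
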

\begin{proof} (i) is proved in \cite{Rev} and (ii) is a straightforward exercise.
\end{proof}

\begin{thrm} Let
$H=H_{1,\mathbf{c}}(\mathfrak{h}\oplus
\mathfrak{h}^*,\omega,\Gamma)$ be a rational Cherednik algebra
over an algebraically closed field $k$ of positive characteristic
$p.$ Let $\mathfrak{h}$ have dimension $n.$ Then
$$ \mathrm{PI-degree}(H) = p^{n}|\Gamma|. $$
\end{thrm}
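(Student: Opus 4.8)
The plan is to compute the PI-degree by exploiting the Dunkl embedding of Theorem~\ref{Dunkl}, which becomes an isomorphism after inverting $\delta$. Since $\delta \in S(\mathfrak{h}^*)^{\Gamma} \subseteq Z(H)$ is a non-zero central element in the prime noetherian ring $H$, inverting it does not change the simple artinian quotient ring, hence does not change the PI-degree: $\mathrm{PI\text{-}degree}(H) = \mathrm{PI\text{-}degree}(H[\delta^{-1}]) = \mathrm{PI\text{-}degree}(\mathcal{D}(\mathfrak{h}^{\mathrm{reg}})\ast \Gamma)$. For the same reason we may further invert the central element $\delta^p$ (already inverted) and work with $\mathcal{D}(\mathfrak{h})\ast\Gamma$ up to localization; more directly, $\mathcal{D}(\mathfrak{h}^{\mathrm{reg}})\ast\Gamma = (\mathcal{D}(\mathfrak{h})\ast\Gamma)[\delta^{-p}]$ is again a central localization of $\mathcal{D}(\mathfrak{h})\ast\Gamma$, so it suffices to compute $\mathrm{PI\text{-}degree}(\mathcal{D}(\mathfrak{h})\ast\Gamma)$.

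Next I would identify the centre and a suitable Azumaya situation for $\mathcal{D}(\mathfrak{h})\ast\Gamma$. By Lemma~\ref{weyl}(i), $\mathcal{D}(\mathfrak{h})$ is Azumaya of rank $p^{2n}$ over its centre $C := k\langle x_1^p,\ldots,x_n^p,y_1^p,\ldots,y_n^p\rangle \cong S(V)^{\operatorname{p}}$, on which $\Gamma$ acts (this action being faithful since $\Gamma \hookrightarrow \mathrm{Sp}(V)$ acts faithfully on $S(V)$ and the Frobenius is $\Gamma$-equivariant). By Lemma~\ref{weyl}(ii), the centre of $\mathcal{D}(\mathfrak{h})\ast\Gamma$ is $C^{\Gamma} \cong (S(V)^{\operatorname{p}})^{\Gamma}$. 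The PI-degree is the dimension over $k$ of a generic irreducible representation, equivalently it can be read off after passing to the generic point: localize at $Z \setminus \{0\}$, i.e. tensor over $C^{\Gamma}$ with the fraction field $F := Q(C^{\Gamma})$. Over $F$, the algebra $(\mathcal{D}(\mathfrak{h})\ast\Gamma)\otimes_{C^{\Gamma}}F$ is a central simple $F$-algebra whose square root of dimension over $F$ is the PI-degree.

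The key computation is then a dimension count. Write $L := Q(C)$, a field extension of $F = L^{\Gamma}$ of degree $|\Gamma|$ (Galois, as $\Gamma$ acts faithfully). The Weyl algebra gives $\mathcal{D}(\mathfrak{h})\otimes_C L$ a central simple $L$-algebra of dimension $p^{2n}$ over $L$. Forming the crossed product, $(\mathcal{D}(\mathfrak{h})\ast\Gamma)\otimes_{C^{\Gamma}}F \cong (\mathcal{D}(\mathfrak{h})\otimes_C L)\ast\Gamma$, which has dimension $p^{2n}\cdot|\Gamma|$ over $L$, hence $p^{2n}|\Gamma|^2$ over $F$; its PI-degree (equal to that of $\mathcal{D}(\mathfrak{h})\ast\Gamma$, and hence of $H$) is therefore at most $\sqrt{p^{2n}|\Gamma|^2}=p^n|\Gamma|$. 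For the reverse inequality I would invoke the theorem's own earlier part: Theorem~\ref{return}(vi), combined with the Goldie-rank computation of Theorem~\ref{goldie}, gives at an Azumaya prime $\mathfrak{p}$ an isomorphism $H_{\mathfrak{p}} \cong M_{|\Gamma|}(eH_{\mathfrak{p}}e)$ with $eH_{\mathfrak{p}}e$ local; since $eHe$ has $\mathrm{PI\text{-}degree}$ equal to $\mathrm{PI\text{-}degree}(S(V)^{\Gamma}$-generic fibre of $eHe)$, and one computes from $\mathrm{gr}(eHe)\cong S(V)^{\Gamma}$ and the Dunkl picture that $\mathrm{PI\text{-}degree}(eHe)=p^n$, we get $\mathrm{PI\text{-}degree}(H)=|\Gamma|\cdot p^n$. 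Alternatively, and more cleanly, one bounds below directly: the generic fibre $(\mathcal{D}(\mathfrak{h})\otimes_C L)\ast\Gamma$ is a crossed product of the central simple $L$-algebra $\mathcal{D}(\mathfrak{h})\otimes_C L$ (which is $M_{p^n}(L)$ since the Weyl algebra at the generic point splits) with the Galois group $\Gamma$ acting on $L/F$, and a crossed product $M_{p^n}(L)\ast\Gamma$ over the Galois extension $L/F$ is Morita equivalent to the crossed product $L\ast\Gamma \cong M_{|\Gamma|}(F)$ tensored up, so it is $\cong M_{p^n}(L\ast\Gamma) \cong M_{p^n|\Gamma|}(F)$; thus it is split of degree exactly $p^n|\Gamma|$.

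The main obstacle is making the crossed-product identification at the generic point fully rigorous: one must check that $\mathcal{D}(\mathfrak{h})\otimes_C L \cong M_{p^n}(L)$ genuinely splits (this follows since an Azumaya algebra of rank $p^{2n}$ over a field is $M_{p^n}$ of that field only if its Brauer class is trivial — here one uses that the Weyl algebra's class is trivial because $\mathcal{D}(\mathfrak{h})\otimes_C L$ contains the commutative étale subalgebra $k(x_1,\ldots,x_n)\otimes_{k(x_1^p,\ldots)}L$ of the right degree, or simply that $\mathcal{D}(\mathfrak{h})$ already has an irreducible module of dimension $p^n$ over $C/\mathfrak{m}$ for every maximal ideal), and that the $\Gamma$-action on $M_{p^n}(L)$ is, up to inner automorphisms, the one induced from $L/F$, so that the crossed product collapses as claimed. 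Once that is in hand the dimension count is routine and both bounds meet at $p^n|\Gamma|$.
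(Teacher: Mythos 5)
Your route is essentially the paper's: reduce to $\mathcal{D}(\mathfrak{h})\ast\Gamma$ via the Dunkl embedding (PI-degree is unchanged by inverting central nonzerodivisors), identify the centre as $k\langle x_1^p,\ldots,x_n^p,y_1^p,\ldots,y_n^p\rangle^{\Gamma}$ using Lemma \ref{weyl}, and compute that the algebra has generic rank $p^{2n}|\Gamma|^2$ over this centre; this is exactly the proof in \S\ref{PIdeg}. The only place you make unnecessary heavy weather is the ``reverse inequality''. Since $\mathcal{D}(\mathfrak{h})\ast\Gamma$ is prime and a finite module over its centre $Z$, Posner's theorem shows that $Q(Z)\otimes_Z(\mathcal{D}(\mathfrak{h})\ast\Gamma)$ is already the simple artinian quotient ring and is central simple over $Q(Z)$; hence the PI-degree equals the square root of the $Q(Z)$-dimension you computed, whether or not that central simple algebra splits. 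So neither of your two patches is needed --- which is fortunate, because both are shaky as written: the assertion that $M_{p^n}(L)\ast\Gamma$ collapses to $M_{p^n|\Gamma|}(F)$ when the $\Gamma$-action lifts the Galois action only ``up to inner automorphisms'' is not automatic (the obstruction to such a collapse is a Brauer class, and untwisted crossed products over a split Azumaya algebra need not split without a descent/semilinearity argument), while the alternative via Theorem \ref{return}(vi) quietly assumes that the PI-degree of $eHe$ is $p^n$, which is essentially the statement being proved. With the Posner observation in place, your dimension count finishes the proof exactly as the paper does.
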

\begin{proof} By Theorem and Remark \ref{Dunkl}, $H$ and $\mathcal{D}(\mathfrak{h})\ast
\Gamma$ become isomorphic after inverting certain central
elements, so it is enough, by \cite[Lemma 10.2.13]{MR}, to prove
that $\mathcal{D}(\mathfrak{h})\ast \Gamma$ has PI-degree
$p^{n}|\Gamma|.$ From the lemma we deduce that
$$ Z:= Z(\mathcal{D}(\mathfrak{h})\ast \Gamma)\quad = \quad k\langle x_1^p, \ldots , x_n^p, y_1^p, \ldots , y_n^p
\rangle^{\Gamma}.$$ Now, inverting the non-zero elements of the
centre in $\mathcal{D}(\mathfrak{h})\ast \Gamma$ and calculating
that
$$\mathrm{dim}_{Q(Z)}(Q(Z) \otimes_Z \mathcal{D}(\mathfrak{h})\ast
\Gamma)\quad = \quad p^{2n}|\Gamma|^2,$$ we obtain the desired
conclusion.
\end{proof}

Presumably Theorem \ref{PIdeg} is true for all symplectic reflection
algebras with $t=1$ over a characteristic $p$ field; we leave this
as an open question.

\begin{cor} Let
$H=H_{1,\mathbf{c}}(\mathfrak{h}\oplus
\mathfrak{h}^*,\omega,\Gamma)$ be a rational Cherednik algebra
over an algebraically closed field $k$ of positive characteristic
$p.$  Then $Z(H)$ is a free module of rank $|\Gamma|$ over its
polynomial subalgebra $\\ Z_0 = (S(\mathfrak{h}^p)^{\Gamma}
\otimes (S(\mathfrak{h^*}^p)^{\Gamma}.$
\end{cor}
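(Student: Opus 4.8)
The plan is to combine the rank count for $H$ over $Z_0$ from the Remark following Proposition \ref{invar} with the PI-degree computation of Theorem \ref{PIdeg} and the fact that $H$ is a finite module over its centre $Z(H)$ (Theorem \ref{centre}(ii)(a)). First I would recall from (\ref{free}) that $H$ is a free $Z_0$-module of rank $p^{2n}|\Gamma|^3$, and that $Z_0$ is a polynomial algebra in $2n$ variables, hence a regular (in particular integrally closed, Cohen-Macaulay) noetherian domain. Since $Z(H)$ is a $Z_0$-submodule of the free, hence torsion-free, $Z_0$-module $H$, the algebra $Z(H)$ is a finitely generated torsion-free $Z_0$-module.

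Next I would pin down the rank of $Z(H)$ as a $Z_0$-module by passing to the fraction field $F := Q(Z_0)$. Tensoring the inclusions $Z_0 \subseteq Z(H) \subseteq H$ with $F$ over $Z_0$, one gets $F \subseteq F \otimes_{Z_0} Z(H) \subseteq F \otimes_{Z_0} H =: H_F$, where $H_F$ is an $F$-algebra of dimension $p^{2n}|\Gamma|^3$, finite over its centre $F \otimes_{Z_0} Z(H)$ (centrality is preserved under this central base change since $Z_0$ is central). By Theorem \ref{PIdeg} the PI-degree of $H$ is $p^n|\Gamma|$, and the PI-degree is unchanged by central localisation, so $H_F$ is a central simple — or at least PI — algebra whose simple artinian quotient is $M_{p^n|\Gamma|}(D)$ for a division algebra $D$; comparing $F$-dimensions, $\dim_F H_F = p^{2n}|\Gamma|^3 = (p^n|\Gamma|)^2 \cdot \dim_F(\text{centre of } H_F)$ forces $\dim_F(F\otimes_{Z_0}Z(H)) = |\Gamma|$. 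Hence $Z(H)$ has rank $|\Gamma|$ as a $Z_0$-module.

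It remains to upgrade "finitely generated torsion-free of rank $|\Gamma|$" to "free of rank $|\Gamma|$." For this I would invoke the graded structure: by Theorem \ref{centre}(ii)(a), $\operatorname{gr}_{\mathcal{F}}(Z(H)) \cong (S(V)^{\operatorname{p}})^{\Gamma}$, and the filtration on $Z_0$ is the one induced from $H$, with $\operatorname{gr}(Z_0) = (S(\mathfrak{h})^{\operatorname{p}})^{\Gamma} \otimes (S(\mathfrak{h}^*)^{\operatorname{p}})^{\Gamma}$. By classical invariant theory (as quoted in the Remark after Proposition \ref{invar}), $S(\mathfrak{h})$ is free over $(S(\mathfrak{h})^{\operatorname{p}})^{\Gamma}$ and likewise for $\mathfrak{h}^*$, so $S(V)^{\Gamma} = (S(\mathfrak{h}) \otimes S(\mathfrak{h}^*))^{\Gamma}$ is a Cohen-Macaulay module over the polynomial ring $\operatorname{gr}(Z_0)$; being Cohen-Macaulay over a polynomial ring it is free, by the graded Auslander–Buchsbaum theorem (or directly Hironaka's criterion). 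Therefore $\operatorname{gr}_{\mathcal{F}}(Z(H))$ is a free $\operatorname{gr}(Z_0)$-module, and a standard filtered-graded lifting argument then shows $Z(H)$ is a free $Z_0$-module; its rank must be the rank $|\Gamma|$ computed above.

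The main obstacle is the last step — verifying cleanly that $\operatorname{gr}(Z(H)) \cong S(V)^{\Gamma}$ really is free over the polynomial subring $\operatorname{gr}(Z_0)$, and that freeness lifts from $\operatorname{gr}$ to $Z(H)$ itself. The freeness of $\operatorname{gr}(Z(H))$ is essentially the graded Cohen-Macaulay/Auslander-Buchsbaum statement applied to the invariant ring over the Frobenius-twisted invariants, and one must be a little careful that $\operatorname{gr}(Z_0)$ is genuinely a polynomial ring in $2n$ variables (which it is, by the Shepherd-Todd-Chevalley theorem as already noted) and that the grading is the one compatible with the filtration. Granting that, the lifting is routine: choose homogeneous lifts of a homogeneous $\operatorname{gr}(Z_0)$-basis of $\operatorname{gr}(Z(H))$; they generate $Z(H)$ over $Z_0$ and are $Z_0$-independent by the usual symbol/leading-term argument.
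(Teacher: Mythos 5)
Your proposal is correct, and its rank computation is essentially the paper's: both compare $\dim_{Q(Z_0)}(Q(Z_0)\otimes_{Z_0}H)=p^{2n}|\Gamma|^3$, coming from the freeness of $H$ over $Z_0$ noted after Proposition \ref{invar}, with the dimension $p^{2n}|\Gamma|^2$ of $Q(H)$ over $Q(Z)$ supplied by the PI-degree of Theorem \ref{PIdeg}, to get $\dim_{Q(Z_0)}Q(Z)=|\Gamma|$. Where you genuinely diverge is the freeness step. The paper argues directly at the level of $Z(H)$: by Theorem \ref{centre}(ii)(b) the centre is Gorenstein, hence Cohen--Macaulay, and a finitely generated Cohen--Macaulay ring which is a finite module over the polynomial subalgebra $Z_0$ is automatically free, by \cite[Corollary 18.17]{Eis}. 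You instead work at the associated graded level: $\gr Z(H)\cong (S(V)^{\operatorname{p}})^{\Gamma}$ is Cohen--Macaulay (Hochster--Eagon, or more simply because it is a graded direct summand of $S(V)$ as $p\nmid|\Gamma|$) and finite over the polynomial ring $\gr Z_0$, hence graded free, and you then lift freeness through the filtration by the standard symbol argument (generation from graded generation, independence because the symbols $\sigma(z_i)\sigma(m_i)$ cannot cancel, $\gr Z$ being a domain). Both routes rest on the same graded input -- the paper's Gorenstein statement for $Z(H)$ was itself obtained by lifting Watanabe's theorem for $S(V)^{\Gamma}$ through the filtration -- so your proof is not shorter, but it has the mild advantage of bypassing the ungraded Gorenstein/Cohen--Macaulay property of $Z(H)$ and the appeal to Eisenbud, at the cost of having to identify $\gr Z_0$ with $(S(\mathfrak{h})^{\operatorname{p}})^{\Gamma}\otimes(S(\mathfrak{h}^*)^{\operatorname{p}})^{\Gamma}$ inside $\gr Z(H)$ (which does follow from the PBW theorem) and of writing out the filtered-to-graded transfer of freeness explicitly.
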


\begin{proof} Let $\mathfrak{h}$ have dimension
$n.$ By Theorem \ref{centre}(iv), $Z(H)$ is Gorenstein, and
therefore Cohen-Macaulay. Thus, by \cite[Corollary 18.17]{Eis}, it
is free over its polynomial subalgebra $Z_0$. The rank is determined
by comparing $\mathrm{dim}_{Q(Z_0)}(Q(Z_0)\otimes H)$ with
$\mathrm{dim}_{Q(Z)}(Q(Z_0)\otimes H);$ the first of these is
$p^{2n}|\Gamma|^3$ by (\ref{free}), while the second is
$p^{2n}|\Gamma|^2$ by Theorem \ref{PIdeg}.
\end{proof}

\subsection{$\Gamma-$regularity of the generic irreducible
modules} \label{regular} It follows from Theorem \ref{PIdeg} that
the maximal $k-$dimension of the irreducible $H-$modules is
$p^n|\Gamma|$. By the structure theory of noetherian PI-rings, this
dimension is achieved precisely by those irreducible $H-$modules $V$
for which $\mathfrak{m} := \mathrm{Ann}_{Z(H)}(V)$ has the property
that $H/\mathfrak{m}H$ is simple artinian, and in this case
$H/\mathfrak{m}H \cong M_{p^n|\Gamma|}(k).$ The open set of such
$\mathfrak{m}$ is precisely the Azumaya locus of (\ref{return}).
Recall that \cite[Theorem 1.7(vi)]{EG} shows that, for \emph{any}
symplectic reflection algebra over any field, at $t=0,$ the
irreducible modules of maximal dimension are $k\Gamma$-regular of
rank one. Analogously, we can describe the $k\Gamma-$structure of
the Azumaya irreducibles over Cherednik algebras for $t \neq 0$ in
positive characteristic. We begin with an easy lemma which
essentially ensures that the desired result is true for
$H_{1,\bf{0}}.$

\begin{lem} Let $k$ have characteristic $p>0$ as usual, and let $(V,\omega)$
be a symplectic $k$-vector space with basis $\{x_1, \ldots ,
x_n,y_1, \ldots ,y_n\}$, with $\omega(x_i,y_j) = \delta_{ij},\,
\omega(x_i,x_j) =\omega(y_i,y_j)=0$. Let $\Gamma$ be a finite
subgroup of $\mathrm{Sp}(V),$ of order prime to $p$. Write
$\mathfrak{h}$ for the subspace $\sum_i kx_i$ of $V$, so that
$\Gamma$ acts by automorphisms on both $S(V)$ and
$\mathcal{D}(\mathfrak{h}).$ Let $F$ be the quotient field of
$(S(V)^{\Gamma})^{\operatorname{p}}.$ Then $F
\otimes_{(S(V)^{\Gamma})^{\operatorname{p}}} \mathcal{D}(V)$ is a
free $F\Gamma$-module of rank $p^{m}$.
\end{lem}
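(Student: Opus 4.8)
The plan is to localise away from the invariant subring so that the whole statement becomes an assertion about a Galois extension of fields, and then to finish with the normal basis theorem. Throughout write $\mathcal{D} := \mathcal{D}(\mathfrak{h})$ and $Z := Z(\mathcal{D})$; by Lemma~\ref{weyl}(i), $Z = S(V)^{\operatorname{p}}$, a polynomial ring over which $\mathcal{D}$ is free of rank $p^{2n}$ (the monomials in the $x_i$ and $y_i$ of exponents $<p$). Since the Frobenius map is $\Gamma$-equivariant, $(S(V)^{\Gamma})^{\operatorname{p}} = (S(V)^{\operatorname{p}})^{\Gamma} = Z^{\Gamma}$, a central subalgebra of $\mathcal{D}$ over which $Z$ is module-finite (Noether's theorem, $Z$ being a finitely generated algebra and $\Gamma$ finite). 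First I would observe that the central localisation $Z[Z^{\Gamma}\setminus\{0\}]^{-1}$ is a domain that is finite over the field $F := Q(Z^{\Gamma})$, hence is itself a field lying between $Z$ and $Q(Z)$, and therefore equals $Q(Z)$. Inverting $Z^{\Gamma}\setminus\{0\}$ in $\mathcal{D}$ in stages then gives
$$ D \;:=\; F\otimes_{(S(V)^{\Gamma})^{\operatorname{p}}}\mathcal{D} \;=\; \mathcal{D}\otimes_Z Q(Z), $$
a free $Q(Z)$-module of rank $p^{2n}$, so $\dim_F D = |\Gamma|\,p^{2n}$: here I use that $\Gamma$ acts faithfully on $S(V)$, hence on $Z$, so by Artin's theorem $Q(Z)/F$ is Galois with group $\Gamma$, giving $[Q(Z):F] = |\Gamma|$ and $Q(Z)^{\Gamma} = F$.

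The core step is then to recognise $D$ as a module over the skew group ring $Q(Z)\ast\Gamma$ and to exploit that this ring is a matrix algebra. Indeed the Galois action of $\Gamma$ on $Q(Z)$ and its automorphism action on $\mathcal{D}$ agree on $Z$, so the diagonal action makes $D = Q(Z)\otimes_Z\mathcal{D}$ a left $Q(Z)\ast\Gamma$-module; one checks that the resulting $F\Gamma$-module structure is the same as the one inherited from the $\Gamma$-action on $\mathcal{D}$ alone, since both restrict to the automorphism action on $Z$ and both fix $F$ pointwise. Because $Q(Z)/F$ is $\Gamma$-Galois, the assignment $z\gamma \mapsto (x\mapsto z\gamma(x))$ is an isomorphism $Q(Z)\ast\Gamma \cong \End_F(Q(Z)) \cong M_{|\Gamma|}(F)$ (injective by linear independence of characters, bijective by a dimension count), so $Q(Z)\ast\Gamma$ is simple Artinian with unique simple module $Q(Z)$ carrying its tautological structure. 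Hence every $Q(Z)\ast\Gamma$-module, in particular $D$, is a direct sum of copies of $Q(Z)$; comparing $F$-dimensions forces $D \cong Q(Z)^{\oplus p^{2n}}$ as $Q(Z)\ast\Gamma$-modules, \emph{a fortiori} as $F\Gamma$-modules. Finally, by the normal basis theorem $Q(Z) \cong F\Gamma$ as $F\Gamma$-modules, so $D \cong (F\Gamma)^{\oplus p^{2n}}$ is free over $F\Gamma$ of rank $p^{2n}$; as $2n = \dim_k V$, this is the asserted $p^{m}$.

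The argument is light once the problem has been placed over the Galois extension $Q(Z)/F$ — its substance is just the standard structure theory of skew group rings of Galois field extensions together with the normal basis theorem — so I do not expect a serious obstacle. The one point that needs genuine (if routine) care rather than pure bookkeeping is the reduction in the first paragraph: verifying that inverting only the invariant subring $Z^{\Gamma}$ already inverts all of $Z$, so that $F\otimes_{(S(V)^{\Gamma})^{\operatorname{p}}}\mathcal{D}$ really is $\mathcal{D}\otimes_Z Q(Z)$, and confirming that the $\Gamma$-action carried by this localisation coincides with the Galois action on the $Q(Z)$-factor.
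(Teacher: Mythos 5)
Your argument is correct, and it reaches the conclusion by a route that differs from the paper's in its middle step. The paper disposes of the Weyl algebra at the outset: it asserts that $F\otimes_{(S(V)^{\Gamma})^{\operatorname{p}}}\mathcal{D}(\mathfrak{h})$ and $F\otimes_{(S(V)^{\Gamma})^{\operatorname{p}}}S(V)$ coincide as $F\Gamma$-modules (implicitly via the $\Gamma$-equivariant, $(S(V)^{\Gamma})^{\operatorname{p}}$-linear PBW/symmetrisation identification, which needs a word since $\Gamma$ need not preserve the splitting $V=\mathfrak{h}\oplus\mathfrak{h}^*$), identifies $F\otimes S(V)$ with $Q(S(V))$, and then quotes the normal basis theorem (miscited there as the ``Primitive Element Theorem'') together with $\dim_F Q(S(V)^{\Gamma})=p^{2n}$. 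You never transfer to $S(V)$: you use that $\mathcal{D}(\mathfrak{h})$ is free of rank $p^{2n}$ over its centre $Z\cong S(V)^{\operatorname{p}}$, localise at $Z^{\Gamma}\setminus\{0\}$ to land over the Galois extension $Q(Z)/F$, and then invoke $Q(Z)\ast\Gamma\cong\End_F(Q(Z))\cong M_{|\Gamma|}(F)$ so that the localised module is forced, by semisimplicity and a dimension count, to be $Q(Z)^{\oplus p^{2n}}$, after which the normal basis theorem finishes as in the paper. What your route buys is that it sidesteps the unproved module-transfer assertion (the one genuinely fussy point in the paper's sketch) and gives the slightly stronger statement that the localisation is a multiple of the tautological $Q(Z)\ast\Gamma$-module; what it costs is the extra input of the structure theorem for skew group rings of Galois extensions, whereas the paper's proof is a one-liner once the transfer to $S(V)$ is granted. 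Two shared housekeeping points, at the same level of rigour in both proofs: the identification $Z(\mathcal{D}(\mathfrak{h}))=S(V)^{\operatorname{p}}$ as $\Gamma$-algebras rests on $v\mapsto v^p$ being additive and $\Gamma$-equivariant (valid as $p$ is odd), and $Q(Z)^{\Gamma}=Q(Z^{\Gamma})$ requires the standard clearing-of-denominators argument beyond Artin's theorem. Your reading of the (undefined) exponent as $m=2n=\dim_k V$ is the intended one, matching the later application of the lemma.
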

\begin{proof} Note first that $(S(V)^{\Gamma})^{\operatorname{p}}$
is a central $\Gamma$-invariant subalgebra of
$\mathcal{D}(\mathfrak{h}),$ so the statement of the lemma makes
sense. Moreover, as $F\Gamma$-modules, there is no difference
between $F \otimes_{(S(V)^{\Gamma})^{\operatorname{p}}}
\mathcal{D}(\mathfrak{h})$ and $F
\otimes_{(S(V)^{\Gamma})^{\operatorname{p}}} S(V),$ so we can work
with the latter. Then $F \otimes S(V)$ is simply the quotient field
of $S(V)$, which is a free $Q(S(V)^{\Gamma})\Gamma$-module of rank
one by the Primitive Element Theorem of Galois theory. Since
$\mathrm{dim}_F(Q(S(V)^{\Gamma})) = p^m,$ the result follows.
\end{proof}

\begin{thrm} Let $H$ be as in Theorem \ref{PIdeg}, and let the maximal
 ideal $\mathfrak{m}$ of $Z = Z(H)$ be in the Azumaya locus of $H$.
 \begin{enumerate} \item $He/\mathfrak{m}He$ is a regular $k\Gamma -$module of rank $p^{2n}.$
 \item The irreducible $H-$ module $V$ with $\mathrm{Ann}_Z(V) = \mathfrak{m}$ is a
regular $k\Gamma-$module of rank $p^n$.
\end{enumerate}
\end{thrm}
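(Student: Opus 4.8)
The plan is to deduce both parts from the Dunkl embedding of Theorem \ref{Dunkl} and the Morita picture, reducing the $k\Gamma$-module structure question to the already-understood skew group algebra $\mathcal{D}(\mathfrak{h})\ast\Gamma$, where Lemma above (the one preceding this theorem) does the work. First I would observe that, since $\mathfrak{m}$ lies in the Azumaya locus, Theorem \ref{return}(i)$\Leftrightarrow$(ii) gives $H_{\mathfrak{p}}\cong M_s(eH_{\mathfrak{p}}e)$ with $eH_{\mathfrak{p}}e$ local with Jacobson radical $e\mathfrak{p}H_{\mathfrak{p}}e$; combined with Theorem \ref{goldie}(ii) we have $s=|\Gamma|$. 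Hence $He/\mathfrak{m}He$ is a direct sum of $|\Gamma|$ copies of the simple module $V$, so part (i) and part (ii) are equivalent once we know the $k\Gamma$-rank of either one: if $He/\mathfrak{m}He$ is regular of rank $p^{2n}$, then $V$, being a $1/|\Gamma|$-th ``column'' summand, is regular of rank $p^{n}$ — here I would use that $He/\mathfrak{m}He\cong V^{\,|\Gamma|}$ as $H$-modules, hence as $k\Gamma$-modules, and that the regular $k\Gamma$-module of rank $p^{2n}$ is $(k\Gamma)^{p^{2n}}$, which splits as $|\Gamma|$ copies of $(k\Gamma)^{p^{n}}$. So it suffices to prove (i).

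For (i), the key point is that the $k\Gamma$-isomorphism type of $He/\mathfrak{m}He$ can be computed after a localization at central elements, because $\mathfrak{m}$ (equivalently $\mathfrak{p}=\mathfrak{m}\cap Z$) avoids $\delta$: indeed $\delta\in S(\mathfrak{h}^*)^{\Gamma}$, and one checks $\delta^{p}\in Z_0\subseteq Z(H)$, and if $\mathfrak{p}$ contained $\delta^p$ then $H/\mathfrak{m}H$ could not be Azumaya of the maximal dimension $p^n|\Gamma|$ since on the locus $\delta=0$ the fibre algebra degenerates (its $Q(Z_0)$-dimension drops). More robustly, I would argue that the Azumaya locus of $H$ is contained in the Azumaya locus of $H[\delta^{-1}]\cong\mathcal{D}(\mathfrak{h}^{\mathrm{reg}})\ast\Gamma$, so $\mathfrak{p}$ survives in $Z(H[\delta^{-1}])$, and $He/\mathfrak{m}He\cong (H[\delta^{-1}])e/\mathfrak{m}(H[\delta^{-1}])e$ as $k\Gamma$-modules. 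Now $H[\delta^{-1}]\cong\mathcal{D}(\mathfrak{h}^{\mathrm{reg}})\ast\Gamma$ with $e$ the symmetrising idempotent, and $(\mathcal{D}(\mathfrak{h}^{\mathrm{reg}})\ast\Gamma)e\cong\mathcal{D}(\mathfrak{h}^{\mathrm{reg}})$ as a $(\mathcal{D}(\mathfrak{h}^{\mathrm{reg}}),\mathcal{D}(\mathfrak{h}^{\mathrm{reg}})^{\Gamma})$-bimodule, carrying its natural $\Gamma$-action; so $He/\mathfrak{m}He\cong \mathcal{D}(\mathfrak{h}^{\mathrm{reg}})/\mathfrak{m}\mathcal{D}(\mathfrak{h}^{\mathrm{reg}})$ as $k\Gamma$-modules, where $\mathfrak{m}$ is viewed inside $Z(\mathcal{D}(\mathfrak{h}^{\mathrm{reg}})\ast\Gamma)=(Z(\mathcal{D}(\mathfrak{h}^{\mathrm{reg}})))^{\Gamma}$ by Lemma \ref{weyl}(ii).

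At this stage the preceding Lemma finishes it: taking $F$ to be the residue field $Z_0'/\mathfrak{m}_0$ where $\mathfrak{m}_0=\mathfrak{p}\cap(S(V)^{\Gamma})^{\operatorname{p}}$ — or rather its completion/localization so that the Lemma's statement over the quotient field of $(S(V)^{\Gamma})^{\operatorname p}$ can be specialized to the closed point — one gets that $\mathcal{D}(\mathfrak{h}^{\mathrm{reg}})$, after base change, is a free $F\Gamma$-module of rank $p^{2n}$ (the Lemma gives rank $p^m$ with, in the present notation, $m=2n$ since $\dim V=2n$ and the relevant $[Q(S(V)):Q(S(V)^\Gamma)^{\operatorname p}]=p^{2n}$, using that $S(V)$ is free of rank $p^{2n}|\Gamma|$ over $Z_0$). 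Pushing down to the residue field at $\mathfrak{m}$ — which is legitimate because $He$ is a projective, hence flat and (by the Azumaya hypothesis and the Morita context) suitably ``constant-rank'' module over the relevant central localization — yields $He/\mathfrak{m}He\cong(k\Gamma)^{p^{2n}}$, i.e.\ the regular $k\Gamma$-module of rank $p^{2n}$.

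The step I expect to be the main obstacle is the specialization from the generic (quotient-field) statement of the Lemma to the closed point $\mathfrak{m}$: a priori the $F\Gamma$-module structure could jump at special fibres, and one must use the Azumaya hypothesis in an essential way to rule this out. Concretely, I would argue that $He$ is a finitely generated projective module over $H$, hence over any central localization, so $He/\mathfrak{m}He$ has dimension exactly $\mathrm{rk}_{Z_0}(He)/[\,\cdots\,]$ forced to be $p^{2n}|\Gamma|$ by (\ref{free}) and the Azumaya dimension count; since it is a $k\Gamma$-module of that dimension which is generically $(k\Gamma)^{p^{2n}}$ and which, by semisimplicity of $k\Gamma$ (as $p\nmid|\Gamma|$), is determined up to isomorphism by its character, and the character is constant on the connected (indeed irreducible, $\mathrm{Maxspec}Z_0$ being affine space) Azumaya locus by a standard upper-semicontinuity-plus-constant-dimension argument, it must be $(k\Gamma)^{p^{2n}}$ at $\mathfrak{m}$ as well. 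Once this continuity-of-$k\Gamma$-character point is nailed down, both (i) and (ii) follow as above.
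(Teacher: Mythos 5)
Your primary route to (i) — identifying $He/\mathfrak{m}He$ with the corresponding fibre of $H[\delta^{-1}]e \cong \mathcal{D}(\mathfrak{h}^{\mathrm{reg}})\ast\Gamma e$ — rests on the claim that an Azumaya maximal ideal avoids $\delta$, and that claim is false. Your first justification cannot work: $H$ is free over $Z_0$ by (\ref{free}), so the fibre dimension over $Z_0$ is constant and nothing ``degenerates'' on $\{\delta=0\}$. And the Azumaya locus genuinely meets the locus where $\delta$ acts nilpotently: already in the rank-one Kleinian example of \S\ref{invar} the centre is $k[X,Y,T]/(XY-f(T))$ with $X=x^{pr}$, $Y=y^{pr}$, whose singular locus lies in $\{X=Y=0\}$; so a point with $X\neq 0=Y$ is smooth, hence Azumaya by Theorem \ref{Azumaya}, yet there $y$, and hence $\delta$ (a power of $y$ up to scalar), acts nilpotently on $H/\mathfrak{m}H$, so $\mathfrak{m}$ does not survive in $H[\delta^{-1}]$. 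Thus the Dunkl embedding can only be used \emph{generically}: it identifies $Q(Z)\otimes_Z He$ with a $Q(Z)\Gamma$-module which the Lemma of \S\ref{regular} shows is regular of rank $p^{2n}$. The entire content of (i) is then the specialization step which you yourself flag as ``the main obstacle''; your sketch of it (character constancy on the connected Azumaya locus) is not carried out, and the way to nail it — which is how the paper argues — is: the $k\Gamma$-isotypic components $\mathrm{Isot}_E(He)$ are $Z$-submodules of $He$ since $Z$ centralizes $k\Gamma$; at an Azumaya $\mathfrak{m}$ the module $H_{\mathfrak{m}}e$ is $Z_{\mathfrak{m}}$-free of rank $p^{2n}|\Gamma|$ (Theorems \ref{return}(ii), \ref{goldie}(ii) and \ref{PIdeg}), so each localized isotypic component is a direct summand, hence free, of rank equal to its generic rank $p^{2n}(\dim_k E)^2$; reducing modulo $\mathfrak{m}$ gives multiplicity $p^{2n}\dim_k E$ for every $E$, which is (i). Freeness at $\mathfrak{m}$ is what rules out jumping — connectedness of the Azumaya locus and inversion of $\delta$ play no role.

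There is also a numerical error in your reduction of (ii) to (i): $He/\mathfrak{m}He$ is the direct sum of $p^{n}$, not $|\Gamma|$, copies of $V$, since $\dim_k(He/\mathfrak{m}He)=p^{2n}|\Gamma|$ while $\dim_k V=p^{n}|\Gamma|$ by Theorem \ref{PIdeg}; correspondingly $(k\Gamma)^{p^{2n}}$ is $p^{n}$ copies of $(k\Gamma)^{p^{n}}$. As written, your two assertions together would force $|\Gamma|=p^{n}$, impossible since $p\nmid|\Gamma|$; the freeness of $H_{\mathfrak{p}}e$ of rank $|\Gamma|$ over $eH_{\mathfrak{p}}e$ does not mean that the closed fibre contains $|\Gamma|$ copies of $V$. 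With the corrected count the passage from (i) to (ii) is exactly the paper's.
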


\begin{proof} (i) By Theorems \ref{return}(ii) and \ref{goldie}(ii), for every
prime $\mathfrak{p}$ of $Z$ in the Azumaya locus of $H$,
\begin{eqnarray} H_{\mathfrak{p}} \cong
M_{|\Gamma|}(eH_{\mathfrak{p}}e).\label{matty} \end{eqnarray} Set
$Q:= Q(Z),$ the quotient field of $Z$, so by Theorem \ref{PIdeg} we
have
$$ \mathrm{dim}_Q(Q \otimes_Z H) = p^{2n}|\Gamma|^2. $$
Thus (\ref{matty}) implies that
\begin{eqnarray} \mathrm{dim}_Q(Q \otimes_Z He) = p^{2n}|\Gamma|.
\label{Qdim} \end{eqnarray} Now let $\mathfrak{m}$ be a maximal
ideal of $Z$ in the Azumaya locus of $H$. The Azumaya property
ensures that $H_{\mathfrak{m}}$, and hence also $H_{\mathfrak{m}}e,$
are projective and thus free $Z_{\mathfrak{m}}-$modules. In
particular, from (\ref{Qdim}),
\begin{eqnarray} H_{\mathfrak{m}}e \textit{ is }
Z_{\mathfrak{m}}\textit{-free of rank } p^{2n}|\Gamma|. \label{free}
\end{eqnarray} Now let $\mathrm{Irr}(k\Gamma)$ be the set of
isomorphism classes of irreducible $k\Gamma -$modules. We
decompose $He$ as the direct sum of its isotypic components as
left $k\Gamma-$module:
\begin{eqnarray} He = \bigoplus_{E \in
\mathrm{Irr}(k\Gamma)}\mathrm{Isot}_E(He). \label{decompo}
\end{eqnarray}
Of course this is a sum of $Z-$modules as well as
$k\Gamma-$modules, so applying $Q \otimes_Z -$ to (\ref{decompo})
yields $$ Q \otimes_Z He = \bigoplus_{E \in
\mathrm{Irr}(k\Gamma)}(Q \otimes_Z \mathrm{Isot}_E(He)).
$$
Thanks to the Dunkl embedding, Theorem and Remark \ref{Dunkl}, $H$
is birationally equivalent to the skew group algebra
$\mathcal{D}(\mathfrak{h})\ast\Gamma,$ via a map which is the
identity when restricted to $k\Gamma$. By this and the above lemma,
\begin{eqnarray} Q \otimes_Z He \textit{ is }
Q\Gamma\textit{-regular of rank } p^{2n}. \label{birat}
\end{eqnarray}
By (\ref{free}), the localised isotypic components
$$ Z_{\mathfrak{m}} \otimes_Z \mathrm{Isot}_E(He) \cong
\mathrm{Isot}_E(H_{\mathfrak{m}}e) $$ are $Z_{\mathfrak{m}}-$free
for each $k\Gamma-$irreducible $E$; and so, in view of
(\ref{birat}), $$ Z_{\mathfrak{m}} \otimes_Z \mathrm{Isot}_E(He)
\textit{ has } Z_{\mathfrak{m}}\textit{-rank }
p^{2n}(\mathrm{dim}_k(E))^2. $$ We deduce from this that,
factoring $Z_{\mathfrak{m}}$ and the isotypic component by
$\mathfrak{m}Z_{\mathfrak{m}},$ $$
\mathrm{dim}_k(\mathrm{Isot}_E(He/\mathfrak{m}He) =
p^{2n}(\mathrm{dim}_k(E))^2. $$ That is, the multiplicity of $E$
in $He/\mathfrak{m}He$ is $p^{2n}\mathrm{dim}_k(E),$ proving (i).

(ii) By Theorem \ref{PIdeg}, $\mathrm{dim}_k(V) = p^n|\Gamma|$.
Since $V$ is the unique irreducible module for the simple artinian
ring $H/\mathfrak{m}H$, $He/\mathfrak{m}He$ is the sum of $p^n$
copies of $V$. Therefore it follows from (i) that $V$ is
$k\Gamma-$regular of rank $p^n.$
\end{proof}

\subsection{Azumaya versus smooth locus} \label{Azumaya}

\begin{thrm} Let $H=H_{1,\mathbf{c}}(\mathfrak{h},\omega,\Gamma)$ be a
rational Cherednik algebra over an algebraically closed field $k$
of positive characteristic $p.$ Let $Z$ be the centre of $H$, and
write $\mathcal{A}_H$ for the Azumaya locus of $Z$ in $H$, and
$\mathcal{S}_Z$ for the singular locus of $\mathrm{Maxspec}(Z).$
Then
$$ \mathcal{A}_H = \mathrm{Maxspec}(Z) \setminus \mathcal{S}_Z. $$
\end{thrm}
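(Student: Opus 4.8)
The plan is to prove the two inclusions $\mathcal{A}_H \subseteq \mathrm{Maxspec}(Z) \setminus \mathcal{S}_Z$ and $\mathrm{Maxspec}(Z) \setminus \mathcal{S}_Z \subseteq \mathcal{A}_H$ separately. The first inclusion is the ``easy'' direction and is essentially standard: by a theorem of Brown--Goodearl (see \cite[Theorem III.1.6]{BGb}), whenever a prime affine $k$-algebra $H$, module-finite over its centre $Z$, is Auslander-regular and Cohen-Macaulay (which $H$ is, by Theorems \ref{prop}(iii) and \ref{centre}), the Azumaya locus is contained in the smooth locus of $\mathrm{Maxspec}(Z)$. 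So I would simply invoke that result, noting that all its hypotheses have already been verified for $H$ earlier in the paper.

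The substantive direction is $\mathrm{Maxspec}(Z) \setminus \mathcal{S}_Z \subseteq \mathcal{A}_H$: a smooth point $\mathfrak{m}$ of $\mathrm{Maxspec}(Z)$ must lie in the Azumaya locus. The strategy is to use the free module structure (\ref{free}): $H$ is free of rank $p^{2n}|\Gamma|^3$ over the polynomial subalgebra $Z_0$, so $H$ is a finitely generated projective (indeed free) $Z_0$-module, and one can work over the smooth base $\mathrm{Maxspec}(Z_0) = \mathbb{A}^{2n}$. Since $Z$ is free over $Z_0$ of rank $|\Gamma|$ by the Corollary in \S\ref{PIdeg}, and $Z$ is Cohen-Macaulay, the key point is to compare local rings. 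At a smooth maximal ideal $\mathfrak{m}$ of $Z$, $Z_{\mathfrak{m}}$ is regular of Krull dimension $2n$; pulling down to $\mathfrak{q} := \mathfrak{m} \cap Z_0$, the extension $Z_{\mathfrak{m}}/(Z_0)_{\mathfrak{q}}$-structure together with the regularity of both rings forces $Z_{\mathfrak{m}}$ to be étale-like over $(Z_0)_{\mathfrak{q}}$ in the relevant sense. I would then use the discriminant/trace-form criterion: $H_{\mathfrak{m}}$ is Azumaya over $Z_{\mathfrak{m}}$ iff the natural trace pairing on $H_{\mathfrak{m}}$ is nondegenerate modulo $\mathfrak{m}$, equivalently iff $H/\mathfrak{m}H$ is a central simple $k$-algebra of the generic dimension $(p^n|\Gamma|)^2$. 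One establishes this by a dimension count: $\dim_k H/\mathfrak{m}H = p^{2n}|\Gamma|^2$ (from freeness over $Z$ and $\dim_k Z/\mathfrak{m} = 1$), which equals $(p^n|\Gamma|)^2 = \mathrm{PI\text{-}degree}(H)^2$ by Theorem \ref{PIdeg}; since the PI-degree is the maximal $k$-dimension of an irreducible, $H/\mathfrak{m}H$ is simple artinian exactly when it is all of $M_{p^n|\Gamma|}(k)$, which happens precisely on the Azumaya locus.

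To make the implication ``$\mathfrak{m}$ smooth $\Rightarrow$ $H/\mathfrak{m}H$ simple'' actually go through, I expect to argue via the Morita context of \S\ref{Morita} and \S\ref{return} together with the structure of $eHe$. The idea is: $H_{\mathfrak{m}}$ is $Z_{\mathfrak{m}}$-free by (\ref{free}) in \S\ref{regular}, and $eH_{\mathfrak{m}}e$ is a maximal order (Theorem \ref{base}(v)) which is a reflexive, hence — over the regular local ring $Z_{\mathfrak{m}}$ — a Cohen-Macaulay module; a maximal order over a regular local ring of the appropriate dimension which is module-finite and ``unramified in codimension one'' is in fact Azumaya, by a purity-of-the-branch-locus type argument (Auslander--Goldman). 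One transfers this back to $H_{\mathfrak{m}}$ via the Morita equivalence of Corollary \ref{Morita}B, available since smooth points of codimension $\ge 2$ still satisfy $HeH \cap Z \nsubseteq \mathfrak{m}$ by Lemma \ref{Morita} (height-$1$ primes already lie in the Azumaya locus, and one reduces the general smooth case to codimension-one behaviour by the Cohen-Macaulay/reflexivity properties).

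The main obstacle I anticipate is precisely the last step: controlling the ramification of the maximal order $eHe$ (equivalently of $H$) over its centre at a smooth point. The clean tool would be that $H$, being Auslander-regular, Cohen-Macaulay, and a maximal order, is a \emph{homologically homogeneous} ring over $Z$ on the smooth locus, so that the Azumaya and smooth loci coincide by the general theory of such rings (this is the mechanism behind \cite[Theorem III.1.6]{BGb} and its converse in the homologically homogeneous case). Thus the real content to be checked is that $H$ is homologically homogeneous, or that $\mathrm{gldim}(H_{\mathfrak{m}}) < \infty$ forces the Azumaya property — which is one of the stated equivalences in the theorem's formulation. I would therefore close the argument by quoting the converse direction of the Brown--Goodearl dichotomy for module-finite Auslander-regular Cohen-Macaulay algebras whose singular locus of the centre has codimension $\ge 2$ (guaranteed here by Lemma \ref{Morita}, Theorem \ref{centre}(iv), and $Z$ being a normal — indeed Gorenstein — domain), completing the proof that the Azumaya locus is exactly the smooth locus.
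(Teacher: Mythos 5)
Your first direction (Azumaya $\subseteq$ smooth) and your opening move for the converse -- invoking \cite[Theorem 3.8]{BGood} for a prime, module-finite, Auslander-regular, Cohen--Macaulay algebra to reduce everything to showing that $H$ is Azumaya in codimension one -- are exactly the paper's frame. But the proposal never actually proves the codimension-one Azumaya statement, and this is the entire substantive content of the theorem. The places where you claim it are unsupported: the Lemma in \S\ref{Morita} only shows that $HeH \cap Z$ is contained in no height-one prime of $Z$, whence $H_{\mathfrak{p}}$ is \emph{Morita equivalent} to $eH_{\mathfrak{p}}e$ for height-one $\mathfrak{p}$ (Corollary B); it does not put such $\mathfrak{p}$ in the Azumaya locus. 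Indeed the Remark after Theorem \ref{return} states explicitly that the implication from localizability/Morita data (conditions (iii)--(vi)) back to the Azumaya property (i) is precisely what the authors cannot prove, because the relation of the maximal order $eHe$ to its centre is not understood; so ``height-$1$ primes already lie in the Azumaya locus'' cannot be extracted from that material. Your other two suggested mechanisms fail or are circular for the same reason: the dimension count shows only that $\dim_k H/\mathfrak{m}H = p^{2n}|\Gamma|^2$ at a smooth $\mathfrak{m}$ (freeness of $H_{\mathfrak{m}}$ over the regular local ring $Z_{\mathfrak{m}}$), which does not force $H/\mathfrak{m}H$ to be simple artinian rather than an algebra of that dimension with radical; and the Auslander--Goldman/homological-homogeneity route needs exactly the unramifiedness (equivalently Azumaya-ness) in codimension one that is at issue -- as you yourself note, it is ``one of the stated equivalences'' being proved.

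The paper fills this gap with a concrete representation-theoretic argument that is absent from your proposal. Given a height-one prime $\mathfrak{p}$ of $Z$, set $\mathfrak{p}_0 = \mathfrak{p}\cap Z_0$; since $\mathfrak{p}_0$ has height one in the tensor product $Z_0=(S(\mathfrak{h})^{\operatorname{p}})^{\Gamma}\otimes(S(\mathfrak{h}^*)^{\operatorname{p}})^{\Gamma}$, it meets one of the two factors trivially, so $\mathfrak{p}$, and hence some maximal ideal $\mathfrak{m}\supseteq\mathfrak{p}$, avoids the element $\delta$ of \S\ref{Dunkl} (or its counterpart for $\mathfrak{h}^*$). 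For an irreducible $W$ killed by such an $\mathfrak{m}$, $\delta$ acts invertibly, so via the Dunkl isomorphism $H[\delta^{-1}]\cong\mathcal{D}(\mathfrak{h}^{\mathrm{reg}})\ast\Gamma$ the module $W$ carries actions of $\mathcal{D}(\mathfrak{h}^{\mathrm{reg}})$ and of $S(\mathfrak{h}^*)[\delta^{-1}]\ast\Gamma$; the first gives $p^n\mid\dim_k W$ (Lemma \ref{weyl}(i)), and an orbit argument with the $|\Gamma|$ distinct $\Gamma$-conjugates of a maximal ideal of $S(\mathfrak{h}^*)[\delta^{-1}]$ gives $|\Gamma|\mid\dim_k W$. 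Since $p\nmid|\Gamma|$, $\dim_k W=p^n|\Gamma|$, the PI-degree, so $\mathfrak{m}$ (hence $\mathfrak{p}$) is Azumaya, and then \cite[Theorem 3.8]{BGood} closes the proof. Without some argument of this kind your proof does not go through.
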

\begin{proof} Since $H$ is a finite $Z$-module and is
Auslander-regular and Cohen-Macaulay, by Theorems \ref{centre}(ii)
and \ref{prop}(iii), it follows from \cite[Theorem 3.8]{BGood} that
it's enough to prove that $H$ is Azumaya in codimension one. That
is, let $\mathfrak{p}$ be a prime ideal of $Z$ of height one. We
must show that $H_{\mathfrak{p}}$ is Azumaya; equivalently we must
exhibit a maximal ideal $\mathfrak{m}$ of $Z$ with $\mathfrak{m}$
Azumaya and $\mathfrak{p} \subseteq \mathfrak{m}.$ Let $Z_0$ be the
polynomial subalgebra $(S(\mathfrak{h})^{\operatorname{p}})^{\Gamma}
\otimes (S(\mathfrak{h}^*)^{\operatorname{p}})^{\Gamma}$ of $Z$
provided by Proposition \ref{invar}. Thus $\mathfrak{p}_0 :=
\mathfrak{p} \cap Z_0$ is a prime ideal of $Z_0$, and
\begin{eqnarray} \mathrm{height}(\mathfrak{p}_0) = 1, \label{height}
\end{eqnarray} by Lying Over, \cite[Proposition 4.15]{Eis}. We claim
that
\begin{eqnarray} \textit{either } \mathfrak{p}_0 \cap
(S(\mathfrak{h})^{\operatorname{p}})^{\Gamma} = 0 \textit{ or }
\mathfrak{p}_0 \cap (S(\mathfrak{h}^*)^{\operatorname{p}})^{\Gamma}
= 0. \label{either}
\end{eqnarray}
For, suppose for a contradiction that both intersections are
non-zero. Then $$\mathfrak{q} := (\mathfrak{p}_0 \cap
(S(\mathfrak{h})^{\operatorname{p}})^{\Gamma})Z_0 =
(\mathfrak{p}_0 \cap
(S(\mathfrak{h})^{\operatorname{p}})^{\Gamma})\otimes
(S(\mathfrak{h}^*)^{\operatorname{p}})^{\Gamma}$$ is a non-zero
prime of $Z_0$ contained in $\mathfrak{p}_0,$ and clearly
$\mathfrak{q} \cap (S(\mathfrak{h}^*)^{\operatorname{p}})^{\Gamma}
= 0,$ so that $\mathfrak{q} \subsetneqq \mathfrak{p}_0.$ But this
contradicts (\ref{height}), and hence (\ref{either}) is true.

Let's suppose first that $\mathfrak{p}_0 \cap
(S(\mathfrak{h}^*)^{\operatorname{p}})^{\Gamma} = 0 .$ Then, in
particular, $\mathfrak{p}$ does \emph{not} contain the element
$\delta$ defined in (\ref{Dunkl}), and hence there is a maximal
ideal $\mathfrak{m}$ of $Z$ with $\mathfrak{p} \subseteq
\mathfrak{m}$, such that $\mathfrak{m}$ does not contain $\delta.$
We claim that $\mathfrak{m}$ is Azumaya; in view of \cite[Theorem
III.1.6]{BGb} and Theorem \ref{PIdeg}, this amounts to showing that,
if $W$ denotes an irreducible $H-$module killed by $\mathfrak{m},$
then
\begin{eqnarray} \textrm{dim}_k(W) = p^n|\Gamma|. \label{big}
\end{eqnarray}
Now $\delta$ acts as multiplication by a non-zero scalar on $W$;
so, since $H[\delta]^{-1} \cong
\mathcal{D}(\mathfrak{h}^{\mathrm{reg}})\ast \Gamma$ by Theorem
and Remark \ref{Dunkl}, $W$ admits actions of (i)
$\mathcal{D}(\mathfrak{h}^\textrm{reg}),$ and of (ii)
$S(\mathfrak{h}^*)[\delta]^{-1}\ast \Gamma$.

From (i) and Lemma \ref{PIdeg}(i) we deduce that \begin{eqnarray}
p^n\mid \textrm{dim}_k(W). \label{pdiv}\end{eqnarray} Let $U$ be
any irreducible $S(\mathfrak{h}^*)[\delta]^{-1}\ast
\Gamma-$module, so $\mathrm{dim}_k(U) < \infty$ and so there is a
maximal ideal $\mathfrak{t}$ of $S(\mathfrak{h}^*)[\delta]^{-1}$
and $0 \neq u \in U$ with $\mathfrak{t}u = 0.$ Set $U_1 :=
\mathrm{Ann}_U(\mathfrak{t}),$ a non-zero
$S(\mathfrak{h}^*)[\delta]^{-1}-$submodule of $U$. For each
$\gamma \in \Gamma$, $\gamma U_1 =
\mathrm{Ann}_U(\mathfrak{t}^{\gamma})$ is isomorphic as a vector
space to $U_1$. Now $\mathfrak{t}$ has $|\Gamma|$ distinct
$\Gamma-$conjugates, by definition of $\delta$. Consider $U' :=
\sum_{\gamma \in \Gamma} \gamma U_1 \subseteq U.$ Clearly $U'$ is
a non-zero $S(\mathfrak{h}^*)[\delta]^{-1}\ast \Gamma-$submodule
of $U$, and therefore $U' = U$. Moreover the sum in the definition
of $U'$ has $|\Gamma|$ distinct terms, each term killed by a
distinct maximal ideal. So the sum is direct, and hence
$\mathrm{dim}_k(U) = |\Gamma|\mathrm{dim}_k(U_1).$ In particular,
$|\Gamma|\mid \mathrm{dim}_k(U)$; since $W$ has a finite
composition series as $S(\mathfrak{h}^*)[\delta]^{-1}\ast
\Gamma-$module,
\begin{eqnarray} |\Gamma| \mid \mathrm{dim}_k(W). \label{Gdiv}
\end{eqnarray}
Combining (\ref{pdiv}) and (\ref{Gdiv}), recalling that $p \nmid
|\Gamma|$ by hypothesis, proves (\ref{big}), and so the theorem
follows.
\end{proof}

\section{Questions and conjectures} \label{qn} Throughout, $H =
H_{1,\mathbf{c}}(V,\omega,\Gamma)$ is a symplectic reflection
algebra over $k$, which is algebraically closed of characteristic
$p > 0.$ Let $V$ have dimension $2n.$

We repeat the question about Goldie ranks which was stated, with
background discussion, in Remark \ref{goldie}:

\noindent {\bf Question A:} Does $H$ have Goldie dimension
$|\Gamma|$?\footnote{We understand that Iain Gordon has been able to
confirm this.}
\medskip

Similarly, it seems reasonable to expect that the value for the
PI-degree of Cherednik algebras obtained in (\ref{PIdeg}) applies
in general:

\noindent {\bf Question B:} Does $H$ have PI-degree $p^n|\Gamma|$?
\medskip

A more precise version of the above question is:

\noindent {\bf Question C:} Is every simple $H$-module of maximal
dimension a regular $k\Gamma -$module of rank $p^n$?
\medskip

It is of interest from the perspective of noncommutative resolutions
of singularities to ask:

\noindent {\bf Question D:} For which $H$ do there exist values of
the parameter $\mathbf{c}$ for which $\mathrm{Maxspec}(Z(H))$ is
smooth? When such values exist determine them all.
\medskip

The analogue of the first part of Question D in characteristic 0 at
$t=0$ has been answered completely, as a result of a considerable
body of work - see \cite{EG,G,Mar,Bel}. A natural strategy to attack
this problem in the Cherednik case is afforded by Theorem
\ref{Azumaya}. To have this route available in the setting of an
arbitrary symplectic reflection algebra, one needs therefore to
answer:

\noindent {\bf Question E:} Does the Azumaya locus coincide with
the smooth locus for an arbitrary symplectic reflection algebra?
\medskip

Work on the finite dimensional representation theory in
characteristic 0 is considerably helped by the underlying Poisson
structure - in view of \cite[Theorems 4.2 and 7.8]{BG}, there are
only finitely many symplectic leaves in $\mathrm{Maxspec}(Z(H))$,
and the representation theory is constant across leaves, in the
sense that, if $\mathfrak{m}$ and $\mathfrak{n}$ belong to the
same leaf, then $H/\mathfrak{m}H \cong H/\mathfrak{n}H.$ This
motivates:

\noindent {\bf Question F:} Are there only finitely many
isomorphism classes of factors $H/\mathfrak{m}H$ as $\mathfrak{m}$
ranges through $\mathrm{Maxspec}(Z(H))$?
\medskip

The annoying gap in the equivalences of Theorem \ref{return} is
one indication that the symmetrising subalgebra is not very well
understood. We therefore ask:

\noindent {\bf Question G:} Is every localizable prime ideal $P$
of $H$ generated by its intersection with $Z(H)$?


\begin{thebibliography}{999}


\bibitem{Bel} G. Bellamy,
\newblock On singular Calegoro-Moser spaces,
\newblock  arXiv math RT/0707.3694


\bibitem{Be}
D.J. Benson,
\newblock {\em Polynomial Invariants of Finite Groups,}
\newblock  London Math. Soc. Lecture Notes 190, Cambridge, 1993.

\bibitem{BFG} R. Bezrukavnikov, M. Finkelberg and V. Ginzburg, with
an Appendix by P. Etingof,
\newblock Cherednik algebras and Hilbert schemes in characteristic
$p,$
\newblock  {\it Represent. Theory} {\bf 10} (2006), 254–298.



\bibitem{Bj}
J.-E. Bjork,
\newblock The Auslander condition in Noetherian rings,
\newblock  {\em S$\acute{e}$minaire d'Alg$\grave{e}$bre Paul Dubreil et Marie-Paule Malliavin,
} Lecture Notes in Math., 1404, 137-173, Springer, (Berlin), 1989.

\bibitem{BGb}
K.A. Brown and K.R. Goodearl,
\newblock \emph{Lectures on Algebraic Quantum Groups},
\newblock  Advanced Courses in Mathematics, CRM Barcelona,
Birkh\"{a}user Verlag, Basel, 2002.

\bibitem{BGood}
K.A. Brown and K.R. Goodearl,
\newblock Homological aspects of noetherian PI Hopf algebras and irreducible modules of maximal dimension,
\newblock  \emph{J. Algebra} 198 (1997), 240-265.

\bibitem{BG}
K.A. Brown and I. Gordon,
\newblock Poisson orders, symplectic reflection algebras and representation theory,
\newblock  {\em J. reine angew. Math.} 559 (2003), 193-216.

\bibitem{Noi}
K. Changtong,
\newblock \emph{Symplectic Reflection Algebras in Positive Characteristic},
\newblock  Ph.D. thesis, University of Glasgow, August 2006.

\bibitem{Co}
S.C. Coutinho,
\newblock \emph{A Primer of Algebraic $D-$modules},
\newblock London Math. Soc. Student Texts 33, CUP, 1995.



\bibitem{CB}
W. Crawley-Boevey and M.P. Holland,
\newblock Noncommutative deformations of Kleinian singularities,
\newblock  {\em Duke Math. J.} 92 (1998), 605-635.

\bibitem{Eis}
D. Eisenbud,
\newblock \emph{Commutative Algebra with a view towards Algebraic
Geometry}
\newblock Springer Verlag (Berlin), 1995.

\bibitem{EG}
P. Etingof and V. Ginzburg,
\newblock Symplectic reflection algebras, Calogero-Moser space, and deformed Harish-Chandra homomorphism,
\newblock  {\em Invent.Math.} 147 (2002), 243-348.

\bibitem{GW}
K. R. Goodearl and R. W. Warfield Jr., \emph{An Introduction to
Noncommutative Noetherian Rings}, Second edition, London
Mathematical Society Student Texts, 61, Cambridge University Press,
Cambridge, 2004.



\bibitem{G}
I. Gordon,
\newblock Baby Verma modules for rational Cherednik algebras,
\newblock \emph{Bull> London Math. Soc.} 35 (2003), 321-336.

\bibitem{KSX}
S. Koenig, I.H. Slungard and C. Xi, \newblock Double centralizer
properties, dominant dimension, and tilting modules, \newblock {\it
J. Algebra} {\bf 240} (2001), 393-412.

\bibitem{KL}
G. Krause and T.H. Lenagan,
\newblock \emph{Growth of Algebras and Gel'fand-Kirillov Dimension},
\newblock Graduate Studies in Mathematics vol. 22, Amer. Math. Soc., Providence, R.I., 2000.


\bibitem{L}
F. Latour,
\newblock Representations of rational Cherednik algebras of rank 1
in positive characteristic,
\newblock {\it J. Pure Appl. Algebra} {\bf 195} (2005), 97–112.

\bibitem{LiVO}
Li Huishi and F. Van Oystaeyen,
\newblock {\em Zariskian Filtrations,}
\newblock  Kluwer, Dordrecht, 1996.

\bibitem{Ma}
R. Martin,
\newblock Skew group rings and maximal orders,
\newblock \emph{Glasgow Math. J.} 37 (1995), 249-263.

\bibitem{Mar}
M. Martino, Stratifications of Marsden-Weinstein reductions for
representations of quivers and deformations of symplectic quotient
singularities, arXiv math/0603562.

\bibitem{MaRa}
G. Maury and J. Raynaud,
\newblock \emph{Ordres Maximaux au sens d'Asano},
\newblock Springer Lecture Notes in Math., vol 808, Springer (Berlin), 1980.


\bibitem{MR}
J.C. McConnell and J.C. Robson,
\newblock \emph{Noncommutative Noetherian Rings},
\newblock Academic Press (New York), 1987.


\bibitem{Mu}
B.J.M\"{u}ller,
\newblock Localisation in non-commutative Noetherian rings,
\newblock  {\em Can. J. Math.} 28 (1976), 600-610.


\bibitem{Pass}
D.S. Passman,
\newblock \emph{Infinite Crossed Products},
\newblock Academic Press (New York) 1989.

\bibitem{Rev}
P. Revoy,
\newblock Alg$\grave{e}$bres de Weyl en characteristiques $p$,
\newblock \emph{C.R. Acad. Sci. Paris (A),} 276 (1973), 225-228.


\bibitem{Wa}
K. Watanabe,
\newblock  Certain invariant rings are Gorenstein, I, II,
\newblock  {\em Osaka J. Math.,} 11(1974), 1-8 and 379-388.

\bibitem{Yi}
Z. Yi,
\newblock Injective homogeneity and the Auslander-Gorenstein property,
\newblock {\em Glasgow Math. J.} 37 (1995), 191--204.


\bibitem{Za}
A. Zaks, \newblock Injective dimension of semiprimary rings,
\newblock {\it J. Algebra} {\bf 13} (1969), 73-86.


\end{thebibliography}
\end{document}